\newtheorem{theorem}[equation]{Theorem}
\newtheorem{lemma}[equation]{Lemma}
\newtheorem{proposition}[equation]{Proposition}
\newtheorem{corollary}[equation]{Corollary}
\newtheorem{conjecture}[equation]{Conjecture}
\theoremstyle{remark}
\newtheorem{remark}[equation]{Remark}
\numberwithin{equation}{subsection}
\newcommand{\FF}{\mathbb{F}}
\newcommand{\ZZ}{\mathbb{Z}}
\newcommand{\QQ}{\mathbb{Q}}
\newcommand{\RR}{\mathbb{R}}
\newcommand{\TT}{\mathbb{T}}
\newcommand{\GG}{\mathbb{G}}
\newcommand{\CC}{\mathbb{C}}
\newcommand{\NN}{\mathbb{N}}
\newcommand{\bu}{\mathbf{u}}
\newcommand{\bv}{\mathbf{v}}
\newcommand{\bC}{\mathbf{C}}
\newcommand{\bz}{\mathbf{z}}
\newcommand{\cL}{\mathcal{L}}
\newcommand{\cF}{\mathcal{F}}
\newcommand{\cV}{\mathcal{V}}
\DeclareMathAlphabet{\matheur}{U}{eur}{m}{n}
\newcommand{\fs}{\mathfrak{s}}
 \DeclareMathOperator{\Lie}{Lie}
\DeclareMathOperator{\Ker}{Ker} \DeclareMathOperator{\GL}{GL}
\DeclareMathOperator{\Mat}{Mat} 
\DeclareMathOperator{\End}{End}
 \DeclareMathOperator{\wt}{wt}
\DeclareMathOperator{\Ext}{Ext}  
\DeclareMathOperator{\rank}{rank}
\DeclareMathOperator{\SL}{SL}
\newcommand{\ok}{\overline{k}}
\newcommand{\Sp}{\mathrm{Span}}
\newcommand{\rk}{\mathrm{rank}}
\newcommand{\tr}{\mathrm{tr}}
\newcommand{\power}[2]{{#1 [\![ #2 ]\!]}}
\newcommand{\laurent}[2]{{#1 (\!( #2 )\!)}}
\begin{document}
\title[Linear relations among double zeta values]{{\large{Linear relations among double zeta values in positive characteristic}}}

\author{Chieh-Yu Chang}

\dedicatory{Dedicated to the memory of my father}

\address{Department of Mathematics, National Tsing Hua University, Hsinchu City 30042, Taiwan
  R.O.C.}

\email{cychang@math.nthu.edu.tw}

\thanks{The author was partially supported by a Golden-Jade
  fellowship of the Kenda Foundation and MOST Grant
  102-2115-M-007-013-MY5. He thanks NCTS for offering a center scientist position, which is very helpful to research.}

\subjclass[2010]{Primary  11R58, 11J93; Secondary 11G09, 11M32, 11M38}

\date{July 6, 2016}

\begin{abstract}
For each integer $n\geq 2$, we study linear relations among weight $n$ double zeta
values and the $n$th power of the Carlitz period over the rational function field $\FF_
{q}(\theta)$. We show that all the $\FF_q(\theta)$-linear relations are induced from the $\FF_{q}[t]
$-linear relations among certain explicitly constructed special points in the $n$th
tensor power of the Carlitz module. We then establish a principle of Siegel's lemma for
computing and determining the $\FF_{q}[t]$-linear relations mentioned above, and thus
obtain an effective criterion for computing the dimension of weight $n$ double zeta
values space.
\end{abstract}

\keywords{Double zeta values, $t$-motives, Carlitz tensor powers, periods, logarithms, Siegel's lemma}

\maketitle

\section{Introduction}
\subsection{Classical theory}
 Classical multiple zeta values (abbreviated as MZV's) are defined by the series: for $\fs=(s_{1},\ldots,s_{r})\in \NN^{r}$ with $s_{1} \geq2$,
\[\zeta(\fs):=\sum_{n_{1}>\cdots>n_{r}\geq
  1} \frac{1}{n_{1}^{s_{1}}\cdots n_{r}^{s_{r}} } \in \RR^{\times }.\]
Here ${\rm{wt}}(\fs):=\sum_{i=1}^{r}s_{i}$ is called the weight and $r$ is called the depth of the presentation $\zeta_{A}(\fs)$. MZV's have many connections with various research topics. For example, see~\cite{Z93, Z94, Cart02, Andre04, B12, Zh16}.

 Let $\mathrm{Z}_{0}:=\QQ$, $\mathrm{Z}_{1}:=\left\{ 0\right\}$ and $\mathrm{Z}_{n}$ be the $\QQ$-vector space spanned by the weight $n$ MZV's for integers $n\geq 2$. Putting $\mathrm{Z}:=\sum_{n\geq 0} \mathrm{Z}_{n}$. It is well known that $\mathrm{Z}_{m}\mathrm{Z}_{n}\subseteq \mathrm{Z}_{m+n}$, and so $\mathrm{Z}$ has a $\QQ$-algebra structure. The Goncharov's direct sum conjecture~\cite{G97} asserts that $\mathrm{Z}$ is a graded algebra (graded by weights). Therefore, understanding the $\QQ$-algebraic relations among MZV's boils down to understanding the $\QQ$-linear relations among the same weight MZV's. However, to date computing the dimension of $\mathrm{Z}_{n}$ for each $n$ is out of reach. Note that Zagier's dimension conjecture predicts that $d_{n}:=\dim_{\QQ}\mathrm{Z}_{n}$ satisfies the recursive relation
 \[  d_{n}=d_{n-2}+d_{n-3}\hbox{ for }n\geq3,  \]
and one knows by Goncharov and Terasoma that $\dim_{\QQ}\mathrm{Z}_{n}\leq d_{n}$ for each $n$ (see \cite{Te02, DG05}).

Now, we focus on depth two MZV's, which are called double zeta values. It is a natural question to ask how to compute the dimension of the $\QQ$-vector space
\[ \mathrm{DZ}_{n}:= \Sp_{\QQ} \left\{(2\pi \sqrt{-1})^{n},\zeta(2,n-2),\zeta(3,n-3),\cdots,\zeta(n-1,1)\right\}
\] for $n\geq 3$. This is still a very difficult problem in the classical theory as $\dim_{\QQ}\mathrm{DZ}_{n} $ is only known for $n=3,4$. Zagier (cf.~\cite{Z94}) gave a conjectural formula for $d_{n}$ in terms of the dimension of weight $n$ cusp forms.
\begin{conjecture}{\rm{(Zagier)}}\label{Conj1}
For $n\geq 3$, we put
\[
s_{n}:=\begin{cases}
\frac{n}{2}-1-\dim_{\CC}S_{n}({\rm{SL}}_{2}(\ZZ))& \hbox{ if }n\hbox{ is even} , \\
\frac{n+1}{2} &\hbox{ if }$n$\hbox{ is odd}, \\
\end{cases}
\] where $S_{n}(\SL_{2}(\ZZ))$ is space of weight $n$ cusp forms for $\SL_{2}(\ZZ)$. Then we have \[\dim_{\QQ}\mathrm{DZ}_{n}=s_{n}.\]
\end{conjecture}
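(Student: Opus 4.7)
The plan is to attack the two inequalities $\dim_\QQ \mathrm{DZ}_n \leq s_n$ and $\dim_\QQ \mathrm{DZ}_n \geq s_n$ by fundamentally different methods: the former should be accessible via explicit algebraic relations, while the latter is a genuine transcendence problem. For the upper bound, when $n$ is odd I would first apply the parity theorem (a consequence of the regularised double shuffle relations of Ihara-Kaneko-Zagier) together with Euler's reflection identity $\zeta(a,b)+\zeta(b,a)=\zeta(a)\zeta(b)-\zeta(a+b)$; together these express every $\zeta(a,n-a)$ as a $\QQ$-polynomial in $\zeta(2)$ and the odd single zetas $\zeta(2k+1)$ with $2k+1\leq n$, and a count of the weight $n$ monomials in such values recovers the bound $(n+1)/2$. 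For even $n$ I would reproduce the Gangl-Kaneko-Zagier construction: to each even period polynomial of a weight $n$ cusp form for $\SL_2(\ZZ)$ their map attaches a genuine $\QQ$-linear relation among $\{\zeta(a,n-a)\}_{a=2}^{n-1}\cup\{(2\pi\sqrt{-1})^{n}\}$ which is independent of the double shuffle and Euler relations, producing the bound $\dim_\QQ \mathrm{DZ}_n \leq n/2-1-\dim_\CC S_n(\SL_2(\ZZ))$.

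For the lower bound I would pass to Brown's motivic framework. Writing $\mathcal H^{\mathfrak m}$ for the Hopf algebra of motivic MZVs attached to the fundamental groupoid of $\PP^{1}\setminus\{0,1,\infty\}$, define $\mathrm{DZ}_n^{\mathfrak m}:=\Sp_\QQ\{\zeta^{\mathfrak m}(a,n-a),\,(2\pi\sqrt{-1})^{\mathfrak m,\, n}\}\subset \mathcal H^{\mathfrak m}_n$. I would exploit the infinitesimal motivic coaction and the associated derivations $\partial_{2r+1}\colon \mathcal H^{\mathfrak m}_n\to \mathcal L^{\mathfrak m}_{2r+1}\otimes \mathcal H^{\mathfrak m}_{n-2r-1}$ to follow each $\zeta^{\mathfrak m}(a,n-a)$ through the depth filtration, using known basis descriptions of $\mathcal L^{\mathfrak m}$ in low weights. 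The target is $\dim_\QQ \mathrm{DZ}_n^{\mathfrak m}=s_n$: the upper bound transports directly from Gangl-Kaneko-Zagier to the motivic level, while the matching lower bound requires exhibiting $s_n$ motivic double zetas separated by the $\partial_{2r+1}$. One would then apply the period map $\mathcal H^{\mathfrak m}\to \RR$ to transfer the equality to the numerical statement.

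The decisive obstacle is exactly this last transfer. The period map is injective on $\mathcal H^{\mathfrak m}$ only conjecturally---this is a special case of Grothendieck's period conjecture for mixed Tate motives over $\ZZ$---and no current transcendence technique can exclude ``exotic'' $\QQ$-linear relations among the particular reals $\{\zeta(a,n-a)\}$ beyond those visible motivically. Indeed, even for $n=3$ only Apéry's irrationality of $\zeta(3)$ is known, and the $\QQ$-linear independence of $\{\zeta(3),\zeta(5),\pi^{2}\zeta(3),\pi^{5}\}$ relevant to $n=5$ is entirely open. I therefore expect the realistic endpoint of the strategy above to be the motivic analogue $\dim_\QQ \mathrm{DZ}_n^{\mathfrak m}=s_n$, which, combined with the unconditional upper bound $\dim_\QQ \mathrm{DZ}_n\leq s_n$, reduces Conjecture~\ref{Conj1} to a known open instance of Grothendieck's period conjecture; an unconditional proof appears to lie beyond the reach of present methods.
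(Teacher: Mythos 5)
The item you were asked to prove is Conjecture~\ref{Conj1}, which the paper states as an open conjecture of Zagier and does not prove: the only progress it cites is the Gangl--Kaneko--Zagier upper bound $\dim_{\QQ}\mathrm{DZ}_{n}\leq s_{n}$, and the paper's actual theorems concern the positive-characteristic analogue, established by $t$-motivic transcendence tools (Yu's theorem, the ABP criterion, a Siegel lemma for $\bC^{\otimes n}$) that have no classical counterpart. So there is no proof in the paper to compare yours against, and your closing assessment is the accurate one: what you have written is a reduction of the conjecture to other open statements, not a proof, and no unconditional proof is currently known.

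Concerning the content of your sketch: the upper-bound half is sound and reproduces exactly the known result the paper cites. For odd $n$, the stuffle identity $\zeta(a)\zeta(b)=\zeta(a,b)+\zeta(b,a)+\zeta(a+b)$ together with the parity theorem reduces every $\zeta(a,n-a)$ to a $\QQ$-combination of $\zeta(n)$ and $\pi^{2j}\zeta(n-2j)$, and counting $\zeta(n)$, the $(n-3)/2$ products, and $(2\pi\sqrt{-1})^{n}$ gives $(n+1)/2=s_{n}$; for even $n$ the period-polynomial relations of Gangl--Kaneko--Zagier give $n/2-1-\dim_{\CC}S_{n}(\SL_{2}(\ZZ))$. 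The lower bound is where every known strategy stops: already for $n=5$ one would need the $\QQ$-linear independence of $\zeta(5)$, $\pi^{2}\zeta(3)$ and $(2\pi\sqrt{-1})^{5}$, which is open, and your motivic route delivers at best the motivic analogue together with a dependence on injectivity of the period map, i.e.\ on Grothendieck's period conjecture for mixed Tate motives over $\ZZ$ -- exactly as you say. One small slip: for $n=3$ no input from Ap\'ery is needed, since $(2\pi\sqrt{-1})^{3}$ is purely imaginary while $\zeta(2,1)=\zeta(3)$ is a nonzero real, so $\dim_{\QQ}\mathrm{DZ}_{3}=2$ holds trivially (this, with the elementary $n=4$ case, is why the paper says the dimension is known only for $n=3,4$). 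The instructive contrast with the paper is that in the function-field setting the analogous lower-bound problem is solved unconditionally, because Yu's theorem on last coordinates of logarithms of $\bC^{\otimes n}$ plays the role that the period conjecture would have to play classically; also note the paper's Remark~\ref{Rem:comparison}, which points out that its characteristic-$p$ result is not a verbatim analogue of Conjecture~\ref{Conj1}, since the analogous generating set there turns out to be linearly independent.
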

The best known result toward this conjecture is due to Gangl, Kaneko and Zagier~\cite{GKZ06}, who showed that $\dim_{\QQ}\mathrm{DZ}_{n}\leq s_{n}$ for each $n\geq 3$. One of their approaches is to introduce and study the double Eisenstein series to explain the relation between double zeta values and cusp forms for $\SL_{2}(\ZZ)$. The main result of this paper is to establish an effective criterion for computing the analogue of $\dim_{\QQ}\mathrm{DZ}_{n}$ in the positive characteristic function field setting.

\subsection{The main result}
Let $A:=\FF_{q}[\theta]$ be the polynomial ring in the variable $\theta$ over the finite field $\FF_{q}$ of $q$ elements with characteristic $p$, and $k$ be its quotient field. Denote by $\infty$ the infinite place of $k$. Let $k_{\infty}:= \laurent{\FF_{q}}{\frac{1}{\theta}}$ be the $\infty$-adic completion of $k$, and $\overline{k_{\infty}}$ be a fixed algebraic closure of $k_{\infty}$. Denote by $\CC_{\infty}$ the $\infty$-adic completion of $\overline{k_{\infty}}$. Finally, we let $A_{+}$ be the set of all monic polynomials in $A$. We then have the following comparisons:
\[ A_{+}\leftrightarrow \NN, \hbox{ } A\leftrightarrow \ZZ,\hbox{ } k\leftrightarrow \QQ, \hbox{ } k_{\infty}\leftrightarrow \RR, \hbox{ } \CC_{\infty}\leftrightarrow \CC     .\]

For any $r$-tuple of positive integers $\fs=(s_{1},\ldots,s_{r})\in \NN^{r}$, Thakur~\cite{T04} introduced the multiple series
\[
  \zeta_{A}(\fs) := \sum \frac{1}{a_1^{s_1} \cdots
  a_r^{s_r}} \in k_{\infty},
\]
where the sum is taken over $r$-tuples $(a_1, \dots, a_r)\in A_{+}^{r}$ satisfying the strict inequalities $\deg_{\theta} a_1 > \cdots > \deg_{\theta} a_r$. In analogy with the classical MZV's, $r$  is called the depth and $\rm{wt}(\fs):=s_1+\cdots+s_r$ is called the weight of the presentation $\zeta_{A}(\fs)$. These special values $\zeta_{A}(\fs)$ are called multizeta values (abbreviated as MZV's too) and each of them is non-vanishing by \cite{T09a}. Moreover, these MZV's have a $t$-motivic interpretation in the sense that they occur as periods of certain mixed Carlitz-Tate $t$-motives by the work of Anderson and Thakur~\cite{AT09}.

In \cite{T10}, Thakur showed that the product of two MZV's can be expressed as an $\FF_{p}$-linear combination of some MZV's with the same weight, which is regarded as a kind of shuffle product relation (cf.~\eqref{E:Chen}), and so the $\FF_{p}$-vector space spanned by MZV's has a ring structure. Further, an analogue of Goncharov's direct sum conjecture was shown by the author~\cite{C14}, that the $k$-algebra generated by all MZV's is a graded algebra (graded by weights). In other words, all $k$-linear relations among MZV's are generated by those $k$-linear relations among the same weight MZV's.

In the classical theory of MZV's, (regularized) double shuffle relations give rise to rich $\QQ$-linear relations among the same weight MZV's (see~\cite{IKZ06}). Unlike the classical situation, there is no natural total order on $A_{+}$ and so far a nice analogue of the iterated integral expression for MZV's is not developed yet. To date there is no different expression for the product of two MZV's other than Thakur's relation mentioned above, and hence we do not have the analogue of double shuffle relations to produce $k$-linear relations among MZV's naturally. In his Ph.D. thesis, Todd~[To15] tried to produce $k$-linear relations among the same weight MZV's using power sums and used lattice reduction methods to give a conjecture on the dimensions in question.

Let $\tilde{\pi}$ be a fixed fundamental period of the Carlitz $\FF_{q}[t]$-module $\bC$, which plays the analogous role of $2\pi \sqrt{-1}$, and let $\bC^{\otimes n}$ be the $n$th tensor power of the Carlitz module $\bC$ for a positive integer $n$ (see \S\S~\ref{sec:t-modules} for the definitions). The main result of this paper gives a new point of view to completely determine the $k$-linear relations among weight $n$ double zeta values together with  $\tilde{\pi}^{n}$. It is stated as follows and its proof is given in Corollary~\ref{C:ReMainThm1} and Theorem~\ref{T:Siegel}.
\begin{theorem}\label{T:MainThmIntroduction}
Let $n\geq 2$ be a positive integer. Put
\[ \mathcal{V}:=\left\{ (s_{1},s_{2})\in \NN^{2}; s_{1}+s_{2}=n\hbox{ and }(q-1)|s_{2}  \right\}  .\]
\begin{enumerate}
\item For each $\fs\in \cV$, we explicitly construct a special point $\Xi_{\fs}\in \bC^{\otimes n}(A)$ so that
    \[
  \begin{array}{rl}
     & \dim_{k}\Sp_{k}\left\{\tilde{\pi}^{n},\zeta_{A}(1,n-1),\zeta_{A}(2,n-2),\cdots,\zeta_{A}(n-1,1) \right\} \\
     &\\
     =& n- \lfloor \frac{n-1}{q-1} \rfloor+\rank_{\FF_{q}[t]} \Sp_{\FF_{q}[t]}\left\{\Xi_{\fs} \right\}_{\fs\in \mathcal{V}}. \\
  \end{array}
\]
\item We establish an effective algorithm for computing the rank \[\rank_{\FF_{q}[t]} \Sp_{\FF_{q}[t]}\left\{\Xi_{\fs} \right\}_{\fs\in \mathcal{V}}.\]
\end{enumerate}
\end{theorem}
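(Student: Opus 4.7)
The plan is to translate the question about $k$-linear relations among the weight $n$ double zeta values together with $\tilde{\pi}^n$ into an equivalent question about $\FF_q[t]$-linear relations among explicit algebraic points in the $t$-module $\bC^{\otimes n}$, following the $t$-motivic philosophy of Anderson and Thakur. For each $\fs = (s_1,s_2)$ with $s_1 + s_2 = n$, the double zeta value $\zeta_A(\fs)$ appears (up to a known $k$-multiple) as the last coordinate of $\log_{\bC^{\otimes n}}$ evaluated at a canonical pre-image point $\bv_{\fs}$, while $\tilde{\pi}^n$ is the fundamental period of $\bC^{\otimes n}$. This repackages the set $\{\tilde{\pi}^n,\zeta_A(s_1,n-s_1)\}_{s_1}$ as coordinates of a single period matrix attached to a mixed Carlitz--Tate $t$-motive built from $\bC^{\otimes n}$ and trivial Carlitz--Tate pieces.

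For Part~(1), I would invoke the transcendence machinery of Papanikolas together with the ABP criterion and the graded--algebra result of \cite{C14} to show that every $k$-linear relation among the quantities in question is induced from a motivic relation, which in turn corresponds to an $\FF_q[t]$-linear relation among certain points of $\bC^{\otimes n}(\ok)$. The role of the condition $(q-1)\mid s_2$ is the following: when $(q-1)\nmid s_2$, Carlitz's criterion forces $\zeta_A(s_2)$ to be $k$-linearly independent from $\tilde{\pi}^{s_2}$, so $\zeta_A(s_1,s_2)$ cannot participate in any nontrivial reducing relation; when $(q-1)\mid s_2$, Carlitz's Eulerian identity $\zeta_A(s_2)\in k^{\times}\cdot\tilde{\pi}^{s_2}$ is exactly what allows one to convert the pre-image $\bv_{\fs}$ into an $A$-integral point $\Xi_{\fs}\in \bC^{\otimes n}(A)$. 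A dimension count then delivers the formula: the summand $n - \lfloor (n-1)/(q-1)\rfloor$ accounts for $\tilde{\pi}^n$ together with the $\zeta_A(\fs)$ not amenable to any Eulerian reduction, and every further drop in dimension is accounted for by an $\FF_q[t]$-linear relation among the $\Xi_{\fs}$ for $\fs\in \cV$.

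For Part~(2), the input is a finite list of explicit coordinate vectors $\Xi_{\fs}\in A^n$, and the task is to compute the $\FF_q[t]$-rank of their span inside the finitely generated $\FF_q[t]$-module $\bC^{\otimes n}(A)$. The strategy is a Siegel's lemma in this $t$-module setting: from the explicit estimates of the $\theta$-degrees of the coordinates of $\Xi_{\fs}$ (readable from the Anderson--Thakur series defining them), any relation $\sum_{\fs\in \cV} c_{\fs}(t)\,\Xi_{\fs}=0$ forces an a priori bound on $\deg_t c_{\fs}$; this converts the problem into a finite linear algebra computation over $\FF_q$ and hence into an algorithm.

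The hardest step will be the explicit construction of $\Xi_{\fs}$ and the verification that its $\bC^{\otimes n}$-logarithm delivers exactly the desired $k$-linear combination of $\zeta_A(\fs)$ and $\tilde{\pi}^n$; the depth-two Anderson--Thakur series have to be manipulated together with the Eulerian identity so that a genuinely $A$-integral point emerges in $\bC^{\otimes n}(A)$, not merely an $\ok$-point. Once this construction and the associated height bounds are in place, the transcendence reduction and the Siegel's lemma argument proceed along standard lines within the $t$-motivic framework.
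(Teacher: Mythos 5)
Your outline captures the right high-level philosophy (logarithmic interpretation, Yu's transcendence, a Siegel-type effectivity argument), but two of the four steps are mischaracterized in ways that would block the proof.

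First, for the reduction to the index set $\cV$ (the ``necessary condition'' step), you reason that when $(q-1)\nmid s_2$ the value $\zeta_A(s_2)$ is $k$-independent from $\tilde{\pi}^{s_2}$ and hence $\zeta_A(s_1,s_2)$ ``cannot participate in any nontrivial reducing relation.'' That inference is not valid as stated: there is no a priori reason a depth-two value must inherit the $k$-linear independence properties of its inner depth-one constituent. The paper's Theorem~\ref{T:NecCond} establishes this by a nontrivial argument: from a hypothetical $k$-linear relation it builds a single Frobenius system $\psi^{(-1)}=\Phi\psi$ packaging all the $\cL$-series, applies the ABP criterion to get ${\bf f}\psi=0$ with $\bf f$ algebraic, shows the residual vector ${\bf f}-{\bf f}^{(-1)}\Phi$ vanishes by specializing at $t=\theta^{q^N}$, and then reads off, via the $\Ext^1_{\cF}({\bf 1},M')$ interpretation, that each $\bv_{s_{j2}}$ with nonzero coefficient must be $\FF_q[t]$-torsion in $\bC^{\otimes s_{j2}}(A)$ — which by the Anderson--Thakur/Yu torsion criterion forces $(q-1)\mid s_{j2}$. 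You need this torsion mechanism; the independence of $\zeta_A(s_2)$ from $\tilde{\pi}^{s_2}$ alone does not suffice.

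Second, your Siegel's lemma is set up incorrectly. You claim that from height estimates on the coordinates of $\Xi_{\fs}$ one gets ``an a priori bound on $\deg_t c_{\fs}$,'' turning the problem into linear algebra over $\FF_q$. No such bound exists: the module of relations is an $\FF_q[t]$-submodule of $\FF_q[t]^{|\cV|}$, closed under multiplication by arbitrary $p\in\FF_q[t]$, so the $t$-degrees of relations are unbounded. The quantity that can be bounded, and is bounded in the paper's Theorem~\ref{T:Siegel}, is the $\theta$-degree of the auxiliary polynomial $\delta\in A[t]$ witnessing triviality of the extension class — the $\delta$ in the Frobenius difference equation $\delta^{(-1)}(t-\theta)^n+F^{(-1)}(t-\theta)^n=\delta$, where $F=\sum a_i f_i$. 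Once $\deg_\theta\delta<\ell$ is established by a sup-norm comparison, expanding $\delta$ and $F$ in powers of $\theta$ and equating coefficients yields a finite linear system in the unknowns $c_1,\dots,c_\ell,a_1,\dots,a_m$ over $\FF_q[t]$ (not over $\FF_q$); Gaussian elimination over the PID $\FF_q[t]$ then gives the rank of the solution module. Without introducing $\delta$ and working with the triviality-of-extension criterion, there is no route to the needed degree bound.

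On the constructive side, your description of how $\Xi_{\fs}$ arises is vague but not incompatible with the paper: the paper takes $\Xi_{\fs}=\rho_{\alpha_{\fs}}(\bv_{\fs})$ where $\alpha_{\fs}=a_{\fs'}$ annihilates the torsion point $\bv_{\fs'}$ per the CPY Eulerian criterion, so that $\Xi_{\fs}$ lands in $\Ker\pi=\bC^{\otimes n}(A)$; you would need to make this explicit, and the verification that the last coordinate of a suitable preimage under $\exp_n$ equals $\alpha_{\fs}(\theta)\Gamma_{s_1}\Gamma_{s_2}\zeta_A(\fs)$ (Theorem~\ref{T1:LogMZV}) requires Papanikolas's formula for the bottom row of the coefficients of $\log_n$, which your sketch does not mention. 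Finally, the graded-algebra result of \cite{C14} is not actually used here since the weight $n$ is fixed throughout.
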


In other words, we relate the $k$-linear relations among double zeta values to  the $\FF_{q}[t]$-linear relations among the special points $\left\{ \Xi_{\fs}\right\}_{\fs\in \cV}$, and which can be effectively computed and determined.

We mention that although Todd~\cite{To15} provided some ways of producing $k$-linear relations among the same weight MZV's, it is not clear how to derive $k$-linear relations among weight $n$ double zeta values together with $\tilde{\pi}^{n}$ from Todd's relation. When the given weight $n\geq 2$ is $A$-even, ie., $(q-1)| n$ (as $q-1$ is the cardinality of the unit group $A^{\times}$), one can use the following formula to produce linear relations.  For two positive $A$-even integers $r$ and $s$ with $r+s=n$, one has
\begin{equation}\label{E:Chen}
\begin{aligned}\zeta_{A}(r) \zeta_{A}(s)&= \zeta_{A} (r,s) + \zeta_{A} (s,r) + \zeta_{A} (r+s) \\
&+  \sum_{i+j =n  \atop   (q-1) | j} \left[(-1)^{s-1}\binom{j-1}{s-1}+(-1)^{r-1}\binom{j-1}{r-1}  \right] \zeta_{A} (i, j)\ \ \ \
\end{aligned}
\end{equation}
(see \cite{T10} for the existence of such relations and \cite{Chen15} for the explicit formula). By work of Carlitz~\cite{Ca35}, we know that
\[ \zeta_{A}(r)/\tilde{\pi}^{r}\in k,\hbox{ } \zeta_{A}(s)/\tilde{\pi}^{s}\in k,\hbox{ } \zeta_{A}(n)/\tilde{\pi}^{n}\in k,     \]
and so (\ref{E:Chen}) gives rise to a nontrivial  linear relation among $\tilde{\pi}^{n}$ and weight $n$ double zeta values. As all the coefficients of the double zeta values are in $\FF_{p}$,  Thakur call it an \lq\lq $\FF_{p}$-linear reation\rq\rq.  Our effective algorithm based on Theorem~\ref{T:MainThmIntroduction} is able to find all the independent $k$-linear relations in question. As observed by Thakur~\cite{T09b}, those $\FF_{p}$-linear relations produced by (\ref{E:Chen}) can not generate all the $k$-linear relations, and our computational data can capture the difference precisely. We refer the reader to the end of this paper.

We mention that there is a difference between Conjecture~\ref{Conj1} and our results, and refer the reader to Remark~\ref{Rem:comparison} about the detailed comparison. We further mention that in \cite{Chen16}, Drinfeld double Eisenstein series are introduced. Since double zeta values occur as the constant terms of Drinfeld double Eisenstein series,  one naturally expects that Drinfeld cusps forms~\cite{Go80, Ge88} can have connections with double zeta values if they can be shown to be a subspace of the space of Drinfeld double Eisenstein series (cf.~\cite{GKZ06}).

\subsection{Methods of proof} Let notation and hypothesis be given as in Theorem~\ref{T:MainThmIntroduction}. We outline the major steps in the proof of Theorem~\ref{T:MainThmIntroduction}.
\begin{enumerate}
\item[(I)] $\underline{\hbox{A necessary condition}}$. We show that all the $k$-linear relations among the set \[\left\{ \tilde{\pi}^{n}\right\}\cup \left\{\zeta_{A}(1,n-1),\ldots,\zeta_{A}(n-1,1)\right\}\] are those coming from the $k$-linear relations among  $\left\{ \tilde{\pi}^{n}\right\}\cup \left\{ \zeta_{A}(\fs) \right\}_{\fs\in \cV}$, and so we are reduced to studying the $k$-linear relations among  $\left\{\tilde{\pi}^{n} \right\}\cup \left\{ \zeta_{A}(\fs) \right\}_{\fs\in \cV}$. This result is Corollary~\ref{C:NecCondDepth2}.
\item[(II)] $\underline{\hbox{Logarithmic interpretion}}$. Let $r\geq 2$. For any $\fs=(s_{1},\ldots,s_{r})\in \NN^{r}$ with $\zeta_{A}(s_{2},\ldots,s_{r})$ Eulerian, ie.,
\[\zeta_{A}(s_{2},\ldots,s_{r})/\tilde{\pi}^{s_{2}+\cdots+s_{r}}\in k,\] we relate $\zeta_{A}(\fs)$ to the last coordinate of the logarithm of $\bC^{\otimes (s_{1}+\cdots+s_{r})}$ at an explicit integral point. This result is  Theorem~\ref{T1:LogMZV}.
\item[(III)] $\underline{\hbox{The identity}}$. Using the results in (I) and (II), we establish the equality in Theorem~\ref{T:MainThmIntroduction}~(1) by appealing to Yu's transcendence theory~\cite{Yu91} for the last coordinate of the logarithm of $\bC^{\otimes n}$ at algebraic points. This result is Corollary~\ref{C:ReMainThm1}.
\item[(IV)] $\underline{\hbox{A Siegel's Lemma}}$. We establish a principle of Siegel's lemma for integral points in $\bC^{\otimes n}$ to achieve Theorem~\ref{T:MainThmIntroduction}~(2). This result is Theorem~\ref{T:Siegel}.
\end{enumerate}

The four steps laid out above give an approach toward producing $k$-linear relations among the same weight MZV's of arbitrary depths. Actually, for weight $n\geq 2$ combining the ideas of (II), (III) and (IV) above one determine all the $k$-linear relations among the set \[\left\{ \zeta_{A}(n)\right\}\cup \left\{ \zeta_{A}(\fs);\fs\in \mathcal{E}_{n} \right\},\]  where $\mathcal{E}_{n}$ is the set consisting of all $\fs\in \NN^{r}$ with $r\geq2$ and ${\rm{wt}}(\fs)=n$ satisfying that $\zeta_{A}(\fs')$ is Eulerian, where $\fs'=(s_{2},\ldots,s_{r})$ for $\fs=(s_{1},\ldots,s_{r})$. This result is   Corollary~\ref{C:DimHigherDepth} together with Theorem~\ref{T:Siegel}. In \cite{CPY14} an effective criterion for Eulerian MZV's is established, and conjecturally one can describe the set $\mathcal{E}_{n}$ precisely (see~\cite[\S~6.2]{CPY14}). We note that assuming Todd's dimension conjecture~\cite{To15}, for weight $n\geq 2$ the $k$-linear relations among the set  $\left\{ \zeta_{A}(n)\right\}\cup \left\{ \zeta_{A}(\fs);\fs\in \mathcal{E}_{n} \right\}$ are not enough to generate all the $k$-linear relations among weight $n$ MZV's.

The idea of proving (I) above is to construct a suitable system of Frobenius difference equations for each $k$-linear relation among the double zeta values and $\tilde{\pi}^{n}$, and then use ABP-criterion~\cite[Thm.~3.1.1]{ABP04} in the study of certain $\Ext^{1}$-modules. For the proof of (II) above, we need an explicit formula for the bottom row of each coefficient matrix of the logarithm of  ${\bC^{\otimes n}}$ due to Papanikolas~\cite{P14}. Combining with the period interpretation of MZV's given by Anderson-Thakur~\cite{AT09}, one is able to relate $\zeta_{A}(\fs)$ for those $\fs\in \cV$ in Theorem~\ref{T:MainThmIntroduction} to the last coordinate of the logarithm of $\bC^{\otimes n}$. The proof of (III) is to use the functional equation of the exponential function of $\bC^{\otimes n}$ and apply Yu's theory~\cite{Yu91}. To achieve (IV), we translate the effectiveness question to a question of the type of Siegel's lemma for certain difference equations, and we prove it directly.

\subsection{Outline of this paper} In order to let the present paper be self-contained, in \S\S~2 we give some preliminaries about some major results in \cite{AT90, Yu91, CPY14}.  We then give proofs of (I)-(IV) above in \S\S~3-6 respectively. The proof of Theorem~\ref{T:MainThmIntroduction} is given in Corollary~\ref{C:ReMainThm1} and Theorem~\ref{T:Siegel}. In \S\S~\ref{sec:algorithm}, we give an effective algorithm for implementing Theorem~\ref{T:MainThmIntroduction}, and at the end of this paper we provide some data of this computation using Magma.

\subsection*{Acknowledgements}
I am very grateful to M.~Kaneko and J.~Yu for their helpful conversations that inspire this project, and to M. Papanikolas for sharing his formula with me, and to Y.-H. Lin for writing the Magma code to compute the dimensions, and to W.~D.~Brownawell, D.~Goss and D.~Thakur for helpful comments. I further thank H.-J.~Chen,  H.~Furusho, Y.-L.~Kuan, Y.~Mishiba, K.~Tasaka, A.~Tamagawa, S.~Yasuda and J. Zhao for many useful discussions and comments. Part of this work was carried out when I visited Beijing Tsing Hua University. I particularly thank Prof. L.~Yin for his kind invitation and support during his lifetime. Finally, I would like to express my gratitude to the referee for providing many helpful comments that greatly improve the exposition of this paper.

\section{Preliminaries}
\subsection{Notation}We adopt the following notation.
\begin{longtable}{p{0.5truein}@{\hspace{5pt}$=$\hspace{5pt}}p{5truein}}
$\FF_q$ & the finite field with $q$ elements, for $q$ a power of a
prime number $p$. \\
$\theta$, $t$ & independent variables. \\
$A$ & $\FF_q[\theta]$, the polynomial ring in the variable $\theta$ over $\FF_q$.
\\
$A_{+}$ & set of monic polynomials in A.
\\
$k$ & $\FF_q(\theta)$, the fraction field of $A$.\\
$k_\infty$ & $\laurent{\FF_q}{1/\theta}$, the completion of $k$ with
respect to the infinite place $\infty$.\\
$\overline{k_\infty}$ & a fixed algebraic closure of $k_\infty$.\\
$\ok$ & the algebraic closure of $k$ in $\overline{k_\infty}$.\\
$\CC_\infty$ & the completion of $\overline{k_\infty}$ with respect to
the canonical extension of $\infty$.\\
$|\cdot|_{\infty}$& a fixed absolute value for the completed field $\CC_{\infty}$ so that $|\theta|_{\infty}=q$.\\
$\power{\CC_\infty}{t}$ & ring of formal power series in $t$ over $\CC_{\infty}$.\\
$\laurent{\CC_\infty}{t}$ & field of Laurent series in $t$ over $\CC_{\infty}$.\\
$\TT$& the ring of power series in  $\power{\CC_\infty}{t}$ convergent on the closed unit disc.\\
$\tilde{\pi}$ & a fixed fundamental period of the Carlitz module $\bC$.\\
$\GG_{a}$ & the additive group scheme over $A$.\\
\end{longtable}

\subsection{Anderson t-modules revisited}\label{sec:t-modules}
We let $\tau:=(x\mapsto x^{q})$ be the $q$th power endomorphism of $\CC_{\infty}$,  and define $\CC_{\infty}\{\tau\}$ to be the twisted polynomial ring in the variable $\tau$ over $\CC_{\infty}$ subject to the relation
\[ \tau \alpha=\alpha^{q}\tau\hbox{ for }\alpha\in \CC_{\infty}.  \] It follows that we have the matrix ring $\Mat_{n}\left( \CC_{\infty}\{\tau\}\right)$ with entries in $\CC_{\infty}\{\tau\}$ and any element in this ring can be expressed as
\[\varphi=\sum_{i\geq 0}a_{i}\tau^{i}   \]
with $a_{i}\in \Mat_{n}(\CC_{\infty})$ and $a_{i}=0$ for $i\gg 0$. We denote by $\partial \varphi:=a_{0}$, the constant matrix of $\varphi.$ For convenience, we still denote by $\tau$ the operator on $\CC_{\infty}^{n}$ which raises each component to the $q$th power.  We denote by $\GG_{a}^{n}$ the $n$-dimensional additive group scheme over $A$ and note that $\Mat_{n}(\CC_{\infty}\{\tau\})$ can be identified with $\End_{\FF_{q}}\left(\GG_{a}^{n}(\CC_{\infty})\right)$, the ring of $\FF_{q}$-linear endomorphisms of the algebraic group $\GG_{a}^{n}(\CC_{\infty})=\CC_{\infty}^{n}$. Via this identification $\partial \varphi$ is the tangent map of the morphism $\varphi:\GG_{a}^{n}(\CC_{\infty})\rightarrow \GG_{a}^{n}(\CC_{\infty})$ at the identity.

By an {\it{$n$-dimensional $t$-module}} we mean a pair $E=(\GG_{a}^{n},\phi)$, where the underlying space of $E$ is $\GG_{a}^{n}(\CC_{\infty})$, which is equipped with an $\FF_{q}[t]$-module structure via the $\FF_{q}$-linear ring homomorphism
\[ \phi: \FF_{q}[t] \rightarrow \Mat_{n}\left(\CC_{\infty}\{\tau\}\right) \]
so that $\left(\partial \phi_{t}-\theta I_{n}\right)$ is a nilpotent matrix. For such a $t$-module, Anderson~\cite{A86} showed that there is a unique $n$-variable power series $\exp_{E}$ defined on the whole $\CC_{\infty}^{n}$, called the {\it{exponential}} of the $t$-module $E$, for which:
\begin{enumerate}
\item[$\bullet$] $\exp_{E}:\Lie\left( \GG_{a}^{n}(\CC_{\infty})\right)= \CC_{\infty}^{n}\rightarrow E(\CC_{\infty}) $ is $\FF_{q}$-linear.
\item[$\bullet$] $\exp_{E}$ is of the form $ I_{n} +\sum_{i=1}^{\infty}\alpha_{i}\tau^{i}$ with $\alpha_{i}\in \Mat_{n}(\CC_{\infty})$.
\item[$\bullet$] $\exp_{E}$ satisfies the functional identity: for all $a\in\FF_{q}[t]$,
\[ \exp_{E} \circ \partial \phi_{a} =\phi_{a}\circ \exp_{E}. \]
\end{enumerate}

One typical example of a nontrivial $t$-module is the {\it{$n$th tensor power of the Carlitz module}} denoted by $\bC^{\otimes n}=(\GG_{a}^{n},[\cdot]_{n})$ for $n\in \NN$ (see~\cite{AT90}). Here $[\cdot]_{n}$ is the $\FF_{q}$-linear ring homomorphism $[\cdot]_{n}:\FF_{q}[t]\rightarrow \Mat_{n}(\CC_{\infty}[\tau])$ determined by
\[ [t]_{n}=\left(
             \begin{array}{cccc}
               \theta & 1 & \cdots & 0 \\
                & \theta & \ddots & \vdots \\
                &  & \ddots & 1 \\
                &  &  & \theta \\
             \end{array}
           \right)+ \left(
                      \begin{array}{cccc}
                        0 & 0 & \cdots & 0 \\
                        \vdots &  &  & \vdots \\
                        0 & 0 & \cdots & 0 \\
                        1 & 0 & \cdots & 0 \\
                      \end{array}
                    \right)\tau
 .\] When $n=1$, $\bC:=\bC^{\otimes 1}$ is called the Carlitz $\FF_{q}[t]$-module.

We denote by $\exp_{n}:=\exp_{\bC^{\otimes n}}$ the exponential  of $\bC^{\otimes n}$. We define $\log_{n}:=\log_{\bC^{\otimes n}}$ to be the unique power series of $n$ variables, called the {\it{logarithm}} of $\bC^{\otimes n}$, for which:
\begin{enumerate}
\item[$\bullet$] $\log_{n}$ is of the form $\log_{n}= I_{n}+\sum_{i=1}^{\infty} P_{i} \tau^{i}$ with $P_{i}\in \Mat_{n}(k)$.
\item[$\bullet$] $\log_{n}$ satisfies the functional identity: for any $a\in \FF_{q}[t]$,
\[ \log_{n}\circ [a]_{n}=\partial [a]_{n}\circ \log_{n} . \]
\end{enumerate}
As formal power series we note that $\exp_{n}$ and $\log_{n}$ are inverses of each other:
\[\exp_{n}\circ\log_{n}={\rm{identity}}=\log_{n}\circ\exp_{n}   .\]
In the case of $n=1$, $\exp_{\bC}$ and $\log_{\bC}$ are called the Carlitz exponential and Carlitz logarithm respectively, and these two functions can be written down explicitly as follows. Putting $D_{0}=1$ and $D_{i}:=\prod_{j=0}^{i-1}(\theta^{q^{i}}-\theta^{q^{j}})$ for $i\in \NN$, then
\[ \exp_{\bC}=\sum_{i=0}^{\infty} \frac{1}{D_{i}} \tau^{i} .\]
We further put $L_{0}:=1$ and $L_{i}:=(\theta-\theta^{q})\cdots (\theta-\theta^{q^{i}})$ for $i\in \NN$, and  then
\[ \log_{\bC}=\sum_{i=0}^{\infty} \frac{1}{L_{i}}\tau^{i}.  \]
For the details, see~\cite{Go96, T04}.
\subsection{Review of the theories of Anderson-Thakur and Yu}
\subsubsection{Theory of Anderson-Thakur} In their seminal paper~\cite{AT90}, Anderson and Thakur first showed that for each $n\in \NN$, $\exp_{n}:\CC_{\infty}^{n}\rightarrow \bC^{\otimes n}(\CC_{\infty})$ is surjective and its kernel is of rank one over $A$ in the sense that
\[ \Ker \exp_{n}=\partial [\FF_{q}[t]]_{n} \lambda_{n},   \]
where $\lambda_{n}\in \CC_{\infty}^{n}$ is of the form
\[\lambda_{n}=\left(
                \begin{array}{c}
                  * \\
                  \vdots \\
                  \tilde{\pi}^{n} \\
                \end{array}
              \right)
  .\] The $\tilde{\pi}$ above is a  fundamental period of the Carlitz module $\bC$ in the sense that $\Ker \exp_{\bC}=A\tilde{\pi}$, and it is fixed throughout this paper.

 For a non-negative integer $n$, we express $n$ as
\[
n=\sum_{i=0}^{\infty} n_{i}q^{i} \quad \textnormal{($0\leq n_{i}\leq q-1$, $n_{i}=0$ for $i\gg 0$)}.
\]Then the Carlitz factorial is defined by
\[
\Gamma_{n+1}:=\prod_{i=0}^{\infty} D_{i}^{n_{i}}\in A.
\]One of the major results in \cite{AT90} is to relate $\zeta_{A}(n)$ to the last coordinate of the logarithm of $\bC^{\otimes n}$. It is stated as follows.
\begin{theorem}{\rm{(Anderson-Thakur~\cite[Thm.~3.8.3]{AT90})}}\label{T:AT90}
For each positive integer $n$, one explicitly constructs an integral point $\bv_{n}\in \bC^{\otimes n}(A)$ so that there exists a vector $Y_{n}\in \CC_{\infty}^{n}$ of the form
\[ Y_{n}=\left(
           \begin{array}{c}
             * \\
             \vdots \\
             \Gamma_{n}\zeta_{A}(n) \\
           \end{array}
         \right)
    \] satisfying \[\exp_{n}(Y_{n})=\bv_{n}.\]
\end{theorem}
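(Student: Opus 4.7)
The plan is to realize $\Gamma_n \zeta_A(n)$ as the last coordinate of $\log_n$ evaluated at an explicit integral point of $\bC^{\otimes n}$, and then to invert via the functional identity $\exp_n \circ \log_n = \mathrm{id}$. The construction will be driven by the Anderson--Thakur polynomials $H_n(t) \in A[t]$, characterized by a two-variable generating identity built from the Carlitz exponential and the sequence $\{D_i\}$.

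First, I would define the $H_n$ via their generating series and verify, by direct expansion, that each $H_n(t)$ is a polynomial with coefficients in $A$; I would also extract a sharp bound on $\deg_t H_n$ (close to $nq/(q-1)$), which will drive the eventual convergence estimate. Second, and most critically, I would establish the \emph{power-sum identity}: for every $i \geq 0$, the power sum $S_i(n) := \sum_{a \in A_+,\ \deg_\theta a = i} a^{-n}$ can be written as an explicit Frobenius twist of $H_n(\theta)$ normalized by a product of $L_j$ and $D_j$. Summing over $i$ rewrites $\Gamma_n \zeta_A(n) = \Gamma_n \sum_{i \geq 0} S_i(n)$ as one single explicit $\infty$-adically convergent series in these quantities.

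Third, combining the explicit form $\log_n = I_n + \sum_{i \geq 1} P_i \tau^i$ — whose bottom row is computable inductively from the defining relation $\log_n \circ [t]_n = \partial [t]_n \circ \log_n$, or equivalently by formally inverting $\exp_n$ — with the series from the previous step, I would identify that series with the last coordinate of $\log_n(\bv_n)$ for the explicit integral point
\[
\bv_n := (0,\ldots,0,H_n(\theta))^{\mathrm{T}} \in \bC^{\otimes n}(A).
\]
Convergence of $\log_n(\bv_n)$ is guaranteed by the super-exponential growth $|L_j|_\infty \sim q^{jq^j/(q-1)+\cdots}$ dominating the $\infty$-adic size of the twists $H_n(\theta)^{(j)}$. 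Setting $Y_n := \log_n(\bv_n) \in \CC_\infty^n$, the functional identity $\exp_n \circ \log_n = \mathrm{id}$ applied at this convergent point yields $\exp_n(Y_n) = \bv_n$; by construction the last coordinate of $Y_n$ equals $\Gamma_n \zeta_A(n)$, and the starred entries above are read off from the remaining rows of the $P_i$.

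The main obstacle is the power-sum identity in the second step: this is the nontrivial combinatorial identity at the heart of \cite{AT90}, requiring delicate manipulation of the two-variable generating series for $H_n$ together with the Frobenius-equivariance of the Carlitz coefficients $L_j$, $D_j$. All subsequent steps are structural — an application of the explicit logarithm formula, a convergence bookkeeping, and the universal functional equation of $\exp_n$ — so the whole argument stands or falls on the identity relating $H_n(\theta)$ to $S_i(n)$.
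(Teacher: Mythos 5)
Your overall architecture — Anderson--Thakur polynomials, a power-sum identity expressing $S_i(n)$ via Frobenius twists of $H_{n-1}$, and then a log/exp interpretation on $\bC^{\otimes n}$ — is the right skeleton, but there are two concrete gaps that would sink the argument as written.

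First, the explicit point $\bv_n := (0,\ldots,0,H_n(\theta))^{\mathrm{T}}$ is not the Anderson--Thakur special point. As the paper records in Remark~\ref{Rem:special point}, $\bv_n$ is the image of $H_{n-1}^{(-1)}(t-\theta)^n \equiv H_{n-1}$ under the identification $C^{\otimes n}/(\sigma-1)C^{\otimes n} \cong \bC^{\otimes n}(\ok)$ of \eqref{E:Isom C otimes n}; concretely, after reducing $H_{n-1}(t)$ modulo $(\sigma-1)$ to a representative $f(t) = u_1(t-\theta)^{n-1} + \cdots + u_n$ of $t$-degree $< n$, one has $\bv_n = (u_1,\ldots,u_n)^{\tr}$. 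Since $H_{n-1}(t)$ is, for general $n$, a nonconstant polynomial in $t$, these $u_j$ are generically all nonzero, not just $u_n$. This matters quantitatively: feeding your $\bv_n$ into Papanikolas' bottom-row formula (Proposition~\ref{T:CoeffPi}) gives a last coordinate $H_{n-1}(\theta) + \sum_{i\geq 1} H_{n-1}(\theta)^{q^i}/L_i^n$, whereas the identity \eqref{E:OmegaHn} forces $\Gamma_n\zeta_A(n) = \sum_{i\geq 0} H_{n-1}^{(i)}(\theta)/L_i^n$, and $H_{n-1}^{(i)}(\theta) \neq H_{n-1}(\theta)^{q^i}$ whenever $\deg_t H_{n-1} > 0$ — twisting the coefficients of a polynomial in $t$ is not the same as taking the $q^i$-th power of its value at $\theta$. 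So your proposed $\bv_n$ does not produce $\Gamma_n\zeta_A(n)$ in the last slot (except in degenerate small-$n$ cases where $H_{n-1}$ is constant). (There is also a harmless off-by-one in the index, $H_n$ versus $H_{n-1}$.)

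Second, you claim that convergence of $\log_n(\bv_n)$ is ``guaranteed'' and then set $Y_n := \log_n(\bv_n)$, but $\bv_n$ need not lie in the convergence domain $\mathbb{D}_n$ of $\log_n$: the coefficients of $H_{n-1}$ can have large $\theta$-degree, and the theorem only asserts the existence of some $Y_n \in \exp_n^{-1}(\bv_n)$, which follows from surjectivity of $\exp_n$ and is strictly weaker than $\log_n$ converging at $\bv_n$. The paper itself, when running the analogous argument in the depth-$\geq 2$ case (Theorem~\ref{T1:LogMZV}), explicitly works around this by passing to a $t^m$-division point via Lemma~\ref{L:DivisionPoint} and then translating by an element of $\Ker\exp_n$. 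Your proof needs the same kind of correction (or the original log-algebraicity argument of \cite{AT90}, which produces $Y_n$ directly without inverting $\exp_n$ on the nose).
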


For an integer $m$, we define $m$-fold Frobenius twisting by
\[
     \begin{array}{rcl}
      \laurent{\CC_\infty}{t}  & \rightarrow & \laurent{\CC_\infty}{t},\\
       f:=\sum_{i}a_{i}t^{i} & \mapsto & f^{(m)}:=\sum_{i}{a_{i}}^{q^{m}}t^{i}. \\
     \end{array}   \]
We extend this to matrices with entries in $\laurent{\CC_\infty}{t}$ by twisting entry-wise.

We put $G_{0}(y):=1$ and define polynomials $G_{n}(y)\in \FF_{q}[t,y]$ for $n\in \NN$ by the product
\[
G_{n}(y)=\prod_{i=1}^{n}\left( t^{q^{n}}-y^{q^{i}} \right).
\] For $n=0,1,2,\ldots$, we define the sequence of Anderson-Thakur polynomials $H_{n}\in A[t]$ by the generating function identity
\[
\left( 1-\sum_{i=0}^{\infty} \frac{ G_{i}(\theta) }{ D_{i}|_{\theta=t}} x^{q^{i}}  \right)^{-1}=\sum_{n=0}^{\infty} \frac{H_{n}}{\Gamma_{n+1}|_{\theta=t}} x^{n}.
\]

We put
\[
\Omega(t):=(-\theta)^{\frac{-q}{q-1}} \prod_{i=1}^{\infty} \biggl(
1-\frac{t}{\theta^{q^{i}}} \biggr)\in \power{\CC_{\infty}}{t},\]
where $(-\theta)^{\frac{1}{q-1}}$ is a suitable choice of  $(q-1)$-st root of $-\theta$ so that $\frac{1}{\Omega(\theta)}=\tilde{\pi}$ (cf.~\cite{ABP04} and \cite{AT09}). The function $\Omega$ satisfies the difference equation $\Omega^{(-1)}=(t-\theta)\Omega $. One important identity established in \cite{AT90, AT09} is the following: for any positive integer $n$ and non-negative integer $i$, we have
\begin{equation}\label{E:OmegaHn}
\left(\Omega^{n} H_{n-1}\right)^{(i)}|_{t=\theta}= \frac{\Gamma_{n} S_{i}(n)}{\tilde{\pi}^{n}},
\end{equation}
where $S_{i}(n)$ is the partial sum
\[
 S_{i}(n):=\sum_{a\in A_{+,i}}\frac{1}{a^{n}}\in k.
\]
Here $A_{+,i}$ denotes by the set of all monic polynomials in $A$ with degree $i$. For any $(s_{1},\ldots,s_{r})\in \NN^{r}$ we define the following series
\begin{equation}\label{E:Lsi}
 \cL_{(s_{1},\ldots,s_{r})}(t):=\sum_{i_{1}> \cdots> i_{r}\geq 0}\left( \Omega^{s_{1}}H_{s_{1}-1} \right)^{(i_{1})}\cdots \left(\Omega^{s_{r}}H_{s_{r}-1} \right)^{(i_{r})}.
\end{equation}
Then by (\ref{E:OmegaHn}), specialization at $t=\theta$ of $\cL_{(s_{1},\ldots,s_{r})}$ gives
\[\cL_{(s_{1},\ldots,s_{r})}(\theta)=   \sum_{i_{1}> \cdots> i_{r}\geq 0}S_{i_{1}}(s_{1})\cdots S_{i_{r}}(s_{r})=\Gamma_{s_{1}}\cdots\Gamma_{s_{r}}\zeta_{A}(s_{1},\ldots,s_{r})/\tilde{\pi}^{s_{1}+\cdots+s_{r}}    .\]

We single out the following useful lemma, which is rooted in \cite[Lem.~5.3.5]{C14} (see also \cite[Prop.~2.3.3]{CPY14}).
\begin{lemma}\label{L:L theta qN}
For any $(s_{1},\ldots,s_{r})\in \NN^{r}$ and any nonnegative integer $N$, we have
\[  \cL_{(s_{1},\ldots,s_{r})}(\theta^{q^{N}})=\left(\Gamma_{s_{1}}\cdots\Gamma_{s_{r}}\zeta_{A}(s_{1},\ldots,s_{r})/\tilde{\pi}^{s_{1}+\cdots+s_{r}}    \right)^{q^{N}}    .\]
\end{lemma}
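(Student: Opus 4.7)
The plan is to reduce the evaluation at $t=\theta^{q^{N}}$ to the identity (\ref{E:OmegaHn}) at $t=\theta$ of Anderson--Thakur, by tracking the effect of Frobenius twisting on specializations.

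The core elementary observation is that, thanks to the additivity of Frobenius in characteristic $p$, any formal power series $f(t)=\sum_{j\geq 0}a_{j}t^{j}$ with $a_{j}\in\CC_{\infty}$ (for which the substitutions make sense) satisfies
\[
f(\theta)^{q^{N}}=\sum_{j}a_{j}^{q^{N}}\theta^{jq^{N}}=f^{(N)}(\theta^{q^{N}}).
\]
Applied to $f=(\Omega^{s_{k}}H_{s_{k}-1})^{(i)}$ for each $i\geq 0$, and combined with (\ref{E:OmegaHn}), this yields
\[
(\Omega^{s_{k}}H_{s_{k}-1})^{(i+N)}(\theta^{q^{N}})=\bigl[(\Omega^{s_{k}}H_{s_{k}-1})^{(i)}(\theta)\bigr]^{q^{N}}=\bigl[\Gamma_{s_{k}}S_{i}(s_{k})/\tilde{\pi}^{s_{k}}\bigr]^{q^{N}}.
\]
Going the other direction, the Euler product $\Omega(t)=(-\theta)^{-q/(q-1)}\prod_{j\geq 1}(1-t/\theta^{q^{j}})$ shows that $\Omega^{(i)}$ has zeros at precisely $t=\theta^{q^{k}}$ for $k\geq i+1$; in particular $\Omega^{(i)}(\theta^{q^{N}})=0$ whenever $0\leq i<N$, whence also $(\Omega^{s_{k}}H_{s_{k}-1})^{(i)}(\theta^{q^{N}})=0$ in that range.

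Next I would substitute these two facts into the defining series (\ref{E:Lsi}). The vanishings kill every tuple with $i_{r}<N$, and then the ordering $i_{1}>\cdots>i_{r}\geq N$ forces $i_{k}\geq N$ for every $k$. After the reindexing $j_{k}:=i_{k}-N\geq 0$, pulling out the $q^{N}$-th power using its additivity in characteristic $p$ gives
\[
\cL_{\fs}(\theta^{q^{N}})=\left(\sum_{j_{1}>\cdots>j_{r}\geq 0}\prod_{k=1}^{r}\frac{\Gamma_{s_{k}}S_{j_{k}}(s_{k})}{\tilde{\pi}^{s_{k}}}\right)^{q^{N}}=\left(\frac{\Gamma_{s_{1}}\cdots\Gamma_{s_{r}}\zeta_{A}(s_{1},\ldots,s_{r})}{\tilde{\pi}^{s_{1}+\cdots+s_{r}}}\right)^{q^{N}},
\]
which is exactly the required identity.

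The one place requiring genuine care, rather than bookkeeping, is convergence: the arguments above need the double series defining $\cL_{\fs}$, and its rearrangement after reindexing, to make analytic sense at the non-compact point $t=\theta^{q^{N}}$. This follows from the entirety of $\Omega$ and the rapid decay of its higher twists, and is already treated in the framework established in \cite[Lem.~5.3.5]{C14} and \cite[Prop.~2.3.3]{CPY14}; I would simply invoke those rather than reprove them.
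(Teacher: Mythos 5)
Your proposal is correct. Note that the paper does not actually supply its own proof of this lemma—it simply cites \cite[Lem.~5.3.5]{C14} and \cite[Prop.~2.3.3]{CPY14}—so what you have produced is a self-contained reconstruction. The technique you use (the Frobenius-twist interchange $f(\theta)^{q^{N}}=f^{(N)}(\theta^{q^{N}})$, the vanishing of $\Omega^{(i)}$ at $t=\theta^{q^{N}}$ for $i<N$ to kill the tuples with $i_{r}<N$, and the reindexing $j_{k}=i_{k}-N$ followed by pulling out the $q^{N}$-th power) is exactly the argument underlying the cited results, so in effect you are matching the paper's intended route rather than departing from it. The one small gloss I would add is that the relevant vanishing statement is that the product dies as soon as \emph{any} $i_{k}<N$; the phrasing \lq\lq kills every tuple with $i_{r}<N$\rq\rq\ is equivalent only because the strict descent $i_{1}>\cdots>i_{r}$ makes $i_{r}\geq N$ the binding condition, which you do invoke in the next clause, so no actual gap. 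Your deferral of the convergence and rearrangement issues at $t=\theta^{q^{N}}$ to \cite{C14,CPY14} is appropriate and consistent with how the paper itself treats the lemma.
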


\subsubsection{Yu's theory} In \cite{Yu91}, Yu proved the transcendence of $\zeta_{A}(n)$ for each $n\in \NN$. The key ingredient in Yu's proof is to establish the following theorem.
\begin{theorem}{\rm{(Yu~\cite[Thm.~2.3]{Yu91})}}\label{T:Yu's thm} Let $n$ be a positive integer, and $Y=(y_{1},\ldots,y_{n})^{\tr}\in \CC_{\infty}^{n}$ be a nonzero vector  satisfying that $\exp_{n}(Y)\in \bC^{\otimes n}(\ok)$. Then $y_{n}$ is transcendental over $k$.
\end{theorem}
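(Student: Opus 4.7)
My plan is to prove transcendence of $y_{n}$ by contradiction, exploiting the $\FF_{q}[t]$-module structure on $\bC^{\otimes n}$ via the classical method of auxiliary analytic functions. First I would dispose of the trivial case $\exp_{n}(Y)=0$: then $Y$ lies in the period lattice $\partial [\FF_{q}[t]]_{n}\lambda_{n}$, so $Y=\partial [a]_{n}\lambda_{n}$ for some nonzero $a\in A$, whence $y_{n}=a(\theta)\tilde{\pi}^{n}$ is transcendental since $\tilde{\pi}$ is (Wade). Accordingly I may assume $u:=\exp_{n}(Y)\in \bC^{\otimes n}(\ok)\setminus\{0\}$, and seek a contradiction from the hypothesis $y_{n}\in \ok$.

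The crux is that the $\FF_{q}[t]$-orbit of $Y$ supplies many algebraic points whose last log-coordinate remains in $\ok$. Writing $\partial [t]_{n}=\theta I_{n}+N$ with $N$ strictly upper triangular, the bottom row of $N$ vanishes, so the bottom row of $\partial [a]_{n}=a(\theta I_{n}+N)$ is exactly $(0,\dots,0,a(\theta))$ for every $a\in A$. Hence the $n$-th coordinate of $\partial [a]_{n}Y$ equals $a(\theta)y_{n}$, while the functional equation $\exp_{n}\circ \partial [a]_{n}=[a]_{n}\circ \exp_{n}$ places $\exp_{n}(\partial [a]_{n}Y)=[a]_{n}u\in \bC^{\otimes n}(\ok)$. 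Under the standing assumption $y_{n}\in \ok$, this yields an infinite family of $\ok$-rational points of $\bC^{\otimes n}$ whose selected $\exp_{n}$-preimages have bottom coordinates in the one-dimensional $\ok$-line $\ok\cdot y_{n}$.

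I would then invoke Siegel's lemma over $\ok$ to construct a nonzero polynomial $P\in \ok[X_{1},\dots,X_{n}]$ of prescribed (large) degree and controlled height vanishing to high order at each of the algebraic points $[a]_{n}u$ for $a$ ranging over a suitable finite subset $S\subset A$. The composition $F(z):=P(\exp_{n}(z))$ is then an entire function on $\CC_{\infty}^{n}$ that vanishes at the points $\partial [a]_{n}Y$ for $a\in S$, and whose growth on polydiscs of radius $q^{r}$ is controlled by the standard coefficient estimates on $\exp_{n}$.

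The main obstacle will be the extrapolation and contradiction step. Iterating the interaction between (i) analytic growth bounds on $F$ and its Frobenius twists and (ii) Liouville-type lower bounds on the heights of $[a]_{n}u$ (which grow only polynomially in $\deg_{\theta}a$ precisely because $y_{n}\in \ok$), one forces $F$ to vanish at substantially more points $\partial [a']_{n}Y$ with $a'\notin S$ than its degree and growth permit. A careful balancing of the Siegel parameters --- exploiting the disparity between the polynomial growth of heights and the $q$-power growth in the non-archimedean analytic estimates specific to positive characteristic --- then yields the required contradiction, and consequently $y_{n}$ must be transcendental over $k$.
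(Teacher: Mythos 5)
The paper does not prove this theorem; it is quoted from Yu~\cite{Yu91} as a black box, so there is no internal proof to compare against. Your broad architecture (Siegel's lemma, auxiliary polynomial, entire function $F=P\circ\exp_n$, Liouville versus analytic estimates, zero estimate) is the right genre of argument, and the reduction of the case $\exp_n(Y)=0$ to transcendence of $\tilde{\pi}$ via the explicit shape of the period lattice is correct (modulo the typo $a\in A$ where $a\in\FF_q[t]$ is meant, so that $a(\theta)\in A$).

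There is however a structural gap in the main case: the hypothesis $y_n\in\ok$ is never actually consumed. Your $P$ lies in $\ok[X_1,\dots,X_n]$, so the values $F(\partial[a']_nY)=P([a']_nu)$ lie in $\ok$ purely because $u\in\bC^{\otimes n}(\ok)$; the whole apparatus would run identically whether or not $y_n$ is algebraic, and hence cannot terminate in a contradiction with that assumption. To prove transcendence of a \emph{logarithm} coordinate one must use an auxiliary object that sees it: take $P\in\ok[X_1,\dots,X_n,W]$ and set $\widetilde F(z)=P(\exp_n(z),z_n)$, so that $\widetilde F(\partial[a]_nY)=P\bigl([a]_nu,\,a(\theta)y_n\bigr)$ lies in $\ok$ \emph{exactly because} $y_n\in\ok$. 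That extra $W$-variable (the Hermite--Lindemann template) is the place where the hypothesis enters the Siegel/Liouville/extrapolation loop; omitting it leaves the proof with nothing to contradict. Two secondary issues: you write $\deg_\theta a$ for $a\in\FF_q[t]$, a typo for $\deg_t a$; and the claim that heights of $[a]_nu$ grow only polynomially is false --- since $[t]_n$ has $\tau$-degree one, applying $[a]_n$ raises coordinates to $q^{\deg_t a}$-th powers and the logarithmic heights grow like $q^{\deg_t a}$. The method survives this because the ultrametric Schwarz gain is also exponential, but the balancing principle as you state it is incorrect, and the terminating step additionally requires an explicit zero estimate for the $\FF_q[t]$-orbit inside $\bC^{\otimes n}$, which you only gesture at.
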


\begin{remark}\label{Rem:TorsionEven}
Combining Theorems~\ref{T:AT90} and \ref{T:Yu's thm}, one can show (see~\cite[Thm.~3.2]{Yu91}):
\begin{enumerate}
\item[$\bullet$] $\zeta_{A}(n)/\tilde{\pi}^{n}\in k$ if and only if $\bv_{n}$ is an $\FF_{q}[t]$-torsion point in $\bC^{\otimes n}(A)$.

\item[$\bullet$] $\bv_{n}$ is an $\FF_{q}[t]$-torsion point if and only if $n$ is divisible by $q-1$.
\end{enumerate}
\end{remark}

\subsection{Review of the CPY criterion for Eulerian MZV's}
In what follows, by a {\it{Frobenius}} module we mean a left $\ok[t,\sigma]$-module that is free of finite rank over $\ok[t]$, where $\ok[t,\sigma]:=\ok[t][\sigma]$ is the twisted polynomial ring generated by $\sigma$ over $\ok[t]$ subject to the relation $\sigma f =f^{(-1)}\sigma$ for $f\in \ok[t]$. Morphisms of Frobenius modules are defined to be left $\ok[t,\sigma]$-module homomorphisms and we denote by $\cF$ the category of Frobenius modules.

In what follows, an object $M$ in $\cF$ is said to be defined by a matrix $\Phi\in \Mat_{r}(\ok[t])$ if $M$ is free of rank $r$ over $\ok[t]$ and the $\sigma$-action on a given $\ok[t]$-basis of $M$ is represented by the matrix $\Phi$. We denote by ${\bf{1}}$ the {\it{trivial}} object in $\cF$, where the underlying space of ${\bf{1}}$ is $\ok[t]$ and on which $\sigma$ acts as $\sigma f=f^{(-1)}$  for $f\in {\bf{1}}$. We further denote by $C^{\otimes n}$ the $n$th tensor power of the Carlitz motive for $n\in \NN$. The underlying space of $C^{\otimes n}$ is $\ok[t]$, and on which $\sigma$ acts by $\sigma f:=(t-\theta)^{n} f^{(-1)}$ for $f\in C^{\otimes n}$.

 By an {\it{Anderson $t$-motive}} we mean an object $M'\in \cF$ that possesses the following properties.
\begin{enumerate}
\item[$\bullet$] $M'$ is also a free left $\ok[\sigma]$-module of finite rank.
\item[$\bullet$] $\sigma M'\subseteq (t-\theta)^{n}M'$ for all sufficiently large integers $n$.
\end{enumerate}
Here we follow the terminology of Anderson $t$-motives in \cite{P08} (cf.~\cite{A86, ABP04}).

For a fixed Anderson $t$-motive $M'$ of rank $d$ over $\ok[\sigma]$, we are interested in $\Ext_{\cF}^{1}\left( {\bf{1}},M' \right)$, the set of equivalence classes of Frobenius modules $M$ fitting into a short exact sequence of Frobenius modules
\[  0\rightarrow M' \hookrightarrow M \twoheadrightarrow {\bf{1}}\rightarrow 0 ,\] and we denote by $[M]$ the equivalence class of $M$ in $\Ext_{\cF}^{1}\left( {\bf{1}},M' \right)$. Since $\FF_{q}[t]$ is contained inside the center of $\ok[t,\sigma]$, left multiplication by any element of $\FF_{q}[t]$ on $M'$ is a morphism and hence $\Ext_{\cF}^{1}\left( {\bf{1}},M' \right)$ has a natural $\FF_{q}[t]$-module structure coming from Baer sum and pushout of morphisms of $M'$. The following is a review of the $\FF_{q}[t]$-module isomorphisms established by Anderson
\[ \Ext_{\cF}^{1}({\bf{1}},M')\cong M'/(\sigma-1)M'\cong E'(\ok), \]where
\begin{equation}\label{E:E',rho}
E'=(\GG_{a}^{d},\rho)
\end{equation}
 is the $t$-module over $\ok$ associated to $M'$ in the sense that the $\ok$-valued points of $E'$ is isomorphic to $M'/(\sigma-1)M'$ as $\FF_{q}$-vector spaces and the $\FF_{q}[t]$-module structure on $E'$ via $\rho$ is induced by the $\FF_{q}[t]$-action on $M'/(\sigma-1)M'$. For a detailed description of the isomorphisms above, see~\cite[\S~5.2]{CPY14}. We also refer the reader to \cite{S97, PR03, HP04, Ta10, BP, CP12, HJ16} for related discussions.

For example, let $n$ be a positive integer. Then the $n$th tensor power of Carlitz motive $C^{\otimes n}$ has a $\ok[\sigma]$-basis $\left\{ (t-\theta)^{n-1},\ldots,(t-\theta),1\right\}$ and every $f\in C^{\otimes n}/(\sigma-1)C^{\otimes n}$ has a unique representative polynomial (with degree $\leq n-1$) of the form $u_{1}(t-\theta)^{n-1}+\cdots+u_{n}\in \ok[t]$. Then the maps above can be characterized as
\begin{equation}\label{E:Isom C otimes n}
     \begin{array}{ccccc}
   \Ext_{\cF}^{1}({\bf{1}},C^{\otimes n}) &\cong &  C^{\otimes n}/(\sigma-1)C^{\otimes n}  & \cong & \bC^{\otimes n}(\ok) \\
    \left[M_{f}\right]  &\mapsto & f & \mapsto & (u_{1},\ldots,u_{n})^{\tr},\\
     \end{array}
  \end{equation}
  where $M_{f}\in \cF$ is defined by the matrix

\begin{equation}\label{E:Phi f}
\Phi_{f}:=\left(
    \begin{array}{cc}
      (t-\theta)^{n} & 0 \\
      f^{(-1)}(t-\theta)^{n} & 1 \\
    \end{array}
  \right)\in \Mat_{2}(\ok[t] ).
\end{equation}

\begin{remark}\label{Rem:special point}
For $n\in \NN$, we note that $H_{n-1}^{(-1)}(t-\theta)^{n}=H_{n-1}+(\sigma-1)H_{n-1}$ in $C^{\otimes n}$ and so
\[ H_{n-1}^{(-1)}(t-\theta)^{n}\equiv H_{n-1}\in C^{\otimes n}/(\sigma-1)C^{\otimes n}. \] The special point $\bv_{n}$ given in Theorem~\ref{T:AT90} is
defined to be the image of  $H_{n-1}^{(-1)}(t-\theta)^{n}$ under the isomorphism $C^{\otimes n}/(\sigma-1)C^{\otimes n}\cong \bC^{\otimes n}(\ok)$. For the  details, see \cite[p.~26]{CPY14}.
\end{remark}

 Let $Z$ be an MZV of weight $w$. Following \cite{T04}, we say that $Z$ is {\it{Eulerian}} if the ratio $Z/\tilde{\pi}^{w}$ is in $k$. In~\cite{CPY14}, an effective criterion for Eulerian MZV's is established and we describe it as follows. Let $r$ be a positive integer and fix an $r$-tuple $\fs=(s_{1},\ldots,s_{r})\in \NN^{r}$. We define the matrix $\Phi_{\fs} \in \Mat_{r+1}(\ok[t])$,
\begin{equation}\label{E:Phi s}
\Phi_{\fs} :=
               \begin{pmatrix}
                (t-\theta)^{s_{1}+\cdots+s_{r}}  & 0 & 0 &\cdots  & 0 \\
                H_{s_{1}-1}^{(-1)}(t-\theta)^{s_{1}+\cdots+s_{r}}  & (t-\theta)^{s_{2}+\cdots+s_{r}} & 0 & \cdots & 0 \\
                 0 &H_{s_{2}-1}^{(-1)} (t-\theta)^{s_{2}+\cdots+s_{r}} &  \ddots&  &\vdots  \\
                 \vdots &  & \ddots & (t-\theta)^{s_{r}} & 0 \\
                 0 & \cdots & 0 & H_{s_{r}-1}^{(-1)}(t-\theta)^{s_{r}} & 1 \\
               \end{pmatrix}.
\end{equation}
Define $ \Phi'_{\fs}\in \Mat_{r}(\ok[t])$ to be the square matrix of size $r$ cut off from the upper left square of $\Phi_{\fs}$:
\begin{equation}\label{E:Phi s'}
\Phi_{\fs}' :=
\begin{pmatrix}
                (t-\theta)^{s_{1}+\cdots+s_{r}}  &  &  &  \\
                H_{s_{1}-1}^{(-1)}(t-\theta)^{s_{1}+\cdots+s_{r}}  & (t-\theta)^{s_{2}+\cdots+s_{r}}   &  &  \\
                  & \ddots & \ddots &  \\
                  &  & H_{s_{r-1}-1}^{(-1)}(t-\theta)^{s_{r-1}+s_{r}} & (t-\theta)^{s_{r}}  \\
\end{pmatrix}.
\end{equation}

Define
\begin{equation}\label{E:Psi}
\Psi_{\fs}:=\left(
              \begin{array}{cccccc}
               \Omega^{s_{1}+\cdots+s_{r}}  &  &  &  &  &  \\
                \Omega^{s_{2}+\cdots+s_{r}}\cL_{s_{1}} & \Omega^{s_{2}+\cdots+s_{r}} &  &  &  &  \\
                \Omega^{s_{3}+\cdots+s_{r}}\cL_{(s_{1},s_{2})} &\Omega^{s_{3}+\cdots+s_{r}} \cL_{s_{2}} & \ddots &  &  &  \\
                \vdots & \vdots & \ddots & \Omega^{s_{r-1}+s_{r}} &  &  \\
                \Omega^{s_{r}}\cL_{(s_{1},\ldots,s_{r-1})} & \Omega^{s_{r}}\cL_{(s_{2},\ldots,s_{r-1})} &  & \Omega^{s_{r}} \cL_{s_{r-1}}& \Omega^{s_{r}} &  \\
                \cL_{(s_{1},\ldots,s_{r})} & \cL_{(s_{2},\ldots,s_{r})} & \cdots &\cL_{(s_{r-1},s_{r})}  & \cL_{s_{r}} &  1\\
              \end{array}
            \right)\in \GL_{r+1}(\TT)
\end{equation}
and let

\begin{equation}\label{E:Psi'}
\Psi_{\fs}':=\left(
              \begin{array}{ccccc}
               \Omega^{s_{1}+\cdots+s_{r}}  &  &  &  &    \\
                \Omega^{s_{2}+\cdots+s_{r}}\cL_{s_{1}} & \Omega^{s_{2}+\cdots+s_{r}} &  &  &    \\
                \Omega^{s_{3}+\cdots+s_{r}}\cL_{(s_{1},s_{2})} &\Omega^{s_{3}+\cdots+s_{r}} \cL_{s_{2}} & \ddots &  &    \\
                \vdots & \vdots & \ddots & \Omega^{s_{r-1}+s_{r}} &    \\
                \Omega^{s_{r}}\cL_{(s_{1},\ldots,s_{r-1})} & \Omega^{s_{r}}\cL_{(s_{2},\ldots,s_{r-1})} &  & \Omega^{s_{r}} \cL_{s_{r-1}}& \Omega^{s_{r}}   \\
              \end{array}
            \right)
   \in \GL_{r}(\TT)
\end{equation} be the square matrix cut off from the upper left square of $\Psi_{\fs}$. Then we have that
\begin{equation}\label{E:DiffPsiPsi'}
 \Psi_{\fs}'^{(-1)}=\Phi_{\fs}' \Psi_{\fs}'\hbox{ and }\Psi_{\fs}^{(-1)}=\Phi_{\fs}\Psi_{\fs}
\end{equation}
 (see \cite{AT09, C14, CPY14}).

We let $M_{\fs}$ (resp. $M_{\fs}'$) be the Frobenius module defined by the matrix $\Phi_{\fs}$ (resp. $\Phi_{\fs}'$). Then $M_{\fs}$ represents a class in $ \Ext_{\cF}^{1}\left({\bf{1}},M_{\fs}' \right)\cong M_{\fs}'/(\sigma-1)M_{\fs}'\cong E_{\fs}'(\ok)$ and we define $\bv_{\fs}\in E_{\fs}'(\ok)$ to be the image of $[M_{\fs}]$ under the composition of the isomorphisms above. Note that it is shown in \cite[Thm.~5.3.4]{CPY14} that actually $E_{\fs}'$ is defined over $A$ and $\bv_{\fs}$ is an integral point in $E_{\fs}'(A)$.

The criterion of Chang-Papanikolas-Yu for Eulerian MZV's is as follows.
\begin{theorem}{\rm{\cite[Thms.~5.3.5 and 6.1.1]{CPY14}}}\label{T:CPY-criterion}
For each $r$-tuple $\fs=(s_{1},\ldots,s_{r})\in \NN^{r}$, let $E_{\fs}'$ and $\bv_{\fs}$ be defined as above. Then we explicitly construct a polynomial $a_{\fs}\in \FF_{q}[t]$ so that $\zeta_{A}(\fs)$ is Eulerian if and only if $\bv_{\fs}$ is an $a_{\fs}$-torsion point in $E_{\fs}'(A)$.
\end{theorem}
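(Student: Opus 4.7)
The plan is to deploy Papanikolas' $t$-motivic Galois theory together with the ABP transcendence criterion to translate the Eulerian condition on $\zeta_A(\fs)$ into a splitting/torsion condition on the extension class $[M_\fs] \in \Ext^1_\cF({\bf 1}, M'_\fs)$. The crucial link is that the bottom-left entry of $\Psi_\fs$ at $t=\theta$ equals $\Gamma_{s_1}\cdots\Gamma_{s_r}\zeta_A(\fs)/\tilde\pi^w$ (where $w = s_1+\cdots+s_r$), and $1/\tilde\pi = \Omega(\theta)$. So ``$\zeta_A(\fs)$ Eulerian'' is literally the statement that this specific entry of $\Psi_\fs(\theta)$ is $k$-rational, and we want to convert this scalar statement into structural information about $M_\fs$.

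For the forward direction, assume $\zeta_A(\fs)/\tilde\pi^w \in k$. This gives a nontrivial $k$-linear relation among the entries of the last row of $\Psi_\fs(\theta)$. I would feed this relation into the ABP criterion (\cite[Thm.~3.1.1]{ABP04}) to lift it to a $\ok[t]$-linear relation among the corresponding entries of $\Psi_\fs$ as rigid analytic functions on the closed unit disc. Using the difference equation $\Psi_\fs^{(-1)} = \Phi_\fs \Psi_\fs$, this lifted relation translates into a morphism of Frobenius modules $M_\fs \to M'_\fs$ splitting the short exact sequence $0 \to M'_\fs \to M_\fs \to {\bf 1} \to 0$ after multiplication by a denominator $a_\fs \in \FF_q[t]$. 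Under the isomorphism $\Ext^1_\cF({\bf 1}, M'_\fs) \cong E'_\fs(\ok)$, this says precisely that $a_\fs \cdot \bv_\fs = 0$, i.e., $\bv_\fs$ is $a_\fs$-torsion. For the converse, assume $a_\fs \cdot \bv_\fs = 0$, so $a_\fs\cdot [M_\fs] = 0$ in $\Ext^1$. Realize this vanishing as an explicit block-diagonalization $\gamma \Phi_\fs \gamma^{(-1)} = \mathrm{diag}(\Phi'_\fs, 1)$ after a suitable $\ok[t]$-gauge transformation $\gamma$ corresponding to $a_\fs$; transporting $\gamma$ through $\Psi_\fs$ and evaluating at $\theta$ yields an identity showing that $\zeta_A(\fs)/\tilde\pi^w$ lies in $k$, proving Eulerian.

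The main obstacle is producing the polynomial $a_\fs$ explicitly and effectively, so that it can be algorithmically computed from $\fs$. The denominator extracted in the ABP-lifting step is only implicitly defined, and one must track the Frobenius eigenvalues on $M'_\fs$, the structure of the Anderson--Thakur polynomials $H_{s_i-1}$, and how pushout by $[a_\fs]$ on $E'_\fs$ clears these denominators. A promising strategy is to work with the rigid analytic $(-1)$-twist of the splitting gauge and localize at the roots of the diagonal entries $(t-\theta)^{s_i+\cdots+s_r}$, producing a canonical candidate for $a_\fs$ built from products of cyclotomic-type polynomials $\prod_{j}(t-\theta^{q^j})$; one then verifies both directions of the equivalence against this candidate. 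A secondary difficulty is checking that $\bv_\fs$ actually lies in $E'_\fs(A)$ rather than merely $E'_\fs(\ok)$, which requires the rationality result \cite[Thm.~5.3.4]{CPY14} already cited.
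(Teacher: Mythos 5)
This theorem is cited from \cite[Thms.~5.3.5 and 6.1.1]{CPY14} and is \emph{not} reproved in the present paper, so there is no in-paper proof to compare yours against; I will evaluate your proposal on its own terms and against what is known of the CPY argument.

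Your ABP--$\Ext^{1}$ skeleton is the right overall framework (Frobenius difference equations $\Psi_\fs^{(-1)} = \Phi_\fs\Psi_\fs$, ABP lifting of the $k$-rational relation coming from Eulerianness, the identification $\Ext^1_\cF({\bf 1},M'_\fs)\cong E'_\fs(\ok)$, gauge transformations realizing vanishing of the class). That much is consistent with CPY14.

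However, there is a genuine gap in the piece you yourself flag as the crux: the \emph{explicit} construction of $a_\fs$. Your proposed candidate, ``products of cyclotomic-type polynomials $\prod_{j}(t-\theta^{q^j})$,'' cannot be right: those polynomials lie in $\ok[t]$, not in $\FF_q[t]$, whereas $a_\fs$ must lie in $\FF_q[t]$ so that $[a_\fs]_n$ defines an $\FF_q[t]$-module endomorphism of $E'_\fs$. More importantly, the strategy of ``extracting a denominator from the ABP step'' is logically inverted relative to what the theorem asserts: the denominator that ABP produces is only available \emph{under the hypothesis} that $\zeta_A(\fs)$ is Eulerian and is not canonical; the theorem needs $a_\fs$ constructed \emph{a priori}, independently of whether Eulerianness holds, so that the biconditional is decidable. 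In CPY14 (and as transcribed in Step~I-5 of the algorithm in \S\S\ref{sec:algorithm} of this paper), $a_\fs$ is obtained from the torsion structure of the Carlitz tensor powers: one factors the relevant weight as $s = p^{\ell}n_1(q^h-1)$ with $p\nmid n_1$ and $h$ maximal, and sets $a_\fs$ to be (a product built from) $(t^{q^h}-t)^{p^{\ell}}$. The logical backbone is then: Eulerian $\Rightarrow$ $\bv_\fs$ torsion (ABP), $\bv_\fs$ torsion $\Rightarrow$ $\bv_\fs$ is $a_\fs$-torsion (this is the content you are missing: it requires knowing the exact annihilator of the Anderson--Thakur special points $\bv_n\in\bC^{\otimes n}(A)$, cf. Remark~\ref{Rem:TorsionEven}, which is a computation about Carlitz torsion modules rather than a rigid-analytic localization), and $a_\fs$-torsion $\Rightarrow$ Eulerian by specialization of the corresponding difference-equation identity at $t=\theta$. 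Without the middle implication, your argument establishes only ``Eulerian iff torsion,'' not the effective version with a single explicit $a_\fs$, which is precisely what makes the theorem usable in the algorithm.
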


\section{Step~I: A necessary condition}
\subsection{The formulation}
In this section, our main goal is to show the following necessary condition, which will be applied to compute the dimension of double zeta values.
\begin{theorem}\label{T:NecCond}
Let $n\geq 2$ be an integer and for $i=1,\ldots,m$, let $\fs_{i}=(s_{i1},s_{i2})\in \NN^{2}$ be chosen with $s_{i1}+s_{i2}=n$. Suppose that we have
\begin{equation}\label{E:c_i}
c_{0} \zeta_{A}(n)+ \sum_{i=1}^{m} c_{i}\zeta_{A}(\fs_{i})=0\hbox{ for some }c_{0},c_{1},\ldots,c_{m}\in k.
\end{equation}
If the coefficient $c_{j}$ is nonzero for some $1\leq j\leq m$, then we have that $(q-1)|s_{j2}$. In other words, all $k$-linear relations among  $\left\{\zeta_{A}(n), \zeta_{A}(\fs_{1}),\ldots,\zeta_{A}(\fs_{m})\right\}$ are those coming from the $k$-linear relations among $\left\{\zeta_{A}(n)\right\} \cup \left\{ \zeta_{A}(\fs_{j}); (q-1)|s_{j2} \right\}$.
\end{theorem}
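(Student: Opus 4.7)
The plan is to apply the ABP-criterion \cite[Thm.~3.1.1]{ABP04} to a single Frobenius module that packages all the double zeta values appearing in \eqref{E:c_i}, and then project onto the ``inner'' depth-one Carlitz subquotients to isolate each coefficient $c_j$. The underlying principle is that a $k$-linear relation among $t$-motivic periods lifts to a $\ok[t]$-linear relation among the corresponding motivic elements, which in turn forces certain associated extension classes to vanish.

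After clearing denominators we may assume each $c_i \in A$, and we may assume without loss of generality that the $\fs_i$ are distinct; since $s_{i1}+s_{i2}=n$ is fixed, the inner indices $s_{i2}$ are then mutually distinct. Multiplying \eqref{E:c_i} through by $\tilde\pi^n$ and invoking the identities $\cL_{\fs_i}(\theta) = \Gamma_{s_{i1}}\Gamma_{s_{i2}}\zeta_A(\fs_i)/\tilde\pi^n$ and $\cL_{n}(\theta) = \Gamma_n \zeta_A(n)/\tilde\pi^n$, the relation becomes a $\ok$-linear dependence among specializations at $t=\theta$ of certain entries of the period matrices $\Psi_{\fs_i}$ in \eqref{E:Psi}. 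I would assemble the Anderson $t$-motives $M'_{\fs_i}$ defined by $\Phi'_{\fs_i}$ of \eqref{E:Phi s'} into an amalgamated direct sum $\widetilde M'$, glued along the common top Carlitz piece $C^{\otimes n}$; then Baer-summing the extension classes $[M_{\fs_i}]$ weighted by the coefficients $c_0, c_1, \ldots, c_m$ yields a single class $[\widetilde M] \in \Ext^1_\cF(\mathbf{1}, \widetilde M')$ whose associated period expression recovers precisely the left-hand side of \eqref{E:c_i}.

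Applying ABP to the block-triangular defining matrix $\widetilde\Phi$ of $\widetilde M$, the vanishing relation lifts to a $\ok[t]$-linear dependence on the rows of the period matrix $\widetilde\Psi$, and hence, after reading off the lifted relation, forces the class $[\widetilde M]$ itself to be trivial in $\Ext^1_\cF(\mathbf{1}, \widetilde M')$. Projecting onto the canonical quotient $\widetilde M / C^{\otimes n}$ and then onto the direct summand labeled by $s_{j2}$ (which is well-defined because the inner indices are pairwise distinct and because the $\zeta_A(n)$-term sits in the top Carlitz piece $C^{\otimes n}$, not in any $C^{\otimes s_{j2}}$), one obtains for each $j$ an induced extension class
\[
[E_j] \in \Ext^1_\cF(\mathbf{1}, C^{\otimes s_{j2}})
\]
whose image in $\bC^{\otimes s_{j2}}(\ok)$ equals $C_j(t) \cdot \bv_{s_{j2}}$ for some $C_j(t) \in \FF_q[t]$ with $C_j(\theta)$ a nonzero scalar multiple of $c_j$.

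The triviality of $[\widetilde M]$ forces $[E_j]=0$, equivalently $C_j(t) \cdot \bv_{s_{j2}} = 0$ in $\bC^{\otimes s_{j2}}(A)$. If $c_j \neq 0$ then $C_j(t) \not\equiv 0$, so $\bv_{s_{j2}}$ must be an $\FF_q[t]$-torsion point in $\bC^{\otimes s_{j2}}(A)$, and Remark~\ref{Rem:TorsionEven} gives $(q-1) \mid s_{j2}$. The main technical obstacle will be the explicit construction of $\widetilde M'$ and $\widetilde M$---producing a block-triangular $\Phi$-matrix whose associated period matrix has an entry at $t=\theta$ equal, up to a nonzero scalar, to the left-hand side of \eqref{E:c_i}---together with the careful bookkeeping needed to track the Baer-sum and the successive quotient/projection operations through the block structure, so as to certify that $C_j(\theta)$ is a nonzero multiple of $c_j$ and that the projection cleanly isolates $C^{\otimes s_{j2}}$ from all the other inner Carlitz pieces.
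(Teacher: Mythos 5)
Your proposal follows essentially the same route as the paper: build a single block-lower-triangular Frobenius matrix $\widetilde\Phi$ whose period column vector has last entry $\tilde\pi^{-n}\bigl(c_{0}\zeta_{A}(n)+\sum_i c_{i}\zeta_{A}(\fs_{i})\bigr)$, apply the ABP criterion, deduce triviality of the resulting class in $\Ext^{1}_{\cF}(\mathbf{1},\widetilde M')$, then push through the short exact sequence $0\to C^{\otimes n}\to\widetilde M'\to\bigoplus_i C^{\otimes s_{i2}}\to 0$ to land on $C_{j}(t)\cdot\bv_{s_{j2}}=0$, and finally invoke Remark~\ref{Rem:TorsionEven}. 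This is indeed the paper's architecture.

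However, there is a genuine gap at the central step, and you have misidentified where the real difficulty lies. You write that the ABP output ``forces the class $[\widetilde M]$ itself to be trivial'' and then flag the construction/bookkeeping as the ``main technical obstacle.'' In fact the construction is routine; the hard point is exactly the passage you wave past. The ABP criterion produces a row vector $\mathbf{f}\in\Mat_{1\times(m+2)}(\ok[t])$ with $\mathbf{f}\psi=0$ and $\mathbf{f}(\theta)=(0,\dots,0,1)$, but this alone does \emph{not} yield a morphism of Frobenius modules and hence does not by itself trivialize the extension. One has to normalize $\tilde{\mathbf{f}}:=f_{m+1}^{-1}\mathbf{f}$, take the Frobenius twist-and-subtract to form $(R_{0},\dots,R_{m},0):=\tilde{\mathbf{f}}-\tilde{\mathbf{f}}^{(-1)}\widetilde\Phi$, and then \emph{prove} that $R_{0}=\cdots=R_{m}=0$. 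That vanishing is the crux of the proof, and it is not formal: the paper establishes it by isolating the relation $R_{0}\Omega^{n}+\sum_i R_{i}\Omega^{s_{i2}}\cL_{s_{i1}}=0$, ordering the $s_{i2}$, dividing by the smallest power of $\Omega$, specializing at $t=\theta^{q^{N}}$ for $N\gg 0$ (where $\Omega$ has simple zeros), and invoking Thakur's non-vanishing of MZV's to kill $R_{m}$, then iterating. Without this argument---which uses the particular shape of $\psi$ and a nontrivial input from analysis---you have no bridge from the ABP output to the extension class being trivial. A secondary, more cosmetic point: after normalization the entries $\tilde f_{i}$ are only in $\ok(t)$, and one must clear denominators by a common $b\in\FF_{q}[t]$ (not merely $\ok[t]$, so that $b$ is $\sigma$-invariant and $b*$ makes sense on $\Ext^{1}$); it is therefore $[b*\widetilde M]$, not $[\widetilde M]$, that is shown to vanish. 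This does not damage the conclusion since $b\neq 0$, but your write-up should say so. Finally, your framing via ``Baer-summing the classes $[M_{\fs_i}]$'' is imprecise as stated, since those classes live in different $\Ext^{1}$ groups with different coefficient motives $M'_{\fs_i}$; one must first push them into the amalgamated $\widetilde M'$ before any Baer sum is meaningful. The paper sidesteps this by constructing $\widetilde\Phi$ and $\psi$ directly, which is cleaner.
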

\begin{proof}

Note that each double zeta value $\zeta_{A}(\fs_{i})$ is associated to the system of Frobenius difference equations
\begin{equation}\label{E:DiffPhi_i}
\Psi_{i}^{(-1)}=\Phi_{i} \Psi_{i},
\end{equation}
 where
\[\Phi_{i}:=\left(
               \begin{array}{ccc}
                (t-\theta)^{n}  & 0 & 0 \\
                H_{s_{i1}-1}^{(-1)}(t-\theta)^{n}  & (t-\theta)^{s_{i2}} & 0 \\
                 0 & H_{s_{i2}-1}^{(-1)}(t-\theta)^{s_{i2}} & 1 \\
               \end{array}
             \right)\]
  and
  \[ \Psi_{i}:=\left(
                 \begin{array}{ccc}
                   \Omega^{n}&0&0 \\
                   \Omega^{s_{i2}}\cL_{21}^{[i]}& \Omega^{s_{i2}}&0 \\
                   \cL_{31}^{[i]}& \cL_{32}^{[i]} &1  \\
                 \end{array}
               \right).
     \]Here for each $1\leq i\leq m$,
     \[ \cL_{21}^{[i]}:=\cL_{s_{i1}} =\sum_{\ell=0}^{\infty} \left( \Omega^{s_{i1}} H_{s_{i1}-1} \right)^{(\ell)}\in \TT     \]
and \[\cL_{31}^{[i]}:=\cL_{(s_{i1},s_{i2})}:=\sum_{\ell_{1}>\ell_{2}\geq 0}\left( \Omega^{s_{i2}}H_{s_{i2}-1} \right)^{(\ell_{2})}\left(\Omega^{s_{i1}}H_{s_{i1}-1} \right)^{(\ell_{1})} \in \TT, \]which
satisfy for any integer $N\in \ZZ_{\geq 0}$,
\begin{equation}\label{E:formulaL21L31}
 \cL_{21}^{[i]}(\theta^{q^{N}})=\left(\zeta_{A}(s_{i1})/\tilde{\pi}^{n}\right)^{q^{N}} {\hbox{ and }} \cL_{31}^{[i]}(\theta^{q^{N}})=\left(\zeta_{A}(s_{i1},s_{i2})/\tilde{\pi}^{n}\right)^{q^{N}}\hbox{ (see~Lemma~\ref{L:L theta qN})} .
\end{equation}

Moreover, $\zeta_{A}(n)$ is associated to the system of Frobenius difference equations

\begin{equation}\label{E:DiffZeta(n)}
 \left(
     \begin{array}{c}
       \Omega^{n} \\
       \cL_{n} \\
     \end{array}
   \right)^{(-1)}=\left(
                    \begin{array}{cc}
                      (t-\theta)^{n} & 0 \\
                      H_{n-1}^{(-1)}(t-\theta)^{n} & 1 \\
                    \end{array}
                  \right)\left(
                           \begin{array}{c}
                             \Omega^{n} \\
                             \cL_{n} \\
                           \end{array}
                         \right),
   \end{equation} where
   \[ \cL_{n}:=\sum_{i=0}^{\infty}\left(\Omega^{n}H_{n-1} \right)^{(i)}\in \TT  \]
has the property that for $N\in \ZZ_{\geq 0}$,
\[  \cL_{n}(\theta^{q^{N}})=\left(\Gamma_{n}\zeta_{A}(n)/\tilde{\pi}^{n} \right)^{q^{N}}  {\hbox{(see~Lemma~\ref{L:L theta qN})}} .\]

 To prove this theorem, without loss of generality we assume that each coefficient $c_{i}\neq 0$ for $i=1,\ldots,m$. Multiplying the equation (\ref{E:c_i}) by a suitable element in $\FF_{q}[\theta]$, without loss of generality we may assume that $a_{i}:=\frac{c_{i}}{\Gamma_{s_{i1}}\Gamma_{s_{i2}}}|_{\theta=t}$ and $a_{0}:=\frac{c_{0}}{\Gamma_{n}}|_{\theta=t}$ are in $\FF_{q}[t]$ for $i=1,\ldots,m$. Note that $a_{1}\neq 0,\ldots,a_{m}\neq 0$. So we have that
\[ a_{0}(\theta)\Gamma_{n}\zeta_{A}(n)+ \sum_{i=1}^{m} a_{i}(\theta)\Gamma_{s_{i1}}\Gamma_{s_{i2}} \zeta(\fs_{i})=0.  \]
Define
\[ \Phi:=\left(
           \begin{array}{ccccc}
             (t-\theta)^{n} &  &  &  &  \\
             H_{s_{11}-1}^{(-1)}(t-\theta)^{n} & (t-\theta)^{s_{12}} &  &  &  \\
             \vdots &  & \ddots &  &  \\
             H_{s_{m1}-1}^{(-1)}(t-\theta)^{n} &  &  & (t-\theta)^{s_{m2}} &  \\
             a_{0}H_{n-1}^{(-1)}(t-\theta)^{n} & a_{1}H_{s_{12}-1}^{(-1)}(t-\theta)^{s_{12}} & \cdots & a_{m}H_{s_{m2}-1}^{(-1)}(t-\theta)^{s_{m2}} & 1 \\
           \end{array}
         \right)\in \Mat_{m+2}(\ok[t])
  \] and
  \[  \psi:= \left(
               \begin{array}{c}
                 \Omega^{n} \\
                 \Omega^{s_{12}}\cL_{21}^{[1]} \\
                 \vdots \\
                 \Omega^{s_{m2}}\cL_{21}^{[m]} \\
                 a_{0}\cL_{n}+\sum_{i=1}^{m}a_{i}\cL_{31}^{[i]} \\
               \end{array}
             \right)\in \Mat_{(m+2)\times 1}(\TT)
   .\] Using (\ref{E:DiffPhi_i}) and (\ref{E:DiffZeta(n)}) one has $\psi^{(-1)}=\Phi \psi$.

By hypothesis we have \[\left( a_{0}\cL_{n}+\sum_{i=1}^{m}a_{i}\cL_{31}^{[i]}\right) (\theta)= \frac{c_{0}\zeta_{A}(n)+\sum_{i=1}^{m}c_{i}\zeta_{A}(\fs_{i}) }{ \tilde{\pi}^{n} } =0.\]It follows from the ABP-criterion~\cite[Thm.~3.1.1]{ABP04} that there exists ${\bf{f}}=(f_{0},f_{1},\ldots,f_{m},f_{m+1})\in \Mat_{1 \times (m+2)}(\ok[t])$ so that
\[ {\bf{f}}\psi=0\hbox{ and }{\bf{f}}(\theta)=(0,\ldots,0,1)     .\]
Put $\tilde{\bf{f}}:=\frac{1}{f_{m+1}}{\bf{f}}$ and note that $\tilde{\bf{f}}\psi=0$. We take the $(-1)$-fold Frobenius twist of the equation $\tilde{\bf{f}}\psi=0$ and then subtract it from itself, and then we have that
\[ (\tilde{\bf{f}}-\tilde{\bf{f}}^{(-1)}\Phi)\psi=0.   \]
Note that the last coordinate of the vector $\tilde{\bf{f}}-\tilde{\bf{f}}^{(-1)}\Phi$ is zero. Define
\[ (R_{0},R_{1},\ldots,R_{m},0):=\tilde{\bf{f}}-\tilde{\bf{f}}^{(-1)}\Phi \]
and note that
\begin{equation}\label{E:Ri}
  \begin{array}{rl}
    R_{0}:= & \tilde{f_{0}}-\tilde{f_{0}}^{(-1)}(t-\theta)^{n}-\sum_{i=1}^{m}\tilde{f_{i}}^{(-1)} H_{s_{i1}-1}^{(-1)}(t-\theta)^{n}-a_{0}H_{n-1}^{(-1)}(t-\theta)^{n}\\
    R_{1}:= & \tilde{f_{1}}-\tilde{f_{1}}^{(-1)}(t-\theta)^{s_{12}}-a_{1}H_{s_{12}}^{(-1)}(t-\theta)^{s_{12}} \\
     \vdots&  \\
    R_{m}:= & \tilde{f_{m}}-\tilde{f_{m}}^{(-1)}(t-\theta)^{s_{m2}}-a_{m}H_{s_{m2}-1}^{(-1)}(t-\theta)^{s_{m2}}, \\
  \end{array}
\end{equation} where $(\tilde{f_{1}},\cdots,\tilde{f_{m}},0):=\tilde{\bf{f}}$.
We claim that $R_{0}=R_{1}=\cdots=R_{m}=0$.

Assume this claim first. Put
\[\gamma:= \left(
    \begin{array}{ccccc}
      1 &  &  &  &  \\
       & 1 &  &  &  \\
       &  & \ddots &  &  \\
       &  &  & 1 &  \\
     \tilde{f_{0}}  & \tilde{f_{1}} &\ldots  & \tilde{f_{m}} & 1 \\
    \end{array}
  \right)
    \] and note that the claim above implies the following difference equations
    \[\gamma^{(-1)} \Phi=\left(
                    \begin{array}{cc}
                      \Phi' &  \\
                       & 1 \\
                    \end{array}
                  \right)\gamma
       ,\] where $\Phi'$ is the square matrix of size $m+1$ cut off from the upper left square of $\Phi$.

       By \cite[Prop.~2.2.1]{CPY14} the rational functions $\tilde{f_{0}},\ldots,\tilde{f_{m}}$ have a (nonzero) common denominator $b\in \FF_{q}[t]$ so that
$b\tilde{f_{i}}\in \ok[t]$ for $i=0,1,\ldots,m$. Since $b^{(-1)}=b$, multiplication by $b$ on the both sides of (\ref{E:Ri}) shows that if we put
\[ \delta:= \left(
    \begin{array}{ccccc}
      1 &  &  &  &  \\
       & 1 &  &  &  \\
       &  & \ddots &  &  \\
       &  &  & 1 &  \\
    b \tilde{f_{0}}  & b\tilde{f_{1}} &\ldots  & b\tilde{f_{m}} & 1 \\
    \end{array}
  \right) \] and $\nu:=(ba_{0}H_{n-1}^{(-1)}(t-\theta)^{n},ba_{1}H_{s_{12}-1}^{(-1)}(t-\theta)^{s_{12}},\ldots,ba_{m}H_{s_{m2}-1}^{(-1)}(t-\theta)^{s_{m2}})\in \Mat_{(m+1)\times 1}(\ok[t])$, then we have

  \begin{equation}\label{E:delta nu}
   \delta^{(-1)}\left(
                         \begin{array}{cc}
                           \Phi' &  \\
                           \nu & 1 \\
                         \end{array}
                       \right) =\left(
                         \begin{array}{cc}
                           \Phi' &  \\
                            & 1 \\
                         \end{array}
                       \right) \delta.
       \end{equation}

       Let $M'$ (resp. $M$) be the Frobenius module defined by $\Phi'$ (resp. $\Phi$) and note that $[M]\in \Ext_{\cF}^{1}\left({\bf{1}},M' \right)$. One checks directly that $M'$ is an Anderson $t$-motive (cf. the proofs of \cite[Prop.~6.1.3]{P08} and \cite[Lem.~A.1]{CY07}). So the action of $b$ on $M$ denoted by $b*M\in \cF$, is  defined by the matrix
       \[ b*\Phi:=\left(
                                                                                                    \begin{array}{cc}
                                                                                                      \Phi' &  \\
                                                                                                      \nu & 1 \\
                                                                                                    \end{array}
                                                                                                  \right)
        \] (see~\cite[\S\S~2.4]{CPY14}). So the system of difference equations (\ref{E:delta nu}) implies that $b*M$ represents the trivial class in $\Ext_{\cF}^{1}\left({\bf{1}},M' \right)$. Furthermore, if we let $\left\{x_{0},x_{1},\ldots,x_{m}\right\}$ be a $\ok[t]$-basis of $M'$ on which the action of $\sigma$ is represented by $\Phi'$, then we have the isomorphism of $\FF_{q}[t]$-modules (see~\cite[Thm.~5.2.1]{CPY14}):
       \[
            \begin{array}{rcl}
             \Ext_{\cF}^{1}\left({\bf{1}},M' \right)  & \cong & M'/(\sigma-1)M' \\
           \left[ b*M \right]  & \mapsto & \widetilde{b*M}:= ba_{0}H_{n-1}^{(-1)}(t-\theta)^{n}x_{0}+ \sum_{i=1}^{m}b a_{i}H_{s_{i2}-1}^{(-1)}(t-\theta)^{s_{i2}}x_{i}  +(\sigma-1)M' .\\
            \end{array}
         \]

         Note that $M'$ fits into the short exact sequence of Frobenius modules
         \[ 0\rightarrow C^{\otimes n}\rightarrow M' \twoheadrightarrow \oplus_{i=1}^{m} C^{\otimes s_{i2}}\rightarrow 0  ,\]
         where the projection map $\pi: M' \twoheadrightarrow \oplus_{i=1}^{m} C^{\otimes s_{i2}}$ is given by $\pi:=\left( \sum_{i=0}^{m}g_{i}x_{i}\mapsto (g_{1},\ldots,g_{m}) \right)$. However, since the $\FF_{q}[t]$-linear map $(\sigma-1):\oplus_{i=1}^{m} C^{\otimes s_{i2}}\rightarrow \oplus_{i=1}^{m} C^{\otimes s_{i2}} $ is injective, the snake lemma shows that we have the following short exact sequence of $\FF_{q}[t]$-modules:
         \[
              \begin{array}{ccccccccc}
                0 &\rightarrow  & C^{\otimes n}/(\sigma-1)C^{\otimes n} & \rightarrow & M'/(\sigma-1)M' & \twoheadrightarrow & \oplus_{i=1}^{m}\left( C^{\otimes s_{i2}}/(\sigma-1)C^{\otimes s_{i2}}\right) & \rightarrow & 0 .\\
                 &  &  &  & \widetilde{b* M} &\mapsto  & \left( ba_{i}H_{s_{i2}-1}^{(-1)}(t-\theta)^{s_{i2}} \right)_{i} &  &  \\
              \end{array}
           \]

           For any $s\in \NN$ we  recall the identification $C^{\otimes s}/(\sigma-1)C^{\otimes s}\cong \bC^{\otimes s}(\ok)$, which is an $\FF_{q}[t]$-module isomorphism and under which $H_{s-1}^{(-1)}(t-\theta)^{s}$ is mapped to the special point $\bv_{s}\in \bC^{\otimes s}(A)$ (by Remark~\ref{Rem:special point}), itself associated to $\zeta_{A}(s)$. It follows that $(ba_{i}H_{s_{i2}-1}^{(-1)}(t-\theta)^{s_{i2}} )_{i}$ is mapped to $([ba_{i}]_{s_{i2}} \bv_{s_{i2}}  )_{i}\in \oplus_{i=1}^{m}\bC^{\otimes s_{i2}}(A)$. Since $b*M$ represents the trivial class in $\Ext_{\cF}^{1}\left({\bf{1}},M' \right)$,  $\widetilde{b*M}$ is also trivial in $M'/(\sigma-1)M'$ and hence $[ba_{i}]\bv_{s_{i2}}={\bf{0}}\in \bC^{\otimes s_{i2}}(A)$ for each $1\leq i\leq m$. That is, each $\bv_{s_{i2}}$ is $ba_{i}$-torsion as $ba_{i}$ is nonzero. It follows by Remark~\ref{Rem:TorsionEven} that $s_{i2}$ must be divisible by $q-1$ for each $1\leq i \leq m$.

To finish the proof, it suffices to prove the claim above. Since $s_{i1}+s_{i2}=n$ for each $1\leq i\leq m$, without loss of generality we may assume that $s_{12}>s_{22}>\cdots>s_{m2}$. From the equation $(\tilde{\bf{f}}-\tilde{\bf{f}}^{(-1)}\Phi)\psi=0$ we have that
\begin{equation}\label{E:RL}
R_{0}\Omega^{n}+R_{1}\Omega^{s_{12}}\cL_{21}^{[1]}+\cdots+R_{m}\Omega^{s_{m2}}\cL_{21}^{[m]}=0.
\end{equation}
Dividing the equation above by $\Omega^{s_{m2}}$ one has
\begin{equation}\label{E:RL2}
R_{0}\Omega^{s_{m1}}+R_{1}\Omega^{s_{12}-s_{m2}}\cL_{21}^{[1]}+\cdots+R_{m}\cL_{21}^{[m]}=0.
\end{equation}
Note that $\Omega$ has simple zero at each $t=\theta^{q^{N}}$ for each $N\in \NN$. Since each $R_{i}$ is a rational function having only finitely many poles, we can pick a sufficiently large integer $N$ so that  $R_{i}$ is regular at $t=\theta^{q^{N}}$ for $i=1,\ldots,m$. Specializing both sides of the equation (\ref{E:RL2}) at $t=\theta^{q^{N}}$ and using the formula (\ref{E:formulaL21L31}) show that
\[ R_{m}(\theta^{q^{N}})\left(\zeta_{A}(s_{m2})/\tilde{\pi}^{s_{m2}} \right)^{q^N}=0 .\]
Since each MZV is non-vanishing by \cite{T09a} and the equality above is valid for $N\gg 0$, $R_m$ has to be zero as it has only finitely many zeros.

We turn back to the equation (\ref{E:RL}) and repeat the arguments above. We then eventually have $R_{m}=\cdots=R_{2}=0$ and so obtain
\[ R_{0}\Omega^{n}+R_{1}\Omega^{s_{12}}\cL_{21}^{[1]}=0   .\]
Again, by dividing $\Omega^{s_{12}}$, the arguments above show that $R_{1}=0$ and hence $R_{0}=0$.

\end{proof}

\begin{corollary}\label{C:NecCondDepth2}
Let $n\geq 2$ be an integer and  for $i=1,\ldots,m$, let $\fs_{i}=(s_{i1},s_{i2})\in \NN^{2}$ be chosen with $s_{i1}+s_{i2}=n$. Suppose that we have
\[
c_{0} \tilde{\pi}^{n}+ \sum_{i=1}^{m} c_{i}\zeta_{A}(\fs_{i})=0\hbox{ for some }c_{0},c_{1},\ldots,c_{m}\in k.
\]
If the coefficient $c_{j}$ is nonzero for some $1\leq j\leq m$, then we have that $(q-1)|s_{j2}$. In other words, all $k$-linear relations among  $\left\{\tilde{\pi}^{n}, \zeta_{A}(\fs_{1}),\ldots,\zeta_{A}(\fs_{m})\right\}$ are those coming from the $k$-linear relations among $\left\{\tilde{\pi}^{n}\right\} \cup \left\{ \zeta_{A}(\fs_{j}); (q-1)|s_{j2} \right\}$.
\end{corollary}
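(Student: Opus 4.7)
The plan is to adapt the proof of Theorem~\ref{T:NecCond} with one substitution: replace the contribution of $c_0\zeta_A(n)$ by that of $c_0\tilde{\pi}^n$. Since $\tilde{\pi}^n = \Omega(\theta)^{-n}$ and the difference equation $(\Omega^n)^{(-1)} = (t-\theta)^n\Omega^n$ is already encoded in the first diagonal entry of the matrix $\Phi$ used in the original argument, this substitution requires no extra row: I would simply absorb $c_0$ as the Frobenius-invariant constant $\tilde{c}_0 := c_0|_{\theta=t}\in\FF_q[t]$ in the last entry of $\psi$, and set the $(m+2,1)$ entry of $\Phi$ to $0$ (in place of the previous $a_0 H_{n-1}^{(-1)}(t-\theta)^n$).

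More precisely, after clearing denominators I may assume $c_0\in A$, each $c_i\neq 0$, and $a_i := c_i/(\Gamma_{s_{i1}}\Gamma_{s_{i2}})|_{\theta=t}\in\FF_q[t]$ for $i=1,\ldots,m$. I would take $\psi\in\TT^{m+2}$ with last entry $\tilde{c}_0 + \sum_{i=1}^m a_i\cL_{31}^{[i]}$ and other entries as in the proof of Theorem~\ref{T:NecCond}. A direct check using $\Omega^{(-1)}=(t-\theta)\Omega$, the equations (\ref{E:DiffPhi_i}), and $\tilde{c}_0^{(-1)}=\tilde{c}_0$ confirms $\psi^{(-1)}=\Phi\psi$, and the hypothesis becomes precisely the vanishing of the last entry of $\psi$ at $t=\theta$. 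Invoking the ABP-criterion and then mimicking the proof of Theorem~\ref{T:NecCond} verbatim---normalizing the last entry of $\tilde{\mathbf{f}}$ to $1$, and showing that the residues $R_0,\ldots,R_m$ defined by $\tilde{\mathbf{f}}-\tilde{\mathbf{f}}^{(-1)}\Phi=(R_0,\ldots,R_m,0)$ all vanish by successive division by $\Omega^{s_{j2}}$ and specialization at $\theta^{q^N}$, using the non-vanishing of MZV's~\cite{T09a}---would give that the Frobenius module $b*M$ represents the trivial class in $\Ext^1_{\cF}(\mathbf{1},M')$.

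The snake lemma projection $M'/(\sigma-1)M' \twoheadrightarrow \oplus_{i=1}^m C^{\otimes s_{i2}}/(\sigma-1)C^{\otimes s_{i2}}\cong\oplus_{i=1}^m\bC^{\otimes s_{i2}}(A)$ then sends the image of $b*M$ to $([ba_i]\bv_{s_{i2}})_i$; the fact that the first coordinate of the vector $\nu$ is now $0$ rather than $ba_0 H_{n-1}^{(-1)}(t-\theta)^n$ is invisible under this projection, which drops exactly that coordinate. Triviality of the extension class therefore forces $[ba_i]\bv_{s_{i2}}=0$ in $\bC^{\otimes s_{i2}}(A)$ for each $i$ with $c_i\neq 0$, and since $ba_i\neq 0$, Remark~\ref{Rem:TorsionEven} yields $(q-1)\mid s_{i2}$. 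The main (minor) obstacle is the bookkeeping verification that swapping $a_0\cL_n$ for the constant $\tilde{c}_0$ does not disturb any step in the proof of Theorem~\ref{T:NecCond}; the key point is that $\tilde{c}_0$ is Frobenius-invariant and $\Omega^n$ already plays the role of the ``$\tilde{\pi}^n$ column'' in the system, so no new conceptual input is required.
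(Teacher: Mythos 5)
Your proposal is correct, but it takes a genuinely different route from the paper's. The paper proves Corollary~\ref{C:NecCondDepth2} by a short reduction to Theorem~\ref{T:NecCond}, splitting into two cases: if $n$ is $A$-even, then $\zeta_{A}(n)/\tilde{\pi}^{n}\in k$ by Carlitz, so $c_{0}\tilde{\pi}^{n}$ can be rewritten as a $k$-multiple of $\zeta_{A}(n)$ and Theorem~\ref{T:NecCond} applies directly; if $n$ is $A$-odd, then $\tilde{\pi}^{n}\notin k_{\infty}$, while all MZV's lie in $k_{\infty}$, forcing $c_{0}=0$ and again reducing to Theorem~\ref{T:NecCond}. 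You instead re-run the whole proof of Theorem~\ref{T:NecCond} with the term $a_{0}\cL_{n}$ replaced by the Frobenius-invariant constant $\tilde{c}_{0}\in\FF_{q}[t]$ and the $(m+2,1)$ entry of $\Phi$ set to $0$; as you observe, $\tilde{c}_{0}^{(-1)}=\tilde{c}_{0}$ is exactly the difference equation needed, the ABP criterion still applies since the entries of $\psi$ remain in $\TT$ and $\det\Phi$ is still a power of $(t-\theta)$, the vanishing argument for $R_{0},\ldots,R_{m}$ uses only the first $m+1$ entries of $\psi$ (unchanged), and the snake-lemma projection drops precisely the first coordinate of $\nu$, so setting it to $0$ rather than $ba_{0}H_{n-1}^{(-1)}(t-\theta)^{n}$ changes nothing downstream. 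Your route is longer but more uniform: it dispenses with the case split and with the auxiliary inputs (Carlitz's theorem on $\zeta_{A}(n)/\tilde{\pi}^{n}$ for $A$-even $n$, and the non-real-ness of $\tilde{\pi}^{n}$ for $A$-odd $n$), treating $\tilde{\pi}^{n}$ on an equal footing with the double zeta values inside the Frobenius-difference machinery. The paper's route is shorter and cleanly modular, but relies on these two extra facts.
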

\begin{proof}
If $n$ is $A$-even, then $\zeta_{A}(n)/\tilde{\pi}^{n}\in k$ by Carlitz~\cite{Ca35}. So the result follows from Theorem~\ref{T:NecCond}. If $n$ is {\it{$A$-odd}} (ie., $n$ is not divisible by $q-1$), then $\tilde{\pi}^{n}\notin k_{\infty}$ and hence $\tilde{\pi}^{n}$ is $k$-linearly independent from all MZV's as each MZV is in $k_{\infty}$. Therefore, we can put $c_{0}=0$ in Theorem~\ref{T:NecCond} and the desired result follows.
\end{proof}

\begin{remark}
The result above shows that the set $\left\{\tilde{\pi}^{n}\right\} \cup \left\{ \zeta_{A}(\fs_{j}); (q-1)\nmid s_{j2} \right\}$ is linearly independent over $k$. It verifies the parity conjecture of Thakur~\cite{T09b} in this depth two setting.
\end{remark}

\begin{remark}\label{Rem:comparison}
For an even integer  $n>2$,  Gangl, Kaneko and Zagier~\cite[Thms.~1 and 2]{GKZ06} showed that
\[ \mathcal{O}:=(2\pi \sqrt{-1})^{n}\cup \left\{\zeta(3,n-3),\zeta(5,n-5),\ldots,\zeta(n-3,3)   \right\}  \]
is a set of generators for the vector space $\mathrm{DZ}_{n}$. Our point of view is that the set $\mathcal{O}$ is generated by $(2\pi\sqrt{-1})^{n}$ and weight $n$ double zeta values $\zeta(odd,odd)$, but excluding $\zeta(n-1,1)$ ($\zeta$ is not defined at $1$ and so the odd $1$ is special). Note that $\frac{n}{2}-1$ is the cardinality of $\mathcal{O}$ and hence Conjecture~\ref{Conj1} in the case of even $n> 2$ is equivalent to that there are $\dim_{\CC}S_{n}(SL_{2}(\ZZ))$ independent $\QQ$-linear relations among the set $\mathcal{O}$.

When the given weight $n\geq 2$ is $A$-even,  we consider the set
\[ \mathcal{O}_{A}:=\left\{ \tilde{\pi}^{n}\right\}\cup\left\{\zeta_{A}(s_{1},s_{2})|s_{1}+s_{2}=n\hbox{ and }(q-1)\nmid s_{2}  \right\},   \]
which is the analogue of the set $\mathcal{O}$ since each $\zeta_{A}(s_{1},s_{2})\in \mathcal{O}_{A}$ has the property that $s_{1}$ and $s_{2}$ are both $A$-odd. So it is not analogous to Conjecture~\ref{Conj1} as $\mathcal{O}_{A}$ is $k$-linearly independent by Corollary~\ref{C:NecCondDepth2}.
\end{remark}

\section{Step~II: Logarithmic Interpretation}
\subsection{The formulation}
In this section, our goal is to establish the following result.
\begin{theorem}\label{T1:LogMZV}
Let $r\geq 2$ be an integer and let $\fs:=(s_{1},\ldots,s_{r})\in \NN^{r}$ with $n:=\sum_{i=1}^{r}s_{i}$. Put $\fs':=(s_{2},\ldots,s_{r})$ and suppose that $\zeta_{A}(\fs')$ is Eulerian, ie., $\zeta_{A}(s_{2},\ldots,s_{r})/\tilde{\pi}^{s_{2}+\cdots+s_{r}}\in k$. Let $\alpha_{\fs}:=a_{\fs'}\in \FF_{q}[t]$ be given in Theorem~\ref{T:CPY-criterion}.
Then we explicitly construct an integral point $\Xi_{\fs}\in \bC^{\otimes n}(A)$
so that there exists a vector $Y_{\fs}\in \CC_{\infty}^{ n}$ of the form
\[Y_{\fs}=\left(
            \begin{array}{c}
              * \\
              \vdots \\
             \alpha_{\fs}(\theta)\Gamma_{s_{1}}\cdots\Gamma_{s_{r}}\zeta_{A}(s_{1},\ldots,s_{r})  \\
            \end{array}
          \right)
   \]
satisfying  \[\exp_{ n}(Y_{\fs})=\Xi_{\fs}.\]
\end{theorem}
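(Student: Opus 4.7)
My strategy combines the Frobenius-module picture of special points from \cite{CPY14} with Papanikolas' explicit formula for the bottom row of the coefficient matrices of $\log_{n}$; the Eulerian hypothesis enters through Theorem~\ref{T:CPY-criterion} by annihilating the ``non-$\bC^{\otimes n}$'' part of $\bv_{\fs}$.

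First, to construct $\Xi_{\fs}$, observe that the block-triangular form of $\Phi'_{\fs}$ in \eqref{E:Phi s'} exhibits $M'_{\fs}$ as an extension of Frobenius modules
\begin{equation*}
0\to C^{\otimes n}\to M'_{\fs}\to M'_{\fs'}\to 0
\end{equation*}
(coming from its first row and column). Applying the Anderson correspondence recalled in \eqref{E:E',rho}--\eqref{E:Isom C otimes n} yields a short exact sequence of $t$-modules over $A$,
\begin{equation*}
0\to \bC^{\otimes n}\hookrightarrow E'_{\fs}\xrightarrow{\pi} E'_{\fs'}\to 0,
\end{equation*}
under which $\pi(\bv_{\fs})=\bv_{\fs'}$. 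By Theorem~\ref{T:CPY-criterion} and the Eulerian hypothesis on $\zeta_{A}(\fs')$, one has $[\alpha_{\fs}]\bv_{\fs'}=0$, so $[\alpha_{\fs}]\bv_{\fs}$ lies in $\ker\pi = \bC^{\otimes n}(A)$; I set $\Xi_{\fs}:=[\alpha_{\fs}]\bv_{\fs}$.

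Next I construct $Y_{\fs}$ and identify its last coordinate. The difference equation $\Psi'_{\fs}{}^{(-1)}=\Phi'_{\fs}\Psi'_{\fs}$ encodes the exponential/logarithm theory of $E'_{\fs}$: the bottom block column of $\Psi'_{\fs}$ specialized at $t=\theta$ produces a vector $\widetilde{Y}_{\fs}\in\Lie E'_{\fs}$ satisfying $\exp_{E'_{\fs}}(\widetilde{Y}_{\fs})=\bv_{\fs}$, whose bottom coordinate is $\tilde{\pi}^{n}\cL_{\fs}(\theta)=\Gamma_{s_{1}}\cdots\Gamma_{s_{r}}\zeta_{A}(\fs)$ by Lemma~\ref{L:L theta qN}. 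Applying $\partial[\alpha_{\fs}]$ and using $\exp_{E'_{\fs}}\circ\partial[\alpha_{\fs}] = [\alpha_{\fs}]\circ\exp_{E'_{\fs}}$ gives $\exp_{E'_{\fs}}(\partial[\alpha_{\fs}]\widetilde{Y}_{\fs})=\Xi_{\fs}\in\bC^{\otimes n}(A)\subset E'_{\fs}(A)$. Because $\bC^{\otimes n}$ is a sub-$t$-module of $E'_{\fs}$, after subtracting a suitable period of $E'_{\fs}$ the vector $\partial[\alpha_{\fs}]\widetilde{Y}_{\fs}$ descends to $Y_{\fs}\in\Lie\bC^{\otimes n}=\CC_{\infty}^{n}$ with $\exp_{n}(Y_{\fs})=\Xi_{\fs}$; since $\partial[\alpha_{\fs}]$ multiplies the bottom coordinate by $\alpha_{\fs}(\theta)$, this yields the prescribed last coordinate $\alpha_{\fs}(\theta)\Gamma_{s_{1}}\cdots\Gamma_{s_{r}}\zeta_{A}(\fs)$.

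The main obstacle is verifying that the descent of $\partial[\alpha_{\fs}]\widetilde{Y}_{\fs}$ into $\Lie\bC^{\otimes n}$ does not disturb the prescribed bottom coordinate; equivalently, one must show that any period correction needed to land in $\Lie\bC^{\otimes n}$ lives entirely in the upper coordinates. This is precisely what Papanikolas' explicit formula for the bottom row of each coefficient matrix of $\log_{n}$ is designed to deliver: applied at the point $\Xi_{\fs}$, itself described via \eqref{E:Isom C otimes n}--\eqref{E:Phi f} in terms of the polynomials $H_{s_{i}-1}^{(-1)}(t-\theta)^{\bullet}$ (cf.\ Remark~\ref{Rem:special point}) and the $[\alpha_{\fs}]$-action, this formula directly computes the bottom coordinate of $\log_{n}(\Xi_{\fs})$ and, by another invocation of Lemma~\ref{L:L theta qN}, matches it to $\alpha_{\fs}(\theta)\Gamma_{s_{1}}\cdots\Gamma_{s_{r}}\zeta_{A}(\fs)$. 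This explicit computation is the technical heart of the argument.
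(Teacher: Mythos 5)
Your construction of $\Xi_{\fs}=\rho_{\alpha_{\fs}}(\bv_{\fs})\in\Ker\pi=\bC^{\otimes n}(A)$ is exactly the paper's, and the philosophy—route the computation through the Frobenius-module description and Papanikolas' bottom-row formula—is also the right one. The difficulty is that the two steps you outline after that are not actually carried out and, as stated, would not close.

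First, you never address convergence. The paper does not compute $\log_{n}(\Xi_{\fs})$, because there is no reason for $\Xi_{\fs}$ to lie in the convergence domain $\mathbb{D}_{n}$ of $\log_{n}$; instead it invokes Lemma~\ref{L:DivisionPoint} to pass to a $t^{m}$-division point $\bu_{\fs}$ with $[t^{m}]_{n}(\bu_{\fs})=\Xi_{\fs}$ and $\log_{n}(\bu_{\fs})$ convergent, applies Lemma~\ref{L:LogPeriod} there, and only then recovers $Y_{\fs}$ via $\partial[t^{m}]_{n}$. Your final paragraph asserts that Papanikolas' formula ``directly computes the bottom coordinate of $\log_{n}(\Xi_{\fs})$,'' but that series need not converge, so this step is not available as written.

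Second, your intermediate object $\widetilde{Y}_{\fs}$ is asserted but not constructed. You claim the bottom block column of $\Psi'_{\fs}$ (an $r\times r$ matrix) ``produces a vector $\widetilde{Y}_{\fs}\in\Lie E'_{\fs}$''—but $\Lie E'_{\fs}\cong\CC_{\infty}^{d}$ with $d=\sum_{j=1}^{r}(s_{j}+\cdots+s_{r})\gg r$, so the dimensions do not match and the extraction of such a $\widetilde{Y}_{\fs}$, let alone the equality $\exp_{E'_{\fs}}(\widetilde{Y}_{\fs})=\bv_{\fs}$ with prescribed bottom entry, requires substantial work that the paper avoids by never working in $\Lie E'_{\fs}$ at all. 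Third, your stated ``main obstacle''—that the period correction needed to descend into $\Lie\bC^{\otimes n}$ ``lives entirely in the upper coordinates''—is not what one should expect, and is not what the paper proves. In the paper's identity \eqref{E:Key Identity} the bottom coordinate picks up the extra term $\gamma_{1}(\theta)\tilde{\pi}^{n}$ with $\gamma_{1}\in\FF_{q}[t]$; that term is nonzero in general, and the proof absorbs it by adding the period $\partial[\gamma_{1}]_{n}\lambda_{n}\in\Ker\exp_{n}$ to $Y_{\fs}$. Detecting that this correction is an $\FF_{q}[\theta]$-multiple of $\tilde{\pi}^{n}$ (rather than something uncontrollable) is precisely the content of the gauge-transformation ``Claim'' in the proof, which produces the matrix $\nu$ conjugating $\alpha*\Phi_{\fs}$ to $\widetilde{\Phi}_{t^{m}f}$ and then compares fundamental matrices; none of that appears in your sketch. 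So while your target and your construction of $\Xi_{\fs}$ match the paper, the existence and exact bottom coordinate of $Y_{\fs}$ are not established by your argument.
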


 We divide the proof into the following steps.

\subsection{Difference equations arising from algebraic points of $\bC^{\otimes n}$}
For each positive integer $n$, we denote by $\log_{n}$ the logarithm of $\bC^{\otimes n}$. We first recall the convergence domain $\mathbb{D}_{n}$ of $\log_{n}$ (see \cite[Prop.~2.4.3]{AT90}):

\[ \mathbb{D}_{n}:= \left\{ \bz:=\left(z_1,\ldots,z_n \right)^{\tr}\in \bC^{\otimes n}(\CC_{\infty});\hbox{ } |z_{i}|_{\infty}< |\theta|_{\infty}^{i-n+\frac{nq}{q-1}}\hbox{ for }i=1,\ldots,n.  \right\} \]
For a fixed nonzero point $\bu=(u_{1},\ldots,u_{n})^{\tr}\in \bC^{\otimes n}(\ok)\cap \mathbb{D}_{n}$, we define the following polynomial associated to $\bu$:
\begin{equation}\label{E:fu}
 f:=f_{\bu}:=u_{1}(t-\theta)^{n-1}+\cdots+u_{n-1}(t-\theta)+u_{n} \in \ok[t] .
\end{equation}
 We let $M_{f}$ be the Frobenius module defined by the matrix $\Phi_{f}$ in \eqref{E:Phi f} and note that $[M_{f}]\in \Ext_{\cF}^{1}\left( {\bf{1}},C^{\otimes n} \right)$.

Now we define the series
\begin{equation}\label{E:Lf}
 \cL_{f}:=f+\sum_{i=1}^{\infty}\frac{f^{(i)} }{ (t-\theta^{q})^{n}\ldots(t-\theta^{q^{i}})^{n} } \in \power{\ok}{t} ,
 \end{equation}
and note that this kind of series was introduced by Papanikolas~\cite{P08} and later on studied in \cite{CY07, C14, M14, CPY14}.

\begin{proposition}
Let notation and assumptions be as above. Then there exists $\Psi_{f}\in \GL_{2}(\TT)$ so that $\Psi_{f}^{(-1)}=\Phi_{f} \Psi_{f}$. So $M_{f}$ is a $t$-motive in the sense of \cite{P08}.
\end{proposition}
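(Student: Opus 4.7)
Because $\Phi_f$ is lower triangular with diagonal $((t-\theta)^n, 1)$, the natural ansatz is a lower triangular rigid analytic trivialization
\[
\Psi_f \;=\; \begin{pmatrix} \Omega^n & 0 \\ h & 1 \end{pmatrix}
\]
for some $h \in \TT$ to be determined. Using $\Omega^{(-1)} = (t-\theta)\Omega$, the $(1,1)$-entry of the equation $\Psi_f^{(-1)} = \Phi_f \Psi_f$ is automatic, the $(2,2)$ and $(1,2)$ entries are trivial, and the only nontrivial content is the $(2,1)$-entry, which boils down to the single scalar functional equation
\[
h^{(-1)} \;=\; h \;+\; f^{(-1)}(t-\theta)^n\Omega^n.
\]
So the entire proposition reduces to producing $h\in\TT$ satisfying this identity (together with checking $\Psi_f\in\GL_2(\TT)$).

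\textbf{The candidate.} I will take $h := \Omega^n \cL_f$, where $\cL_f$ is the series defined in \eqref{E:Lf}. The verification is a direct telescoping computation: splitting off the $i=0$ term in $\cL_f^{(-1)}$ and re-indexing shows
\[
\cL_f^{(-1)} \;=\; f^{(-1)} \;+\; \frac{\cL_f}{(t-\theta)^n},
\]
and hence $h^{(-1)} = (t-\theta)^n\Omega^n \cL_f^{(-1)} = f^{(-1)}(t-\theta)^n\Omega^n + \Omega^n \cL_f$, which is the required identity. (Equivalently, $\cL_f$ is designed to be the unique solution in $\TT$ to this recursion that vanishes at infinity after twisting.)

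\textbf{Convergence in $\TT$.} The main technical step is to show $\cL_f \in \TT$, which is where the hypothesis $\bu \in \mathbb{D}_n$ is used. Writing $|u_j|_\infty = |\theta|_\infty^{\alpha_j}$ with $\alpha_j < j - n + nq/(q-1)$ and setting $\beta := \max_j (\alpha_j + n - j) < nq/(q-1)$, one estimates on the closed unit disc
\[
|f^{(i)}(t)|_\infty \;\le\; |\theta|_\infty^{\beta q^i}, \qquad \Bigl|\prod_{j=1}^i (t-\theta^{q^j})^n\Bigr|_\infty \;=\; |\theta|_\infty^{nq(q^i-1)/(q-1)},
\]
so the $i$th summand of $\cL_f$ has Gauss norm at most $|\theta|_\infty^{nq/(q-1)}\cdot |\theta|_\infty^{q^i(\beta - nq/(q-1))}$, which decays to $0$ super-exponentially. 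Thus $\cL_f\in\TT$, and since $\Omega\in\TT$ is a unit of $\TT$ (its infinite product has no zero on $|t|\le 1$ because all its zeros lie at $t=\theta^{q^i}$ for $i\ge 1$), we conclude $\Psi_f\in\GL_2(\TT)$ with $\det\Psi_f=\Omega^n$.

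\textbf{Conclusion and expected difficulty.} Once $\Psi_f$ is constructed, rigid analytic triviality of $M_f$ in the sense of \cite{P08} holds by definition. That $M_f$ is an Anderson $t$-motive follows from the short exact sequence $0\to C^{\otimes n}\to M_f\to \mathbf{1}\to 0$: both $C^{\otimes n}$ and $\mathbf{1}$ are Anderson $t$-motives, the matrix $\Phi_f$ shows $\sigma M_f\subseteq (t-\theta)^n M_f$, and freeness over $\ok[\sigma]$ is inherited from the two outer terms (standard argument, cf.\ \cite[Prop.~6.1.3]{P08}). Combining these gives the $t$-motive assertion. The only genuinely nontrivial obstacle is the $\TT$-convergence of $\cL_f$, which is exactly the reason one must assume $\bu$ lies in the convergence domain $\mathbb{D}_n$ of $\log_n$; the algebraic manipulations producing $\Psi_f$ and the consequence for $M_f$ are then formal.
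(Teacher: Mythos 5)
Your construction of $\Psi_f$, the verification of the difference equation via the telescoping identity $\cL_f^{(-1)} = f^{(-1)} + \cL_f/(t-\theta)^n$, and the Gauss-norm estimate showing $\cL_f\in\TT$ are all correct; they match the paper's argument, and your explicit estimate is a clean unpacking of what the paper delegates to \cite[Lem.~5.3.1]{C14}. (In fact the paper proves the slightly stronger fact that $\Omega^n\cL_f$ is entire, but for $\Psi_f\in\GL_2(\TT)$ your bound suffices.)

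The final paragraph, however, contains a genuine error. You assert that $\mathbf{1}$ is an Anderson $t$-motive and that $\sigma M_f\subseteq(t-\theta)^n M_f$; neither holds. For $\mathbf{1}=\ok[t]$ with $\sigma f = f^{(-1)}$, the $\ok[\sigma]$-submodule generated by $1$ is the torsion module $\ok$ (killed on the left by $\sigma-1$), and $\mathbf{1}=\bigoplus_{j\ge 0}\ok t^j$ is an infinite sum of such torsion modules; it is neither free nor finitely generated over $\ok[\sigma]$, so it is not an Anderson $t$-motive (it is only the unit object of the Tannakian category). Consequently $M_f$ is not an Anderson $t$-motive either: it has $\mathbf{1}$ as a $\ok[t,\sigma]$-quotient, so it cannot be finitely generated over $\ok[\sigma]$. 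And concretely, with $\sigma m_2 = f^{(-1)}(t-\theta)^n m_1 + m_2$, the unit coefficient on $m_2$ shows $\sigma m_2\notin(t-\theta)M_f$. What the proposition actually asserts is the weaker claim that $M_f$ is a $t$-motive in the Tannakian sense of \cite{P08}, and that requires a different (standard) argument, the one implicit in \cite[Prop.~6.1.3]{P08} and \cite[\S~3.2]{CY07}: tensoring $M_f$ with the Carlitz motive $C$ replaces the quotient $\mathbf{1}$ by $C$ and produces a genuine rigid analytically trivial Anderson $t$-motive $M_f\otimes C$, and then $M_f\cong(M_f\otimes C)\otimes C^\vee$ lies in the category of $t$-motives because that category is closed under tensor products and duals. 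Your short-exact-sequence argument, as written, would not go through.
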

\begin{proof}
Note that by the definition of $\cL_{f}$ we have
\[\cL_{f}^{(-1)}=f^{(-1)}+\sum_{i=0}^{\infty}\frac{f^{(i)}}{ (t-\theta)^{n}\cdots(t-\theta^{q^{i}})^{n}}=f^{(-1)}+\frac{\cL_{f}}{(t-\theta)^{n}},\] and hence
\begin{equation}\label{E:OmegaLf}
\left( \Omega^{n}\cL_{f} \right)^{(-1)}=f^{(-1)}(t-\theta)^{n} \Omega^{n} +\Omega^{n} \cL_{f}.
\end{equation}

It follows that if we put
\begin{equation}\label{Def:Psif}
 \Psi_{f}:=\left(
               \begin{array}{cc}
                 \Omega^{n} & 0 \\
                 \Omega^{n}\cL_{f} & 1 \\
               \end{array}
             \right),
 \end{equation}  then we have
 \[ \Psi_{f}^{(-1)}=\Phi_{f}\Psi_{f}.  \]

The hypotheses of $\bu$ imply that $\Omega^{n}\cL_{f}$ satisfies the condition of \cite[Lemma~5.3.1]{C14}, whence $\Omega^{n}\cL_{f}$ is an entire function and therefore $M_{f}$ is a $t$-motive ( cf.~\cite[Prop.~6.1.3]{P08} and \cite[\S\S~3.2]{CY07}).
\end{proof}

\subsection{Some Lemmas}
\begin{lemma}\label{L:LogPeriod}
Let $\bu$ be a nonzero point in $\bC^{\otimes n}(\ok)\cap\mathbb{D}_{n}$, and let $y$ be the last coordinate of $\log_{n}(\bu)$. Let $f\in \ok[t]$ be the polynomial associated to $\bu$ given in (\ref{E:fu}) and $\Psi_{f}$ be defined in \eqref{Def:Psif}.  Then we have
\[\Psi_{f}(\theta)=\left(
                     \begin{array}{cc}
                       1/\tilde{\pi}^{n} & 0 \\
                       y/\tilde{\pi}^{n} & 1 \\
                     \end{array}
                   \right)
   .\]
\end{lemma}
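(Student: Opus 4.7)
The plan is to reduce the lemma to a single scalar identity, namely $\cL_f(\theta)=y$, and then to verify it by matching an explicit series expansion against Papanikolas's formula for the bottom row of the coefficients of $\log_n$.

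By the definition \eqref{Def:Psif} of $\Psi_f$, three of the four entries of $\Psi_f(\theta)$ are immediate: the top-right entry is $0$ and the bottom-right entry is $1$ by construction, while the top-left entry equals $\Omega^n(\theta)=1/\tilde\pi^n$ since $\Omega(\theta)=1/\tilde\pi$. Thus it only remains to establish that $\Omega^n(\theta)\cL_f(\theta)=y/\tilde\pi^n$, which is equivalent to $\cL_f(\theta)=y$.

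To identify both sides, I would expand them as series. Starting from $f=u_1(t-\theta)^{n-1}+\cdots+u_{n-1}(t-\theta)+u_n$, one has $f(\theta)=u_n$ and $f^{(i)}(\theta)=\sum_{j=1}^{n}u_j^{q^i}(\theta-\theta^{q^i})^{n-j}$, while the denominator $(t-\theta^q)^n\cdots(t-\theta^{q^i})^n$ in \eqref{E:Lf} specializes at $t=\theta$ to $L_i^n$. Hence the definition of $\cL_f$ yields
\[
\cL_f(\theta)\;=\;u_n\;+\;\sum_{i\geq 1}\frac{1}{L_i^n}\sum_{j=1}^{n}u_j^{q^i}(\theta-\theta^{q^i})^{n-j}.
\]
On the other hand, writing $\log_n=I_n+\sum_{i\geq 1}P_i\tau^i$ with $P_i\in\Mat_n(k)$, the last coordinate of $\log_n(\bu)$ is $y=u_n+\sum_{i\geq 1}(P_i)_n\,\bu^{(i)}$, where $(P_i)_n$ denotes the bottom row of $P_i$ and $\bu^{(i)}=(u_1^{q^i},\ldots,u_n^{q^i})^{\tr}$. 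Matching these two expressions reduces the lemma to the assertion
\[
(P_i)_n\;=\;\frac{1}{L_i^n}\Bigl((\theta-\theta^{q^i})^{n-1},\,(\theta-\theta^{q^i})^{n-2},\,\ldots,\,1\Bigr),
\]
which is exactly the content of the explicit formula of Papanikolas \cite{P14} announced in the introduction.

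The convergence hypothesis $\bu\in\mathbb{D}_n$ is used twice, and it is important to note but not hard to verify: it guarantees that $\log_n(\bu)$ is defined (so the right-hand side $y$ makes sense), and it underlies the estimate—already exploited when showing $\Omega^n\cL_f$ is entire—which ensures that the series for $\cL_f(\theta)$ converges. The genuine obstacle in the argument is purely bookkeeping: checking that Papanikolas's formula is normalized in the form stated above. Once that is confirmed, the term-by-term comparison closes the proof.
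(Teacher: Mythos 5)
Your proof is correct and follows essentially the same route as the paper: the top-left, top-right, and bottom-right entries of $\Psi_f(\theta)$ are immediate, the remaining entry reduces to $\cL_f(\theta)=y$, and one verifies this by specializing the series \eqref{E:Lf} at $t=\theta$ (so the denominators become $L_i^n$) and matching term by term against Papanikolas's formula for the bottom row of the coefficient matrices $P_i$ of $\log_n$. Your rewriting of the row vector as $\frac{1}{L_i^n}\bigl((\theta-\theta^{q^i})^{n-1},\ldots,1\bigr)$ agrees with Proposition~\ref{T:CoeffPi} after absorbing the signs $(-1)^{n-\ell}$ into $(\theta-\theta^{q^i})^{n-\ell}$, and your remarks on the role of the domain $\mathbb{D}_n$ match the paper's use of it.
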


To prove Lemma~\ref{L:LogPeriod}, we need the following formula due to Papanikolas.

\begin{proposition}{\rm{(Papanikolas~\cite{P14})}}\label{T:CoeffPi} We write $\log_{n}=\sum_{i=0}^{\infty}P_{i}\tau^{i}$, where $P_{0}=I_{n}$ and  $P_{i}\in \Mat_{n}(k)$.  For each positive integer $i$, the bottom row vector of $P_{i}$ is given by
\[ \left( \frac{(-1)^{n-1}(\theta^{q^{i}}-\theta)^{n-1} }{ L_{i}^{n}},\ldots,  \frac{(-1)^{n-\ell}(\theta^{q^{i-\ell}}-\theta)^{n-\ell} }{ L_{i}^{n}} ,\ldots,\frac{1}{L_{i}^{n}} \right) .\]
\end{proposition}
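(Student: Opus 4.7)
The plan is to derive the formula from the defining functional identity $\log_n \circ [t]_n = \partial[t]_n \circ \log_n$. Write $[t]_n = \theta I_n + N + E\tau$, where $N$ is the $n\times n$ matrix with $1$'s on the super-diagonal and $0$'s elsewhere, and $E$ is the matrix unit with a $1$ in position $(n,1)$ and $0$'s elsewhere; correspondingly $\partial[t]_n = \theta I_n + N$. Substituting the series $\log_n = \sum_{i\geq 0} P_i\tau^i$ (with $P_0 = I_n$) into the identity, and using $\tau^i\cdot c = c^{q^i}\tau^i$ together with the fact that $N$ and $E$ have $\FF_p$-entries (so are Frobenius-invariant), matching coefficients of $\tau^i$ for $i\geq 1$ yields the matrix recurrence
\[
(\theta^{q^i}-\theta)\,P_i \;=\; NP_i - P_iN - P_{i-1}E.
\]

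Next I would read off the bottom row of this identity. Since $N$ acting on the left annihilates the last row, the bottom row of $NP_i$ is zero; the bottom row of $P_iN$ is the shift $(0,p_{i,1},\ldots,p_{i,n-1})$; and the bottom row of $P_{i-1}E$ is $(p_{i-1,n},0,\ldots,0)$. This produces the scalar system
\[
(\theta^{q^i}-\theta)\,p_{i,1} = -p_{i-1,n}, \qquad (\theta^{q^i}-\theta)\,p_{i,j} = -p_{i,j-1}\ \text{for}\ j = 2,\ldots,n,
\]
and the second family telescopes at once to $p_{i,j} = (-1)^{j-1}p_{i,1}/(\theta^{q^i}-\theta)^{j-1}$, reducing the whole computation to that of $p_{i,1}$.

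Finally I would close the argument by induction on $i$, with base case $P_0 = I_n$ giving $p_{0,n}=1$. Assuming inductively that $p_{i-1,n}=1/L_{i-1}^n$, the first recurrence together with the factorization $L_i = L_{i-1}(\theta-\theta^{q^i})$ gives
\[
p_{i,1} \;=\; \frac{(\theta-\theta^{q^i})^{n-1}}{L_i^n} \;=\; \frac{(-1)^{n-1}(\theta^{q^i}-\theta)^{n-1}}{L_i^n}.
\]
Feeding this into the telescoped relation yields the general entry $p_{i,j} = (-1)^{n-j}(\theta^{q^i}-\theta)^{n-j}/L_i^n$, which is the claimed formula; the specialization $j=n$ recovers $p_{i,n}=1/L_i^n$ and thereby advances the induction. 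There is no serious obstacle here: the only points needing care are the sign bookkeeping (tracking the factors of $-1$ coming from $L_i$ versus $\theta^{q^i}-\theta$) and the correct use of $\tau^i\cdot\alpha = \alpha^{q^i}\cdot\tau^i$ when moving scalars past twists in $\Mat_n(\CC_\infty\{\tau\})$; the rest is forced by the triangular shape of the recursion.
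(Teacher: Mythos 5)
Your argument is correct and complete. Note first that the paper itself gives no proof of this proposition: it is quoted from Papanikolas's unpublished work \cite{P14}, so there is nothing internal to compare against, and your derivation supplies a genuinely self-contained verification. The computation checks out at every step: writing $[t]_n=\theta I_n+N+E\tau$ and using $\tau^i A=A^{(i)}\tau^i$ with $N,E$ Frobenius-fixed, the coefficient of $\tau^i$ ($i\geq 1$) in $\log_n\circ[t]_n=\partial[t]_n\circ\log_n$ does give $(\theta^{q^i}-\theta)P_i=NP_i-P_iN-P_{i-1}E$; the bottom row of this identity closes on itself (it involves only the bottom rows of $P_i$ and $P_{i-1}$, since the bottom row of $NP_i$ vanishes and $P_{i-1}E$ contributes only $p_{i-1,n}$ in the first slot), so your induction on $i$ starting from $P_0=I_n$ is legitimate and forces $p_{i,j}=(-1)^{n-j}(\theta^{q^i}-\theta)^{n-j}/L_i^n$, using $L_i=L_{i-1}(\theta-\theta^{q^i})$. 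One small point worth recording: the displayed general term in the proposition reads $\theta^{q^{i-\ell}}$, which is evidently a typo for $\theta^{q^{i}}$ — both the first and last entries of the displayed row and the way the formula is applied in the proof of Lemma~\ref{L:LogPeriod} (where the $j$-th coefficient multiplying $u_j^{q^i}$ is $(-1)^{n-j}(\theta^{q^i}-\theta)^{n-j}/L_i^n$) agree with what you derived, so your result is the intended statement.
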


{\it{Proof of Lemma~\ref{L:LogPeriod}}}. As we know that $\Omega^{n}(\theta)=1/\tilde{\pi}^{n}$, by (\ref{Def:Psif}) it suffices to show that
\[ \cL_{f}(\theta)=y.   \]
We interpret $\cL_{f}$ as
\[\cL_{f}= \left( u_{1}(t-\theta)^{n-1}+\cdots+u_{n-1}(t-\theta) +u_{n}\right)   +\sum_{i=1}^{\infty}\frac{(t-\theta^{q^{i}})^{n-1}u_{1}^{q^{i}}+\cdots+(t-\theta^{q^{i}})u_{n-1}^{q^{i}}+u_{n}^{q^{i}} }{ (t-\theta^{q})^{n}\ldots(t-\theta^{q^{i}})^{n} } . \]
By specializing at $t=\theta$ we have
\[ \cL_{f}(\theta)=u_{n}+\sum_{i=1}^{\infty}\frac{(-1)^{n-1} (\theta^{q^{i}}-\theta)^{n-1}u_{1}^{q^{i}}+\cdots+ (-1)(\theta^{q^{i}}-\theta)u_{n-1}^{q^{i}}+u_{n}^{q^{i}} }{ L_{i}^{n} }  . \]
Hence by Proposition~\ref{T:CoeffPi} we obtain that $y=\cL_{f}(\theta)$.

\begin{lemma}\label{L:DivisionPoint}
Let $n$ be a positive integer and  let $\Xi\in \bC^{\otimes n}(\ok)$ be an algebraic point. Then there exists a positive integer $m$ and an algebraic point $\bu\in \bC^{\otimes n}(\ok)$ (depending on $m$) satisfying
\begin{enumerate}
\item[$\bullet$] $[t^{m}]_{n}( \bu)=\Xi$ (ie., $\bu$ is a $t^{m}$-division point of $\Xi$),
\item[$\bullet$] $\log_{n}(\bu)$ converges.
\end{enumerate}
\end{lemma}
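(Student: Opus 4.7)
The plan is to combine the surjectivity of $\exp_n$ with the functional equation $\exp_n\circ\partial[t^m]_n = [t^m]_n\circ\exp_n$, and to exploit the fact that $(\partial[t^m]_n)^{-1}$ shrinks vectors by a factor of order $|\theta|_\infty^{-m}$, to produce $t^m$-division points of $\Xi$ that eventually land in the convergence domain $\mathbb{D}_n$.

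First I would use the surjectivity of $\exp_n:\CC_\infty^n\twoheadrightarrow\bC^{\otimes n}(\CC_\infty)$ (Anderson--Thakur) to pick some $Z\in\CC_\infty^n$ with $\exp_n(Z)=\Xi$. Writing $\partial[t]_n=\theta I_n+N$, where $N$ is the nilpotent superdiagonal Jordan block visible in the definition of $[t]_n$, one computes
\[
(\partial[t]_n)^{-1}=\sum_{i=0}^{n-1}(-1)^{i}\theta^{-i-1}N^{i},
\]
so the operator norm of $(\partial[t]_n)^{-1}$ with respect to the max-norm on $\CC_\infty^n$ is at most $|\theta|_\infty^{-1}$, and therefore $(\partial[t^m]_n)^{-1}=(\partial[t]_n)^{-m}$ has operator norm at most $|\theta|_\infty^{-m}$. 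Setting $W_m:=(\partial[t^m]_n)^{-1}Z$ and $\bu_m:=\exp_n(W_m)$, the functional equation gives
\[
[t^m]_n(\bu_m)=\exp_n\!\bigl(\partial[t^m]_n\cdot W_m\bigr)=\exp_n(Z)=\Xi,
\]
so $\bu_m$ is a $t^m$-division point of $\Xi$. Moreover $[t^m]_n:\bC^{\otimes n}\to\bC^{\otimes n}$ is a finite surjective morphism of group schemes defined over $A$, so each point in its fiber over the algebraic point $\Xi$ is algebraic, giving $\bu_m\in\bC^{\otimes n}(\ok)$.

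It then suffices to verify $\bu_m\in\mathbb{D}_n$ for $m$ large. Writing $\exp_n=I_n+\sum_{i\geq 1}\alpha_i\tau^i$, entireness of $\exp_n$ on $\CC_\infty^n$ forces $|\alpha_i|_\infty$ to decay fast enough that there exists a threshold $\varepsilon>0$, depending only on $n$, such that $\|\exp_n(W)\|_\infty\leq\|W\|_\infty$ for every $W$ with $\|W\|_\infty\leq\varepsilon$, since under the ultrametric inequality the linear term dominates once $|\alpha_i|_\infty\|W\|_\infty^{q^i}\leq\|W\|_\infty$ for all $i\geq 1$. Because $\|W_m\|_\infty\leq |\theta|_\infty^{-m}\|Z\|_\infty\to 0$, we get $\|\bu_m\|_\infty\to 0$, and hence for $m$ sufficiently large every coordinate $(\bu_m)_i$ has $|(\bu_m)_i|_\infty<|\theta|_\infty^{i-n+nq/(q-1)}$, placing $\bu_m$ in $\mathbb{D}_n$ so that $\log_n(\bu_m)$ converges. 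The main delicate point is the contractivity estimate $\|\exp_n(W)\|_\infty\leq\|W\|_\infty$ for small $W$; it is a standard consequence of entireness and the ultrametric property, but writing it out cleanly for a general $t$-module exponential (as opposed to $\exp_{\bC}$, whose coefficients $1/D_i$ one knows explicitly) is the one step that requires care.
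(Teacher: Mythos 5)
Your proof is correct, and it follows the same overall strategy as the paper: lift $\Xi$ to $Z\in\CC_\infty^n$ via surjectivity of $\exp_n$, contract by $(\partial[t^m]_n)^{-1}$, and recover a $t^m$-division point $\bu_m=\exp_n\bigl((\partial[t^m]_n)^{-1}Z\bigr)$ via the functional equation. Where you diverge is in the final convergence step. The paper invokes the (non-archimedean) inverse function theorem to produce neighborhoods $U,V$ of the origin on which $\exp_n$ is a homeomorphism and $\log_n$ converges, and then picks $m$ so that $(\partial[t^m]_n)^{-1}(Z)\in U$. You instead give a direct quantitative estimate: using entireness of $\exp_n$ you show that $\|\exp_n(W)\|_\infty\leq\|W\|_\infty$ once $\|W\|_\infty$ is small (the key inequality $|\alpha_i|_\infty\|W\|_\infty^{q^i-1}\leq 1$ for all $i\geq 1$, which entireness indeed guarantees since $\|\alpha_i\|^{1/(q^i-1)}\to 0$), conclude $\|\bu_m\|_\infty\to 0$, and then observe that $\bu_m$ eventually lies in the explicitly given polydisc $\mathbb{D}_n$ on which $\log_n$ converges. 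Your route is more explicit and cleanly ties the conclusion to the stated convergence domain $\mathbb{D}_n$, whereas the paper's is softer but shorter. You also supply a point the paper leaves implicit: that $\bu_m$ is actually in $\bC^{\otimes n}(\ok)$, justified by finiteness of the $A$-morphism $[t^m]_n$ so that the fiber over an $\ok$-point consists of $\ok$-points. One tiny remark in your favor: the paper asserts $\|\partial[t^m]_n^{-1}\|<|\theta|_\infty^{-m}$ strictly, but the diagonal entries of $\partial[t^m]_n^{-1}$ are exactly $\theta^{-m}$, so the correct bound is $\leq$, which is what you state; this has no effect on either argument.
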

\begin{proof}
We recall that $\exp_{n}$ is an entire function on $\CC_{\infty}^{n}$ and is of the form $\exp_{n}=I_{n}+\sum_{i=1}^{\infty}Q_{i}\tau^{i}$. By the inverse function theorem, there exists open subsets $U,V\subset \CC_{\infty}^{n}$ so that $\exp_{n}:U\rightarrow V$ is one to one and its inverse is also continuous. Note that as formal power series $\log_{n}$ is inverse to $\exp_{n}$ and so $\log_{n}$ is defined on $V$. Since $\exp_{n}$ is surjective, there exists a vector $X\in \CC_{\infty}^{n}$ for which $\exp_{n}\left( X\right)=\Xi$.

For each positive integer $m$, $\partial [t^{m}]_{n}$ is an upper triangular matrix with $\theta^{m}$ down to the diagonals, and the other nonzero entries off the diagonals have degrees in $\theta$ strictly less than $m$. It follows that if we define $\parallel \partial[t^{m}]_{n}^{-1} \parallel$ to be the maximum of the absolute values of entries of $\partial[t^{m}]_{n}^{-1} $, then we have \[\parallel  \partial[t^{m}]_{n}^{-1} \parallel< |\theta|_{\infty}^{-m} .\]
 Hence, we can pick a sufficiently large integer $m$ for which
 \[   \partial[t^{m}]_{n}^{-1} \left( X\right)\in U  .\]
Put $\bu:=\exp_{n}\left( \partial[t^{m}]_{n}^{-1} \left( X\right)   \right)\in V$, at which $\log_{n}$ converges. By the functional equation of $\exp_{n}$, we observe that
\[[t^{m}]_{n}\left( \bu\right)=[t^{m}]_{n}\left(  \exp_{n}\left( \partial[t^{m}]_{n}^{-1} \left( X\right)   \right) \right)=\exp_{n}\left(X \right)=\Xi   .\]
\end{proof}

\subsection{Proof of Theorem~\ref{T1:LogMZV}}  Put $n:=s_{1}+\cdots+s_{r}$ and $\fs':=(s_{2},\ldots,s_{r})\in \NN^{r-1}$. We let $(E_{\fs}',\bv_{\fs},a_{\fs})$ (resp. $(E_{\fs'}',\bv_{\fs'},a_{\fs'})$) be given in Theorem~\ref{T:CPY-criterion} corresponding to $\fs$ (resp. $\fs'$). Note that in this setting we have the following exact sequence of $t$-modules defined over $A$:
\[
\xymatrix{
0\ar[r] & \bC^{\otimes n}(\bar{k}) \ar[r]  & E_{\fs}'(\bar{k}) \ar@{->>}[r]^{\pi}  & E_{\fs'}'(\bar{k}) \ar[r]  & 0,
}
\]
and note that the projection $\pi$ maps $\bv_{\fs}$ to $\bv_{\fs'}$ (see the proof \cite[Thm.~6.1.1]{CPY14}).

To simplify the notation, we drop the subscript and put $\alpha:=\alpha_{\fs}:=a_{\fs'}$. Since by hypothesis $\zeta_{A}(\fs')$ is Eulerian, Theorem~\ref{T:CPY-criterion} shows that $\bv_{\fs'}$ is an $\alpha$-torsion point in $E_{\fs'}'$. It follows that $\Xi_{\fs}:=\rho_{\alpha} (\bv_{\fs})\in \Ker\pi$, and so we  identify $\Xi_{\fs}$ as a point in $\bC^{\otimes n}(A)$. Recall that $\rho$ is the map defining the $\FF_{q}[t]$-module structure on $E_{\fs}'$ (see (\ref{E:E',rho})).

We write $\Xi_{\fs}=(v_{1},\ldots,v_{n})^{\tr}\in \bC^{\otimes n}(A)$ and let $g$ be the polynomial in $A[t]$ associated to $\Xi_{\fs}$ in (\ref{E:fu}). By Lemma~\ref{L:DivisionPoint} there exists a positive integer $m$ and $\bu_{\fs}\in \bC^{\otimes n}(\ok)$ (depending on $m$) so that
 $[t^{m}]_{n}( \bu_{\fs})=\Xi_{\fs}$ and $\log_{n}(\bu_{\fs})$ converges. We write $\bu_{\fs}=(u_{1},\ldots,u_{n})^{\tr}\in \bC^{\otimes n}(\ok)$ and let $f$ be the polynomial in $\ok[t]$ associated to $\bu_{\fs}$ in (\ref{E:fu}).

We recall the isomorphism $\Delta_{n}:\Ext_{\cF}^{1}\left( {\bf{1}},C^{\otimes n} \right)\cong \bC^{\otimes n}(\ok)$ and note that $\Delta_{n}$ maps $[M_{f}]$ (resp. $[M_{g}]$) to $\bu_{\fs}$ (resp. $\Xi_{\fs}$) by \eqref{E:Isom C otimes n}, where $M_{f}$ (resp. $M_{g}$) is Frobenius module defined by the matrix
\[\Phi_{f}:= \left(
     \begin{array}{cc}
       (t-\theta)^{n} & 0 \\
       f^{(-1)}(t-\theta)^{n} & 1 \\
     \end{array}
   \right)
  {\rm{(}}\hbox{ resp. } \Phi_{g}:= \left(
     \begin{array}{cc}
       (t-\theta)^{n} & 0 \\
       g^{(-1)}(t-\theta)^{n} & 1 \\
     \end{array}
   \right) {\rm{)}}.\]

Note that
\[ \Delta_{n}\left( [t^{m}*M_{f}] \right)=[t^{m}]_{n}\left(\Delta_{n}( [M_{f}])\right)=[t^{m}]_{n}\left(\bu_{\fs} \right)=\Xi_{\fs} ,  \]
where $t^{m}*M_{f}$ is the Frobenius module defined by \[t^{m}* \Phi_{f}:=\left(
     \begin{array}{cc}
       (t-\theta)^{n} & 0 \\
       t^{m}f^{(-1)}(t-\theta)^{n} & 1 \\
     \end{array}
   \right) \] (see~\cite[\S\S~2.4]{CPY14}). It follows that $\Phi_{g}$ and $t^{m}*\Phi_{f}$ define the same class of Frobenius modules in $\Ext_{\cF}^{1}\left( {\bf{1}},C^{\otimes n} \right)$ since the classes of their defining Frobenius modules are mapped to same point $\Xi_{\fs}$ under $\Delta_{n}$. In other words, there exists a polynomial $h\in \ok[t]$ satisfying the following Frobenius difference equations
   \[  \left(
         \begin{array}{cc}
           1 & 0 \\
           h & 1 \\
         \end{array}
       \right)^{(-1)} \left(
                        \begin{array}{cc}
                          (t-\theta)^{n} & 0 \\
                          g^{(-1)}(t-\theta)^{n} & 1 \\
                        \end{array}
                      \right)
       = \left(
           \begin{array}{cc}
             (t-\theta)^{n} & 0 \\
             t^{m}f^{(-1)}(t-\theta)^{n} & 1 \\
           \end{array}
         \right)
        \left(
                                                  \begin{array}{cc}
                                                    1 & 0 \\
                                                    h & 1 \\
                                                  \end{array}
                                                \right)
       ,\] from which we derive the following identity
 \begin{equation}\label{E:DiffE g tmg}
   \begin{array}{rl}
      & \left(
           \begin{array}{cc}
             I_{r} &  \\
             h^{(-1)},0,\ldots,0 & 1 \\
           \end{array}
         \right)
          \left(
               \begin{array}{cc}
                 \Phi' &  \\
              g^{(-1)}(t-\theta)^{n},0,\ldots, 0   & 1 \\
               \end{array}
             \right)\\
     = &  \left(
               \begin{array}{cc}
                 \Phi' &  \\
             t^{m} f^{(-1)}(t-\theta)^{n},0,\ldots, 0   & 1 \\
               \end{array}
             \right) \left(
           \begin{array}{cc}
             I_{r} &  \\
             h,0,\ldots,0 & 1 \\
           \end{array}
         \right), \\
   \end{array}
 \end{equation} where $\Phi':=\Phi_{\fs}'$ is given in (\ref{E:Phi s'}).

We define the following two matrices
\[\widetilde{\Phi}_{g}:= \left(
               \begin{array}{cc}
                 \Phi' &  \\
              g^{(-1)}(t-\theta)^{n},0,\ldots, 0   & 1 \\
               \end{array}
             \right)\hbox{ and }\widetilde{\Phi}_{t^{m}f}:= \left(
               \begin{array}{cc}
                 \Phi' &  \\
             t^{m} f^{(-1)}(t-\theta)^{n},0,\ldots, 0   & 1 \\
               \end{array}
             \right). \]
Let $\Psi':=  \Psi_{\fs}'\in \GL_{r}(\TT)$ (resp. $\Psi:= \Psi_{\fs}\in \GL_{r+1}(\TT)$) be given in (\ref{E:Psi'}) (resp. (\ref{E:Psi})) and note that $\Psi$ is of the form
\[
\Psi=\left(
       \begin{array}{cc}
         \Psi' &  \\
         \mu & 1 \\
       \end{array}
     \right)
 ,
\] where
\[ \mu=\left(\cL_{(s_{1},\ldots,s_{r})},\ldots,\cL_{s_{r}}  \right)   . \]
We further put
\[ \widetilde{\Psi}_{t^{m}f}:=  \left(
               \begin{array}{cc}
                 \Psi' & \\
              t^{m}\Omega^{n}\cL_{f},0,\ldots, 0   & 1 \\
               \end{array}
             \right) \] and note that using (\ref{E:DiffPsiPsi'}) we have
             \[   \widetilde{\Psi}_{t^{m}f}^{(-1)}= \widetilde{\Phi}_{t^{m}f} \widetilde{\Psi}_{t^{m}f}   .\]

{\bf{Claim}}: There exists a matrix $\nu$ of the form
\[  \nu=\left(
          \begin{array}{cc}
            I_{r} &  \\
          \nu_{1},\ldots,\nu_{r} & 1 \\
          \end{array}
        \right)\in \GL_{r+1}(\ok[t])
  \] so that
\[  \nu^{(-1)} \left( \alpha *\Phi \right)=\widetilde{\Phi}_{t^{m}f}\cdot \nu , \]
where \[\Phi:=\Phi_{\fs}:=\left(
                            \begin{array}{cc}
                              \Phi' &  \\
                              \varphi & 1 \\
                            \end{array}
                          \right)
  \hbox{ given in }\eqref{E:Phi s},\] and
  \[ \alpha*\Phi:=\left(
                    \begin{array}{cc}
                      \Phi' &  \\
                      \alpha \varphi & 1 \\
                    \end{array}
                  \right)
    .\]

We first assume the claim above to finish the proof. Define
\[ \alpha*\Psi:=\left(
                  \begin{array}{cc}
                    \Psi' &  \\
                    \alpha \mu & 1 \\
                  \end{array}
                \right)\in \GL_{m+1}(\TT).
    \] Since $\alpha\in \FF_{q}[t]$, using (\ref{E:DiffPsiPsi'}) we have
    \[ \left( \alpha*\Psi\right)^{(-1)}= \alpha*\Phi \cdot \alpha*\Psi   .\]
Note that the claim above implies
\[ \left( \nu \cdot \alpha*\Psi \right)^{(-1)} =\nu^{(-1)}\cdot \alpha*\Phi \cdot \alpha*\Psi=\widetilde{\Phi}_{t^{m}f}\left( \nu\cdot \alpha*\Psi    \right).   \]
In other words,  $\nu\cdot \alpha*\Psi  $ is also a {\it{fundamental matrix}} for $\widetilde{\Phi}_{t^{m}f}$ in the sense of \cite[\S\S~4.1.6]{P08} and hence by \cite[\S~4.1.6]{P08} that there exists a matrix $\gamma\in \GL_{r+1}(\FF_{q}(t))$ of the form
\[ \gamma=\left(
            \begin{array}{cc}
              I_{r} &  \\
              \gamma_{1},\ldots,\gamma_{r} & 1 \\
            \end{array}
          \right)
  \] so that
  \[
    \nu \cdot \alpha*\Psi=\widetilde{\Psi}_{t^{m}f}\cdot \gamma .\]
By comparing with the $(r+1,1)$-entries of both sides of the equation above we obtain the following identity
\begin{equation}\label{E:Key Identity}
 \nu_{1}\Omega^{n}+\nu_{2}\Omega^{s_{2}+\cdots+s_{r}}\cL_{s_{1}}+\cdots+\nu_{r}\Omega^{s_{r}}\cL_{(s_{1},\ldots,s_{r-1})}+\alpha\cL_{(s_{1},\ldots,s_{r})}=t^{m} \Omega^{n}\cL_{f}+\gamma_{1}.
\end{equation}
By Lemma~\ref{L:L theta qN} we have that for each $N\in \ZZ_{\geq 0}$
\[ \cL_{(s_{1},\ldots,s_{r})}(\theta^{q^{N}})=\cL_{(s_{1},\ldots,s_{r})}(\theta)^{q^{N}}=\left( \Gamma_{s_{1}}\cdots\Gamma_{s_{r}}\zeta_{A}(s_{1},\ldots,s_{r})/ \tilde{\pi}^{n}  \right)^{q^{N}}    ,\]
and by Lemma~\ref{L:LogPeriod} and \cite[Prop.~2.3.3]{CPY14} we have that for each $N\in \ZZ_{\geq 0}$,
\[ \left(\Omega^{n}\cL_{f}\right)(\theta^{q^{N}})= \left(\Omega^{n}\cL_{f}\right)(\theta)^{q^{N}}=\left( y/\tilde{\pi}^{n}\right)^{q^{N}} ,\] where $y$ is the last coordinate of $\log_{n}(\bu_{\fs})$ as $f$ is the polynomial associated to $\bu_{\fs}$. We mention that $\gamma_{1}$ must be in $\FF_{q}[t]$ since all other terms of \eqref{E:Key Identity} are in the Tate algebra $\TT$. Note that $\Omega$ has a simple zero at $t=\theta^{q^{N}}$ for each $N\in \NN$ and hence by Lemma~\ref{L:L theta qN}
\[  \Omega^{s_{2}+\cdots+s_{r}}\cL_{(s_{1})} (\theta^{q^{N}})=\cdots=\Omega^{s_{r}}\cL_{(s_{1},\ldots,s_{r-1})} (\theta^{q^{N}})=0    .\]
It follows that  specializing (\ref{E:Key Identity}) at $t=\theta^{q^{N}}$ for a positive integer $N$ and taking the $q^{N}$-th root we obtain the following identity
\[ \alpha(\theta)\Gamma_{s_{1}}\cdots\Gamma_{s_{r}}\zeta_{A}(\fs)=\theta^{m}y+\gamma_{1}(\theta)\tilde{\pi}^{n}    .\]
Now we put
\[ Y_{\fs}:=\partial[t^{m}]_{n}\cdot \log_{n}(\bu_{\fs})+\partial[\gamma_{1}]_{n} \lambda_{n}   \]
and note that the last coordinate of $Y_{\fs}$ is $ \theta^{m}y+\gamma_{1}(\theta)\tilde{\pi}^{n}=\alpha(\theta)\Gamma_{s_{1}}\cdots\Gamma_{s_{r}}\zeta_{A}(\fs)$. Since $\partial[\gamma_{1}]_{n} \lambda_{n}\in \Lambda_{n}$, by the functional equation of $\exp_{n}$ we see that $\exp_{n}(Y_{\fs})=[t^{m}]_{n}\left( \bu_{\fs}\right) =\Xi_{\fs}$, whence completing the proof of Theorem~\ref{T1:LogMZV}.

The rest task is to prove the claim above. We recall that $M'$ is the Frobenius module defined by the matrix $\Phi'$ with respect to a $\ok[t]$-basis $\left\{ m_{1},\ldots,m_{r}\right\}$ of $M'$, and we have the following isomorphism as $\FF_{q}[t]$-modules
\[ \Ext_{\cF}^{1}\left({\bf{1}},M'  \right)\cong E_{\fs}'(\ok)  \]
and note that $[M]$ is mapped to $\bv_{\fs}$. Hence $[\alpha* M]$ is mapped to \[\Xi_{\fs}:=\rho_{\alpha}( \bv_{\fs})\in \bC^{\otimes n}(A)\hookrightarrow E_{\fs}'(A).\]

We note that $M'$ is a free left $\ok[\sigma]$-module with a natural $\ok[\sigma]$-basis:
 \begin{equation}\label{E:basis}
 \left\{(t-\theta)^{s_{1}+\cdots+s_{r}-1}m_{1},\cdots,(t-\theta)m_{1},m_{1},\ldots,(t-\theta)^{s_{r}-1}m_{r},\ldots,(t-\theta)m_{r},m_{r} \right\}
 \end{equation}
 (see \cite[Proof of Thm.~5.2.1]{CPY14}). We recall that $g\in A[t]$ is the polynomial associated to $\Xi_{\fs}$ and hence via the isomorphism $\Ext_{\cF}^{1}\left({\bf{1}},M'  \right)\cong E_{\fs}'(\ok)$, the class of the Frobenius module defined by the matrix $\widetilde{\Phi}_{g}$ is mapped to $\Xi_{\fs}\in \bC^{\otimes n}(A)\hookrightarrow E_{\fs}'(A)$ via the basis $(\ref{E:basis})$ (see~\cite[(5.2.2)]{CPY14}). In other words, the two matrices $\widetilde{\Phi}_{g}$ and
\[ \alpha* \Phi=\left(
                    \begin{array}{cc}
                      \Phi' & {\bf{0}} \\
                    0,\ldots,\alpha H_{s_{r}-1}^{(-1)}(t-\theta)^{s_{r}} & 1 \\
                    \end{array}
                  \right)
   \] define the same class of Frobenius modules in $\Ext_{\cF}^{1}\left({\bf{1}},M'  \right)$. Therefore there exists a matrix
   \[ \delta=\left(
               \begin{array}{cc}
                 I_{r} &  \\
                 \delta_{1},\ldots,\delta_{r} & 1 \\
               \end{array}
             \right)\in \GL_{r+1}(\ok[t])
     \] so that
     \begin{equation}\label{E:DiffE delta Phig}
      \delta^{(-1)} \cdot \alpha*\Phi=\widetilde{\Phi}_{g}\cdot \delta  .
     \end{equation}

Put \[\eta:=\left(
              \begin{array}{cc}
                I_{r} & {\bf{0}} \\
                h,0,\ldots,0 & 1 \\
              \end{array}
            \right)\in \GL_{r+1}(\ok[t])
  \] and note that by (\ref{E:DiffE g tmg}) we have $\eta^{(-1)}\cdot \widetilde{\Phi}_{g}=\widetilde{\Phi}_{t^{m}f}\cdot \eta$. Putting $\nu:=\eta \delta\in \GL_{r+1}(\ok[t])$ and using (\ref{E:DiffE delta Phig}) and (\ref{E:DiffE g tmg}) we have the desired identity
  \[\nu^{(-1)}\cdot \alpha*\Phi= \eta^{(-1)}\delta^{(-1)} \cdot \alpha*\Phi =\eta^{(-1)}\cdot \widetilde{\Phi}_{g}\cdot \delta= \widetilde{\Phi}_{t^{m}f}\cdot \eta \delta =\widetilde{\Phi}_{t^{m}f} \cdot \nu. \] From the explicit forms of $\delta$ and $\eta$ we see that $\nu$ has the desired form, whence proving the claim above.

\section{Step~III: The Identity}
\subsection{The dimension formula}
In this section, we will give a  proof of Theorem~\ref{T:MainThmIntroduction}~(1). Combining Theorems~\ref{T1:LogMZV} and \ref{T:Yu's thm}, we first prove the following.
\begin{theorem}\label{T:DimHigherDepth}
Let $n\geq 2$ be an integer and let $\bv_{n}\in \bC^{\otimes n}(A)$ be the special point given in Theorem~\ref{T:AT90}. For $\fs=(s_{1},\ldots,s_{r})\in \NN^{r}$ with $r\geq 2$, we denote by $\fs':=(s_{2},\ldots,s_{r})$.  Put
\[\mathcal{E}_{n}:=\left\{\fs=(s_{1},\ldots,s_{r})\in \NN^{r}; r\geq 2, \wt(\fs)=n \hbox{ and }\zeta_{A}(\fs')\hbox{ is Eulerian}    \right\}.   \]
For each $\fs\in \mathcal{E}_{n}$, let $\Xi_{\fs}\in \bC^{\otimes n}(A)$ be given in Theorem~\ref{T1:LogMZV}. Then we have
\[ \dim_{k}\Sp_{k}\left\{ \tilde{\pi}^{n},\zeta_{A}(n),\zeta_{A}(\fs);\fs\in \mathcal{E}_{n}\right\} = 1+\rk_{\FF_{q}[t]} \Sp_{\FF_{q}[t]}\left\{\bv_{n}\right\}\cup_{\fs\in \mathcal{E}_{n}}\left\{ \Xi_{\fs} \right\}     .\]
\end{theorem}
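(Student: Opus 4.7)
The strategy is to introduce the $\FF_q[t]$-submodule $C \subset \Lie\bC^{\otimes n}(\CC_\infty)$ generated under the $\partial[\cdot]_n$-action by $\lambda_n$, $Y_n$, and $\{Y_\fs\}_{\fs\in\mathcal{E}_n}$, and to analyze it through two $\FF_q[t]$-linear projections: the ``last coordinate'' map $\pi:\Lie\bC^{\otimes n}\to \CC_\infty$ and the exponential map $\exp_n$. The first observation is that $\pi$ is $\FF_q[t]$-linear when $t$ acts on the domain via $\partial[t]_n$ and on the codomain via multiplication by $\theta$: indeed the bottom row of $\partial[t]_n$ is $(0,\ldots,0,\theta)$, so by induction the bottom row of $\partial[a]_n$ is $(0,\ldots,0,a(\theta))$ for every $a\in\FF_q[t]$.

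Under $\pi$, the generator $\lambda_n$ is sent to $\tilde{\pi}^n$, $Y_n$ to $\Gamma_n\zeta_A(n)$ (by Theorem~\ref{T:AT90}), and $Y_\fs$ to $\alpha_\fs(\theta)\Gamma_{s_1}\cdots\Gamma_{s_r}\zeta_A(\fs)$ (by Theorem~\ref{T1:LogMZV}). Thus $\pi(C)$ coincides with the $\FF_q[\theta]$-submodule $L$ of $\CC_\infty$ generated by these scalars. Since $\CC_\infty$ is torsion-free over $\FF_q[\theta]$, $L$ is free, and its rank is $\dim_k(L\otimes_{\FF_q[\theta]} k)$. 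Since the Carlitz factorials and each $\alpha_\fs(\theta)$ are nonzero elements of $k$, this rank equals
\[
d:=\dim_k \Sp_k\{\tilde{\pi}^n,\zeta_A(n),\zeta_A(\fs);\fs\in\mathcal{E}_n\}.
\]

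The crucial step—and the main obstacle—is to show that $\pi|_C$ is injective. Take $Y\in C$ with $\pi(Y)=0$, and write $Y=\partial[a_0]_n\lambda_n+\partial[a_1]_n Y_n+\sum_\fs \partial[a_\fs]_n Y_\fs$ for some $a_0,a_1,a_\fs\in\FF_q[t]$. The functional equation of $\exp_n$ together with $\lambda_n\in\Ker\exp_n$ yields
\[
\exp_n(Y)=[a_1]_n\bv_n+\sum_\fs [a_\fs]_n\Xi_\fs \in \bC^{\otimes n}(A)\subset \bC^{\otimes n}(\ok).
\]
Since $\pi(Y)=0$ is algebraic over $k$, Yu's theorem (Theorem~\ref{T:Yu's thm}) forces $Y=0$. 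Hence $\pi|_C: C \iso L$ as $\FF_q[t]$-modules, and $\rk_{\FF_q[t]} C=d$. This is where the transcendence input is essential: only the algebraicity of $\bv_n$ and the $\Xi_\fs$ makes $\exp_n(Y)$ algebraic so that Yu's theorem applies to the vanishing last coordinate.

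To conclude, consider the surjective $\FF_q[t]$-linear map $\exp_n|_C:C\twoheadrightarrow D:=\Sp_{\FF_q[t]}\{\bv_n\}\cup\{\Xi_\fs\}_{\fs\in\mathcal{E}_n}$. Its kernel is $C\cap\Ker\exp_n=C\cap\partial[\FF_q[t]]_n\lambda_n$, which equals the rank-one free module $\partial[\FF_q[t]]_n\lambda_n$ since $\lambda_n\in C$. Therefore
\[
\rk_{\FF_q[t]} D=\rk_{\FF_q[t]} C-1=d-1,
\]
which rearranges to the claimed identity.
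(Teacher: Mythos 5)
Your proof is correct, and it reorganizes the paper's argument into a cleaner module-theoretic frame while relying on exactly the same three ingredients: the $\FF_q[t]$-linearity of the last-coordinate map coming from $\partial[a]_n$ being upper triangular with $a(\theta)$ down the diagonal, the Anderson--Thakur description of $\Ker\exp_n=\partial[\FF_q[t]]_n\lambda_n$, and Yu's transcendence theorem applied to the last coordinate of $\log_n$ at algebraic points. The paper instead proves a two-way equivalence between $k$-linear dependence of $\{\tilde\pi^n,\zeta_A(n),\zeta_A(\fs)\}$ and $\FF_q[t]$-linear dependence of $\{\bv_n,\Xi_\fs\}$, which in the direction ($\Rightarrow$) lifts an $\FF_q[t]$-relation among the algebraic points to the kernel element $Y\in\partial[\FF_q[t]]_n\lambda_n$ and reads off a $k$-relation from its last coordinate, and in the direction ($\Leftarrow$) clears denominators and invokes Yu's theorem to kill $Y$; the dimension identity then follows because this argument actually exhibits a rank-preserving correspondence of relation spaces, though the paper frames the reduction somewhat informally as ``it suffices to prove the equivalence.'' Your version makes that bookkeeping precise and transparent: forming $C$ and comparing its rank through the two $\FF_q[t]$-linear surjections $\pi\colon C\iso L$ (injective by Yu) and $\exp_n\colon C\twoheadrightarrow D$ (with kernel exactly $\partial[\FF_q[t]]_n\lambda_n$, of rank one) yields $d=\rk C=\rk D+1$ in one stroke, and in particular explains the $1$ in the formula as the rank of the lattice of periods. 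One minor point worth keeping explicit: the argument implicitly uses that $\mathcal{E}_n$ is finite (so $C$ is finitely generated over $\FF_q[t]$, allowing you to use rank additivity in the short exact sequence) and that the normalizing scalars $\Gamma_n$, $\alpha_\fs(\theta)\Gamma_{s_1}\cdots\Gamma_{s_r}$ are nonzero elements of $k$, both of which do hold; with those noted, the proof is complete.
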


\begin{proof}  To prove the theorem, it suffices to prove the following equivalent statements:
\[
  \begin{array}{rl}
     & \left\{\tilde{\pi}^{n},\zeta_{A}(n), \zeta_{A}(\fs);\fs\in \mathcal{E}_{n}  \right\}{\hbox{ are linearly dependent over }}k    \\
& \\
  \Leftrightarrow   &  \left\{ \bv_{n}\right\}\cup_{\fs\in \mathcal{E}_{n}} \left\{\Xi_{\fs} \right\} {\hbox{ are linearly dependent over }}\FF_{q}[t].     \\
  \end{array}
\]

Proof of ($\Rightarrow$). Suppose that there exist polynomials (not all zero) $\left\{\eta \right\}\cup  \left\{\eta_{\fs} \right\}_{\fs\in \mathcal{E}_{n}} \subseteq \FF_{q}[t]$ for which \[[\eta]_{n}(\bv_{n})+ \sum_{\fs\in \mathcal{E}_{n}}[\eta_{\fs}]_{n} \left( \Xi_{\fs}\right)={\bf{0}}.\] By Theorem~\ref{T:AT90}, there exists a vector $Y_{n}$ of the form
 \[ Y_{n}=\left(
            \begin{array}{c}
              * \\
              \vdots \\
              \Gamma_{n}\zeta_{A}(n) \\
            \end{array}
          \right)\in \CC_{\infty}^{n}
    \]
 for which $\exp_{n}(Y_{n})=\bv_{n}$.   For each $\fs\in \mathcal{E}_{n}$, by Theorem~\ref{T1:LogMZV}  there exists vectors $Y_{\fs}\in \CC_{\infty}^{n}$  satisfying the property in Theorem~\ref{T1:LogMZV}. We define the vector
\[ Y:=\partial[\eta]_{n}Y_{n}+ \sum_{\fs\in\mathcal{E}_{n}}\partial[\eta_{\fs}]_{n} Y_{\fs} \]
and note that
\[\exp_{n}(Y)=[\eta]_{n}\left(\exp_{n}(Y_{n})\right) +   \sum_{\fs\in \mathcal{E}_{n}}[\eta_{\fs}]_{n}\left( \exp_{n}(Y_{\fs})\right)=[\eta]_{n}(\bv_{n}) +\sum_{\fs\in \mathcal{E}_{n}} [\eta_{\fs}]_{n}\left( \Xi_{\fs}\right)={\bf{0}}, \]
whence
\[ Y\in \Ker \exp_{n}=\partial[\FF_{q}[t]]_{n} \lambda_{n}   .\]
Note that the last coordinate of $\lambda_{n}$ is $\tilde{\pi}^{n}$ and that for each $a\in\FF_{q}[t]$, $\partial[a]_{n}$ is an upper triangular matrix with $a(\theta)$ down the diagonals. Taking the last coordinates from both sides of the equality above gives the desired result.

Proof of ($\Leftarrow$). We suppose that there exist polynomials $\left\{ \delta_{0},\delta, \delta_{\fs};\fs\in \mathcal{E}_{n} \right\}\in A$ (not all zero) so that
\[ \delta_{0}\tilde{\pi}^{n}+\delta \zeta_{A}(n)  +\sum_{\fs\in \mathcal{E}_{n}}\delta_{\fs}\zeta_{A}(\fs)   =0,\] which can be also written as
\[\delta_{0}\tilde{\pi}^{n} + \frac{\delta}{\Gamma_{n}}\Gamma_{n}\zeta_{A}(n)  +\sum_{\fs\in \mathcal{E}_{n}} \frac{\delta_{\fs}}{\alpha_{\fs}(\theta)\Gamma_{\fs} }\alpha_{\fs}(\theta)\Gamma_{\fs}\zeta_{A}(\fs)    =0 ,\] where $\Gamma_{\fs}:=\Gamma_{s_{1}}\ldots\Gamma_{s_{r}}$ for $\fs=(s_{1},\ldots,s_{r})$ and $\alpha_{\fs}\in \FF_{q}[t]$ is given in Theorem~\ref{T1:LogMZV}.   Multiplying a common denominator of the coefficients of the equation above shows that there exist polynomials $\left\{\eta_{0},\eta, \zeta_{\fs};\fs\in \mathcal{E}_{n}  \right\}\subseteq   \FF_{q}[t]$ (not all zero) so that
\[  \eta_{0}(\theta)\tilde{\pi}^{n}+\eta(\theta)\Gamma_{n}\zeta_{A}(n)  +\sum_{\fs \in \mathcal{E}_{n}}\eta_{\fs}(\theta) \alpha_{\fs}\Gamma_{\fs}\zeta_{A}(\fs)=0  .\]
For each $\fs\in \mathcal{E}_{n}$, let $Y_{\fs}\in \CC_{\infty}^{n}$ be given in Theorem~\ref{T1:LogMZV} and note that its last coordinate is $\alpha_{\fs}(\theta)\Gamma_{\fs}\zeta_{A}(\fs)$. So the last coordinate of \[Y:= \partial[\eta_{0}]_{n}\lambda_{n}+\partial[\eta]_{n}Y_{n} +\sum_{\fs\in \mathcal{E}_{n}}\partial[\eta_{\fs}]_{n} Y_{\fs}\]
is zero by the equation above. Since
\[\exp_{n}(Y)=\exp_{n}\left( \partial[\eta_{0}]_{n}\lambda_{n}+\partial[\eta]_{n}Y_{n} +\sum_{\fs\in \mathcal{E}_{n}}\partial[\eta_{\fs}]_{n} Y_{\fs}    \right)=[\eta]_{n}\left(\bv_{n} \right)+  \sum_{\fs\in \mathcal{E}_{n}}[\eta_{\fs}]_{n} \left(\Xi_{\fs} \right)\in \bC^{\otimes n}(A),
\] Theorem~\ref{T:Yu's thm} implies that $Y$ has to be zero, and hence
\[ [\eta]_{n}\left(\bv_{n} \right)+ \sum_{\fs\in \mathcal{E}_{n}}[\eta_{\fs}]_{n} \left(\Xi_{\fs} \right)={\bf{0}}  .\]
\end{proof}

\begin{remark}
For a positive integer $n$, we recall that $\zeta_{A}(n)/\tilde{\pi}^{n}\in k$ for  $A$-even $n$ by \cite{Ca35} and in which case $\bv_{n}$ is an $\FF_{q}[t]$-torsion point by Remark~\ref{Rem:TorsionEven}. When $n$ is $A$-odd, we have $\tilde{\pi}^{n}\notin k_{\infty}$ and so $\tilde{\pi}^{n}$ is $k$-linearly independent from MZV's, whence from the proof above we derive that
\[[\eta]_{n}(\bv_{n})+\sum_{\fs\in \mathcal{E}_{n}}[\eta_{\fs}]_{n} (\Xi_{\fs})={\bf{0}}\]
if and only if
\[ \eta(\theta)\Gamma_{n}\zeta_{A}(n)+\sum_{\fs\in \mathcal{E}_{n}}\eta_{\fs}(\theta)\Gamma_{\fs}\zeta_{A}(\fs)=0   \]
for $\left\{\eta\right\}\cup_{\fs\in \mathcal{E}_{n}}\left\{\eta_{\fs} \right\}\subseteq \FF_{q}[t] $.
\end{remark}

By this remark, we immediately obtain the following two consequences.
\begin{corollary}\label{C:DimHigherDepth}
For an integer $n \geq2$, we continue with the notation in Theorem~\ref{T:DimHigherDepth}. Then we have
 \[
   \begin{array}{rl}
      &
 \dim_{k} \Sp_{k}\left\{\zeta_{A}(n),\zeta_{A}(\fs);\fs\in \mathcal{E}_{n}\right\}  \\
      &  \\
     = & \begin{cases}
     & 1+\rk_{\FF_{q}[t]} \Sp_{\FF_{q}[t]}\left\{ \Xi_{\fs}; \fs\in \mathcal{E}_{n} \right\}  \hbox{ if }n\hbox{ is }A\hbox{-even},  \\
     &\rk_{\FF_{q}[t]} \Sp_{\FF_{q}[t]}\left\{\bv_{n}, \Xi_{\fs}; \fs\in \mathcal{E}_{n} \right\} \hbox{ if }n\hbox{ is }A\hbox{-odd}.  \\
     \end{cases} \\
   \end{array}
\]

\end{corollary}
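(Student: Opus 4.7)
The plan is to deduce this corollary from Theorem~\ref{T:DimHigherDepth} by a straightforward case analysis on the parity of $n$, using only the two facts collected in the remark preceding the corollary: that $\zeta_A(n)/\tilde\pi^n\in k$ when $n$ is $A$-even (Carlitz), and that $\tilde\pi^n\notin k_\infty$ when $n$ is $A$-odd. The first fact collapses the generating set on the $k$-side, while the second lets us peel off $\tilde\pi^n$ as a free generator. On the $\FF_q[t]$-side, the corresponding input is Remark~\ref{Rem:TorsionEven}, which tells us that $\bv_n$ is an $\FF_q[t]$-torsion point in $\bC^{\otimes n}(A)$ exactly when $n$ is $A$-even.

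First I would treat the $A$-even case. Since $\zeta_A(n)\in k\cdot\tilde\pi^n$, the two sets $\{\tilde\pi^n,\zeta_A(n),\zeta_A(\fs);\fs\in\mathcal{E}_n\}$ and $\{\zeta_A(n),\zeta_A(\fs);\fs\in\mathcal{E}_n\}$ have the same $k$-span, so the dimension on the left of Theorem~\ref{T:DimHigherDepth} equals the one we want to compute. On the right, the $\FF_q[t]$-torsion of $\bv_n$ implies $\Sp_{\FF_q[t]}\{\bv_n\}$ contributes nothing to the $\FF_q[t]$-rank, so $\rk_{\FF_q[t]}\Sp_{\FF_q[t]}(\{\bv_n\}\cup\{\Xi_\fs\}_{\fs\in\mathcal{E}_n})=\rk_{\FF_q[t]}\Sp_{\FF_q[t]}\{\Xi_\fs\}_{\fs\in\mathcal{E}_n}$. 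Combining yields exactly the $A$-even formula.

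Next I would handle the $A$-odd case. Here $\tilde\pi^n$ lies outside $k_\infty$, while every MZV lies in $k_\infty$, so $\tilde\pi^n$ is $k$-linearly independent from the subspace $\Sp_k\{\zeta_A(n),\zeta_A(\fs);\fs\in\mathcal{E}_n\}$. Consequently the left-hand side of Theorem~\ref{T:DimHigherDepth} equals $1+\dim_k\Sp_k\{\zeta_A(n),\zeta_A(\fs);\fs\in\mathcal{E}_n\}$, and subtracting $1$ from both sides of the theorem gives the $A$-odd formula.

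There is essentially no obstacle beyond correctly bookkeeping the two parity inputs; the heavy lifting has already been done in Theorem~\ref{T:DimHigherDepth} (and, behind it, in Theorems~\ref{T1:LogMZV} and \ref{T:Yu's thm}). The only subtlety worth stating explicitly is the compatibility between the two sides in the $A$-even case, namely that collapsing $\tilde\pi^n$ into $\zeta_A(n)$ on the $k$-side corresponds exactly to discarding the torsion point $\bv_n$ on the $\FF_q[t]$-side — which is precisely the content of Remark~\ref{Rem:TorsionEven}.
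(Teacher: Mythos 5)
Your proposal is correct and follows essentially the same route the paper takes: the corollary is indeed deduced from Theorem~\ref{T:DimHigherDepth} by a parity case split, using Carlitz's theorem together with Remark~\ref{Rem:TorsionEven} to discard $\tilde\pi^{n}$ and $\bv_{n}$ simultaneously when $n$ is $A$-even, and using $\tilde\pi^{n}\notin k_{\infty}$ to peel off a free generator and subtract $1$ from both sides when $n$ is $A$-odd. The paper's preceding remark phrases the $A$-odd step as an explicit equivalence between the $\FF_{q}[t]$-linear relations on the two sides rather than as a dimension subtraction, but that is a purely cosmetic difference.
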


\begin{corollary}\label{T:DimZetaEven}
Let $n\geq 2$ be a positive integer. Put
\[ \mathcal{V}:=\left\{ (s_{1},s_{2})\in \NN^{2}; s_{1}+s_{2}=n\hbox{ and }(q-1)|s_{2}  \right\}  .\] For each $\fs\in \mathcal{V}$, let $\Xi_{\fs}$ be given in Theorem~\ref{T1:LogMZV}. Then we have
\[ \dim_{k}\Sp_{k}\left\{\tilde{\pi}^{n},\zeta_{A}(\fs);\fs\in \mathcal{V}  \right\} = 1+\rk_{\FF_{q}[t]}\Sp_{\FF_{q}[t]}\left\{\Xi_{\fs} \right\}_{\fs\in \mathcal{V}}    .\]
\end{corollary}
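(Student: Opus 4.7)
The plan is to run the argument of Theorem~\ref{T:DimHigherDepth} after excising $\zeta_{A}(n)$ from the left-hand spanning set and $\bv_{n}$ from the right-hand list. For each $\fs\in \cV$, Theorem~\ref{T1:LogMZV} furnishes $Y_{\fs}\in \CC_{\infty}^{n}$ whose last coordinate is $\alpha_{\fs}(\theta)\Gamma_{s_{1}}\Gamma_{s_{2}}\zeta_{A}(\fs)$ and which satisfies $\exp_{n}(Y_{\fs})=\Xi_{\fs}$. The core of the proof is to match $\FF_{q}[t]$-linear relations among $\{\Xi_{\fs}\}_{\fs\in \cV}$ with $k$-linear relations among $\{\tilde{\pi}^{n},\zeta_{A}(\fs)\}_{\fs\in \cV}$, from which the claimed dimension-rank identity will follow by bookkeeping.

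Concretely, given an $\FF_{q}[t]$-linear relation $\sum_{\fs}[\eta_{\fs}]_{n}\Xi_{\fs}=0$, I set $Y:=\sum \partial[\eta_{\fs}]_{n}Y_{\fs}$; since $\exp_{n}(Y)=0$, we have $Y=\partial[a]_{n}\lambda_{n}$ for a unique $a\in \FF_{q}[t]$, and reading the last coordinate yields the $k$-relation $a(\theta)\tilde{\pi}^{n}-\sum \eta_{\fs}(\theta)\alpha_{\fs}(\theta)\Gamma_{s_{1}}\Gamma_{s_{2}}\zeta_{A}(\fs)=0$. In the other direction, a nontrivial $k$-linear relation among $\{\tilde{\pi}^{n},\zeta_{A}(\fs)\}_{\fs}$ can, after clearing denominators, be put in the form $a(\theta)\tilde{\pi}^{n}+\sum \eta_{\fs}(\theta)\alpha_{\fs}(\theta)\Gamma_{s_{1}}\Gamma_{s_{2}}\zeta_{A}(\fs)=0$ with $a,\eta_{\fs}\in \FF_{q}[t]$; setting $Y:=\partial[a]_{n}\lambda_{n}+\sum \partial[\eta_{\fs}]_{n}Y_{\fs}$, the last coordinate of $Y$ vanishes by construction while $\exp_{n}(Y)=\sum [\eta_{\fs}]_{n}\Xi_{\fs}$ lies in $\bC^{\otimes n}(\ok)$. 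Yu's Theorem~\ref{T:Yu's thm} then forces $Y=0$ and hence $\sum [\eta_{\fs}]_{n}\Xi_{\fs}=0$. In each direction the resulting relation is nontrivial because $a$ is uniquely determined by the $\eta_{\fs}$'s.

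To pass from this bijection of relations to the stated dimension-rank equality, observe that the correspondence identifies the kernel of the $\FF_{q}[t]$-linear map $\psi\colon \FF_{q}[t]^{|\cV|}\to \bC^{\otimes n}(\ok)$, $(\eta_{\fs})\mapsto \sum [\eta_{\fs}]_{n}\Xi_{\fs}$, with an $\FF_{q}[t]$-sublattice of the kernel of the $k$-linear evaluation map $\Phi\colon k\oplus k^{|\cV|}\to \CC_{\infty}$, $(c,(\delta_{\fs}))\mapsto c\tilde{\pi}^{n}+\sum \delta_{\fs}\zeta_{A}(\fs)$; since every $k$-relation rescales into an integral one, this sublattice spans $\ker\Phi$ over $k$, giving $\rk_{\FF_{q}[t]}\ker\psi=\dim_{k}\ker\Phi$. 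Applying rank-nullity on both sides then delivers the desired formula. The main obstacle is the Yu-theoretic rigidity step---ensuring that a vanishing algebraic last coordinate really forces $Y=0$---which is the substantive input that converts a transcendence question into a combinatorial rank computation; everything else is careful translation.
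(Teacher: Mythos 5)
Your proposal is correct and follows exactly the paper's intended route: the paper states that Corollary~\ref{T:DimZetaEven} ``immediately'' follows from the remark after Theorem~\ref{T:DimHigherDepth}, which is shorthand for re-running the Theorem~\ref{T:DimHigherDepth} argument on the subset $\cV\subseteq\mathcal{E}_{n}$ with $\zeta_{A}(n)$ and $\bv_{n}$ dropped --- precisely what you do. Your closing paragraph, identifying the $\FF_{q}[t]$-module of relations among $\{\Xi_{\fs}\}$ with an $A$-sublattice that $k$-spans the kernel of the evaluation map and then invoking rank-nullity, makes explicit the rank-matching step that the paper leaves implicit; the rest (Yu rigidity forcing $Y=0$, uniqueness of $a$ via injectivity of $a\mapsto\partial[a]_{n}\lambda_{n}$, nontriviality because $a\ne0$ with all $\eta_{\fs}=0$ would give $a(\theta)\tilde{\pi}^{n}=0$) is the same as in the paper.
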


\subsection{Proof of Theorem~\ref{T:MainThmIntroduction}~(1)} Here we give a proof for part (1) of Theorem~\ref{T:MainThmIntroduction}, which is addressed as the following result.

\begin{corollary}\label{C:ReMainThm1}
Let $n\geq 2$ be an integer. Put
\[ \mathcal{S}:=\left\{ \tilde{\pi}^{n},\zeta_{A}(1,n-1),\zeta_{A}(2,n-2),\ldots,\zeta_{A}(n-1,1) \right\} ,\]
and
\[ \mathcal{V}:=\left\{ (s_{1},s_{2})\in \NN^{2}; s_{1}+s_{2}=n\hbox{ and }(q-1)|s_{2}  \right\}  .\] For each $\fs\in \mathcal{V}$, let $\Xi_{\fs}$ be given in Theorem~\ref{T1:LogMZV}. Then we have
\[ \dim_{k} \Sp_{k}\mathcal{S}  =n- \lfloor \frac{n-1}{q-1} \rfloor+\rank_{\FF_{q}[t]} \Sp_{\FF_{q}[t]}\left\{\Xi_{\fs} \right\}_{\fs\in \mathcal{V}}    .\]
\end{corollary}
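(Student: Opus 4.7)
The plan is to split $\mathcal{S}$ into the distinguished subset $\mathcal{S}' := \{\tilde{\pi}^n\}\cup\{\zeta_A(\fs):\fs\in\mathcal{V}\}$ and its complement, show that the complement contributes to the dimension in a purely combinatorial way, and then invoke Corollary~\ref{T:DimZetaEven} on $\mathcal{S}'$. The size of $\mathcal{S}$ is $n$ (one element $\tilde{\pi}^n$ together with the $n-1$ depth-two double zeta values of weight $n$). The subset $\mathcal{S}'$ consists of $\tilde{\pi}^n$ together with those $\zeta_A(s_1,s_2)$ with $1\le s_2\le n-1$ and $(q-1)\mid s_2$; the number of admissible $s_2$ is precisely $\lfloor (n-1)/(q-1)\rfloor$. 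Hence
\[
|\mathcal{S}\setminus\mathcal{S}'| \;=\; n-1-\left\lfloor\tfrac{n-1}{q-1}\right\rfloor.
\]

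Next, I invoke Corollary~\ref{C:NecCondDepth2} (Step~I): every $k$-linear relation among $\mathcal{S}$ must have zero coefficient on each $\zeta_A(s_1,s_2)$ with $(q-1)\nmid s_2$. This immediately implies that the elements of $\mathcal{S}\setminus\mathcal{S}'$ are $k$-linearly independent among themselves and also $k$-linearly independent from $\Sp_k\mathcal{S}'$. Therefore the span decomposes as a direct sum of vector subspaces, giving
\[
\dim_k\Sp_k\mathcal{S} \;=\; |\mathcal{S}\setminus\mathcal{S}'| + \dim_k\Sp_k\mathcal{S}'.
\]

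Finally, Corollary~\ref{T:DimZetaEven} computes the second summand:
\[
\dim_k\Sp_k\mathcal{S}' \;=\; 1+\rank_{\FF_q[t]}\Sp_{\FF_q[t]}\{\Xi_\fs\}_{\fs\in\mathcal{V}}.
\]
Adding these two equalities yields
\[
\dim_k\Sp_k\mathcal{S} \;=\; \Bigl(n-1-\bigl\lfloor\tfrac{n-1}{q-1}\bigr\rfloor\Bigr) + 1+\rank_{\FF_q[t]}\Sp_{\FF_q[t]}\{\Xi_\fs\}_{\fs\in\mathcal{V}} \;=\; n-\bigl\lfloor\tfrac{n-1}{q-1}\bigr\rfloor+\rank_{\FF_q[t]}\Sp_{\FF_q[t]}\{\Xi_\fs\}_{\fs\in\mathcal{V}},
\]
which is the desired identity. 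No real obstacle remains at this stage: all the serious transcendence- and motive-theoretic input has already been absorbed into Corollaries~\ref{C:NecCondDepth2} and \ref{T:DimZetaEven}, so the proof of Corollary~\ref{C:ReMainThm1} is a clean bookkeeping step. The only point to be careful about is the count of $s_2\in\{1,\ldots,n-1\}$ divisible by $q-1$, but this is immediate.
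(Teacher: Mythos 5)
Your proof is correct and is essentially identical to the paper's: the paper likewise splits $\mathcal{S}$ into the subset $\{\tilde{\pi}^{n}\}\cup\{\zeta_{A}(\fs):\fs\in\mathcal{V}\}$ and its complement, uses Corollary~\ref{C:NecCondDepth2} to justify the dimension additivity, counts the complement as $n-1-\lfloor\frac{n-1}{q-1}\rfloor$, and then applies Corollary~\ref{T:DimZetaEven} to the distinguished subset.
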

\begin{proof}
Note that we have the following equalities
\[
     \begin{array}{rl}
      \dim_{k} \Sp_{k}\mathcal{S} = & |\mathcal{S}\backslash \left\{\tilde{\pi}^{n}, \zeta_{A}(\fs); \fs\in \mathcal{V}  \right\}| +\dim_{k} \Sp_{k}\left\{ \tilde{\pi}^{n},\zeta_{A}(\fs);\fs\in \mathcal{V} \right\} \\
       = & \left(n-1-\lfloor \frac{n-1}{q-1} \rfloor \right)+\left(1+\rank_{\FF_{q}[t]} \Sp_{\FF_{q}[t]}\left\{\Xi_{\fs} \right\}_{\fs\in \mathcal{V}}  \right) \\
       = &n- \lfloor \frac{n-1}{q-1} \rfloor+\rank_{\FF_{q}[t]} \Sp_{\FF_{q}[t]}\left\{\Xi_{\fs} \right\}_{\fs\in \mathcal{V}},  \\
     \end{array}
   \]where the first equality comes from Corollary~\ref{C:NecCondDepth2}, and the second equality comes from Corollary~\ref{T:DimZetaEven}.
\end{proof}

\begin{remark}
Recently, some algebraic independence results of certain MZV's are obtained by Mishiba~\cite{M14}, but the coordinates of those MZV's are restricted to be $A$-odd with other hypotheses. Concerning this issue, we refer the reader to \cite{M14}.
\end{remark}

\section{Step~IV: A Siegel's Lemma}
\subsection{The main result}
The primary result in this section is the following theorem, which implies Theorem~\ref{T:MainThmIntroduction}~(2) and so allows us to compute the exact quantity in Theorem~\ref{T:MainThmIntroduction}~(1).
\begin{theorem}\label{T:Siegel} Let $n$ be a positive integer and $\bv_{1},\ldots,\bv_{m}\in \bC^{\otimes n}(A)$. Then we have an effective algorithm to compute the following rank
\[  r_{m}:=\rk_{\FF_{q}[t]} \Sp_{\FF_{q}[t]}\left\{ \bv_{1},\ldots,\bv_{m} \right\}   .\]
\end{theorem}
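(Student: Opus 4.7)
My approach reduces the rank problem to finite-dimensional $\FF_q$-linear algebra by turning $\FF_q[t]$-linear relations in $\bC^{\otimes n}(A)$ into polynomial syzygies, and then invokes a Siegel-type degree estimate to guarantee termination.

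\emph{Step 1 (Reformulation).} Using the $\FF_q[t]$-equivariant isomorphism $\bC^{\otimes n}(\ok) \cong C^{\otimes n}/(\sigma-1)C^{\otimes n}$ from \eqref{E:Isom C otimes n}, each $\bv_j \in \bC^{\otimes n}(A)$ corresponds to a polynomial $f_j \in A[t]$ with $\deg_t f_j < n$, given by the rule \eqref{E:fu}. The relation $\sum_{j=1}^m [a_j]_n(\bv_j) = \mathbf{0}$ then becomes $\sum_j a_j f_j \in (\sigma-1)\ok[t]$, i.e., the existence of $g \in \ok[t]$ with $\sum_j a_j(t) f_j(t) = (t-\theta)^n g^{(-1)}(t) - g(t)$. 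A Galois descent (both sides are $\Gal(\ok/k)$-fixed, and $(\sigma-1)$ is injective on $\ok[t]$) combined with a leading-$t$-coefficient argument forces $g \in A[t]$, and the requirement that $(t-\theta)^n g^{(-1)}$ carry no fractional powers of $\theta$ further forces $g \in A^q[t]$. Writing $g = \tilde g^{(1)}$ with $\tilde g \in A[t]$, the syzygy condition becomes the polynomial identity
\begin{equation*}
\sum_{j=1}^m a_j(t)\, f_j(t) \;=\; (t-\theta)^n \tilde g(t) - \tilde g(t)^{(1)} \quad \text{in } A[t]. \tag{$\ast$}
\end{equation*}

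\emph{Step 2 (Degree bounds and finite-dimensional linear algebra).} Set $D := \max_{j,\ell}\deg_\theta v_{j,\ell}$, where $\bv_j=(v_{j,1},\ldots,v_{j,n})^{\tr}$. If $(a_j,\tilde g)$ satisfies $(\ast)$ with $e := \max_j \deg_t a_j$, then comparing leading $t$-terms on both sides gives $\deg_t \tilde g \leq e-1$, while comparing leading $\theta$-terms --- noting that $(t-\theta)^n \tilde g$ contributes a $\theta^{n+\deg_\theta \tilde g}$ term and $\tilde g^{(1)}$ contributes a $\theta^{q\deg_\theta \tilde g}$ term, arising from distinct $\theta$-monomials of $\tilde g$ and hence not cancelling in general position --- forces $\deg_\theta \tilde g \leq D$. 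Hence, for each fixed $e$, the pair $((a_j),\tilde g)$ varies over a finite-dimensional $\FF_q$-vector space, and $(\ast)$ cuts out an explicit finite system of $\FF_q$-linear equations whose kernel is the space $S_e \subseteq \FF_q[t]^m$ of syzygies of $t$-degree $\leq e$.

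\emph{Step 3 (Termination via Siegel's lemma).} The full syzygy module $S := \bigcup_e S_e$ is a free $\FF_q[t]$-submodule of $\FF_q[t]^m$ of rank $k := m - r_m$, and standard Hilbert-function theory for polynomial modules yields $\dim_{\FF_q} S_{e+1} - \dim_{\FF_q} S_e = k$ once $e$ exceeds the largest $t$-degree in a minimal $\FF_q[t]$-basis of $S$; one then reads off $r_m = m - k$. The substantive task --- and the content of the ``Siegel's lemma'' in the title --- is to produce an effective a priori bound $B = B(n,m,D)$ on this threshold, after which $S_B$ can be computed by Step 2 and $r_m = m - \rk_{\FF_q[t]} S_B$ follows. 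My plan to obtain $B$ is to exploit the rapid $\theta$-growth of the Frobenius twist $\tilde g \mapsto \tilde g^{(1)}$ in $(\ast)$, which multiplies $\theta$-degrees by $q$: as $e$ grows, this twist produces substantially more $\FF_q$-equations than unknowns, and a careful counting should force any putative syzygy of $t$-degree $e+1$ to be an $\FF_q[t]$-combination of ones of degree $\leq e$ once $e \geq B$. Turning this heuristic into a clean quantitative estimate --- without re-invoking transcendence input such as Yu's theorem used in Step~III of the main proof --- is the main obstacle, and I anticipate it requires a direct analysis of the twisted polynomial identity $(\ast)$ in the spirit of the function-field Siegel lemma.
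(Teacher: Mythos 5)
Your Steps 1 and 2 track the paper's Steps I and II closely: both translate the relation $\sum_j [a_j]_n(\bv_j) = \mathbf{0}$ into the triviality of a class in $\Ext^1_\cF(\mathbf{1},C^{\otimes n})$ and hence into a twisted polynomial identity with an auxiliary polynomial, both descend that auxiliary polynomial to $A[t]$ (the paper by citing Chen's argument from \cite{KL15}, you via Galois descent plus a pole-order comparison, which is a valid alternative route), and both then bound its $\theta$-degree. Your stated bound $\deg_\theta \tilde g \le D$ is not quite correct as written: when the leading $\theta$-terms of $(t-\theta)^n\tilde g$ and $\tilde g^{(1)}$ collide, which occurs precisely when $\deg_\theta\tilde g = n/(q-1)$, the ``general position'' claim fails, and one must fold the quantity $n/(q-1)$ into the bound as well --- this is exactly what the paper's choice of $\ell:=\max\{\log_{|\theta|_\infty}D+1,\,\tfrac{nq}{q-1}+1\}$ accomplishes. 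But this is an easy repair.

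The genuine gap is in Step 3, and it is exactly the point where the paper's argument diverges from yours. You reduce, for each fixed $e$, to finite-dimensional $\FF_q$-linear algebra, and you then need an a priori bound $B$ on the $t$-degree threshold beyond which $\dim_{\FF_q}S_e$ stabilizes to an arithmetic progression; you explicitly acknowledge that producing such a $B$ is ``the main obstacle,'' and your sketch of how to obtain it is only a heuristic. The paper sidesteps this problem entirely by a different reduction: once the $\theta$-degree of the auxiliary polynomial $\delta$ is bounded by $\ell$, one writes $\delta=c_1\theta^{\ell-1}+\cdots+c_\ell$ with the $c_i$ treated as unknowns \emph{lying in $\FF_q[t]$} (not as a finite collection of $\FF_q$-scalars), and likewise treats $a_1,\ldots,a_m$ as unknowns in $\FF_q[t]$. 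Comparing coefficients of each power of $\theta$ in the twisted identity then yields a \emph{finite} system of \emph{linear equations over the ring $\FF_q[t]$} in the unknowns $c_1,\ldots,c_\ell,a_1,\ldots,a_m$. Since $\FF_q[t]$ is a Euclidean domain, the solution submodule of $\FF_q[t]^{\ell+m}$ is computable by Gaussian elimination (equivalently, Smith or Hermite normal form), and the rank of its projection to the $a$-coordinates can be read off directly. No separate termination bound on $\deg_t a_j$ is needed --- the PID linear algebra produces it automatically. Without this structural observation your algorithm is not shown to terminate, so the proposal as written does not yet constitute a proof of effectivity.
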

\begin{proof}
We let $\bv_{i}:=(v_{i1},\ldots,v_{in})^{\tr}\in \bC^{\otimes n}(A)$, and let $f_{i}:=v_{i1}(t-\theta)^{n-1}+\cdots+v_{in}\in A[t]$ be its associated polynomial. Note that the class $[M_{i}]\in \Ext_{\cF}^{1}\left({\bf{1}},C^{\otimes n}\right)\cong \bC^{\otimes n}(\ok) $ is mapped to $\bv_{i}$, where $M_{i}\in \cF$ is defined by the matrix
\[ \left(
     \begin{array}{cc}
       (t-\theta)^{n} & 0 \\
       f_{i}^{(-1)}(t-\theta)^{n} & 1 \\
     \end{array}
   \right)
 .\]

Fix $m$ polynomials $a_{1},\ldots,a_{m}\in \FF_{q}[t]$ and put $F:=\sum_{i=1}^{m}a_{i}f_{i}$. Then the class  $[M_{F}]\in \Ext_{\cF}^{1}\left({\bf{1}},C^{\otimes n} \right)$ is mapped to the integral point $\sum_{i=1}^{m}[a_{i}]_{n}\bv_{i}$ (see~\cite[\S\S~2.4]{CPY14}), where $M_{F}\in \cF$ is defined by the matrix
\[ \left(
     \begin{array}{cc}
       (t-\theta)^{n} & 0 \\
       F^{(-1)}(t-\theta)^{n} & 1 \\
     \end{array}
   \right)
   .\] It follows that $\sum_{i=1}^{m}[a_{i}]_{n}\bv_{i}={\bf{0}}$ if and only if $M_{F}$ presents the trivial class in $\Ext_{\cF}^{1}\left({\bf{1}},\bC^{\otimes n} \right)$. Therefore, we have the following equivalence:
\begin{enumerate}
\item[$\bullet$]  $\sum_{i=1}^{m}[a_{i}]_{n}\bv_{i}={\bf{0}}\in \bC^{\otimes n}(A)$.
\item[$\bullet$] there exists a polynomial $\delta\in \ok[t]$ for which
\[ \left(
     \begin{array}{cc}
       1 & 0 \\
       \delta & 1 \\
     \end{array}
   \right)^{(-1)}\left(
                   \begin{array}{cc}
                     (t-\theta)^{n} & 0 \\
                     F^{(-1)}(t-\theta)^{n} & 1 \\
                   \end{array}
                 \right)= \left(
                            \begin{array}{cc}
                              (t-\theta)^{n} & 0 \\
                              0 & 1 \\
                            \end{array}
                          \right) \left(
                                    \begin{array}{cc}
                                      1 & 0 \\
                                      \delta & 1 \\
                                    \end{array}
                                  \right),
   \] which is equivalent to
   \begin{equation}\label{E:deltaF}
   \delta^{(-1)}(t-\theta)^{n}+F^{(-1)}(t-\theta)^{n}=\delta.
   \end{equation}

\end{enumerate}

$\underline{\bf{Step~I}}$. Assume that there exist $a_{1},\ldots,a_{m}\in \FF_{q}[t]$ for which $\sum_{i=1}^{m}[a_{i}]_{n}\bv_{i}={\bf{0}}$, ie., the equation (\ref{E:deltaF}) holds. Then the $\delta$ in (\ref{E:deltaF}) must be in $A[t]$.

$\underline{\hbox{Proof of Step~I}}$. Note that the equation (\ref{E:deltaF}) is equivalent to $\delta (t-\theta^{q})^{n}+F (t-\theta^{q})^{n}=\delta^{(1)}$. Then the result follows from H.-J. Chen's formulation in the proof of \cite[Thm.~2~(a)]{KL15}.

For a polynomial $h=\sum_{i}u_{i}t^{i}\in A[t]$, we define its sup-norm by $\parallel h\parallel:=\max_{i}\left\{|u_{i}|_{\infty} \right\}$. Note that for $h_{1},h_{2}\in A[t]$ we have:
\begin{enumerate}
\item[$\bullet$] $\parallel h_{1}h_{2}\parallel=\parallel h_{1}\parallel \cdot \parallel h_{2}\parallel$.
\item[$\bullet$] $\parallel h_{1}+h_{2}\parallel \leq  \max \left\{\parallel h_{1}\parallel, \parallel h_{2}\parallel\right\} $.
\end{enumerate}
For each $\bv_{i}$, we define $\parallel \bv_{i}\parallel:={\rm{max}}_{1\leq j\leq n}\left\{ |v_{ij}|_{\infty} \right\}$, and note that
\[ \parallel f_{i}\parallel\leq \parallel\bv_{i}\parallel \cdot |\theta|_{\infty}^{n-1}  .\]
Put \[D:={\rm{max}}_{1\leq i \leq m}\left\{ \parallel\bv_{i}\parallel\right\}\cdot |\theta|_{\infty}^{n-1} \]
and so \[\parallel F\parallel\leq D  .\]

$\underline{\bf{Step~II}}$. Let hypotheses be given as in {\bf{Step~I}}. Define
\[ \ell:={\rm{max}}\left\{ \log_{|\theta|_{\infty}}D+1,\frac{nq}{q-1}+1   \right\} .\] Then $\deg_{\theta}\delta< \ell$ when we regard $\delta$ as a polynomial in the variable ${\theta}$ over $\FF_{q}[t]$.

$\underline{\hbox{Proof of Step~II}}$. Suppose on the contrary that $\deg_{\theta}\delta \geq \ell$, ie., $\parallel \delta\parallel\geq |\theta|_{\infty}^{\ell}$. Note that by the definition of $\ell$ we have
\begin{equation}\label{E:equa1}
\parallel F(t-\theta^{q})^{n}\parallel< |\theta|_{\infty}^{\ell}\cdot |\theta|_{\infty}^{qn}\leq \parallel \delta (t-\theta^{q})^{n}\parallel.
\end{equation}
Therefore, \eqref{E:equa1} and the equality
\[ \delta (t-\theta^{q})^{n}+F(t-\theta^{q})^{n}=\delta^{(1)}  \]
imply that
\[ \parallel \delta(t-\theta^{q})^{n} \parallel =\parallel \delta^{(1)}\parallel .\]
In other words, we have that
\[  \deg_{\theta} \delta+ nq= q \deg_{\theta}\delta, \]
whence
\[  \deg_{\theta}\delta=\frac{nq}{q-1}<\ell,   \]
a contradiction.

$\underline{\hbox{\bf{Step~III: End of proof}}}$. Now we write
\begin{equation}\label{E:formdeltaF}
 \delta:=c_{1}\theta^{\ell-1}+\cdots+c_{\ell}\in \FF_{q}[t][\theta] \hbox{ and }F=d_{1}\theta^{\ell-1}+\cdots+d_{\ell}\in \FF_{q}[t][\theta]
 \end{equation}
and note that the coefficients $d_{1},\ldots,d_{\ell}$ are $\FF_{q}[t]$-linear combinations of $a_{1},\ldots,a_{m}$. We recall that solving $a_{1},\ldots,a_{m}$ in the equation
\[\sum_{i=1}^{m} [a_{i}]_{n}(\bv_{i})={\bf{0}} \]
is equivalent to solving for $\delta$ and $a_{1},\ldots,a_{m}$ satisfying
\[ \delta(t-\theta^{q})^{n}+F(t-\theta^{q})^{n} =\delta^{(1)}   .\]
However, putting the forms of $\delta$ and $F$ (\ref{E:formdeltaF}) into the equation above and comparing the coefficients of each $\theta^{i}$ for $i=0,\ldots,\ell-1$ we obtain a system of linear equations in $c_{1},\ldots,c_{\ell},a_{1},\ldots,a_{m}$ over $\FF_{q}[t]$. Using Gauss elimination we can solve for solutions $c_{1},\ldots,c_{\ell},a_{1},\ldots,a_{m}$ effectively, and particularly obtain the rank of the solutions $a_{1},\ldots,a_{m}$, whence establishing the desired result.
\end{proof}

\begin{remark}
We mention that in \cite{De91, De92} Denis studied the question of Siegel's lemma type for $t$-modules. For integral points $\bv_{1},\ldots,\bv_{m}\in \bC^{\otimes n}(A)$, Denis showed that there exists a constant $c$ (depending on $n$ and $\bv_{1},\ldots,\bv_{m}$) so that the degrees of the coefficients of any $\FF_q[t]$-linear relations among $\bv_{1},\ldots,\bv_{m}$ can be bounded by $c$. However, the value of $c$ is not explicit in Denis' results, and  our approach is entirely different from his.
\end{remark}

\subsection{The algorithm}\label{sec:algorithm} For weight $n\geq 2$, we provide the following algorithm to compute the dimension
\[ d_{n}:=\dim_{k} \Sp_{k}\left\{ \tilde{\pi}^{n},\zeta_{A}(1,n-1),\zeta_{A}(2,n-2),\ldots,\zeta_{A}(n-1,1)  \right\}     .\]
Let $\mathcal{V}$ be given in Theorem~\ref{T:MainThmIntroduction}. The algorithm is basically divided into two parts (most of the first part was given in \cite[\S~6.1.1]{CPY14}).

(I) $\underline{\hbox{\bf{Computing the integral points }}\Xi_{\fs}}$: Fix an $\fs=(s_{1},s_{2})\in \mathcal{V}$.
\begin{enumerate}
\item[I-1] Compute the Anderson-Thakur polynomials $H_{s_{1}-1},H_{s_{2}-1}$.

\item[I-2] Let $M_{\fs}'$ be the Frobenius module defined by $\Phi_{\fs}'$ as in  (\ref{E:Phi s'}) with $\ok[t]$-basis $\left\{ m_{1},m_{2} \right\}$. Put $d=(s_{1}+s_{2})+s_{2}$ and let $\left\{\nu_{1},\ldots,\nu_{d} \right\}$ be the $\ok[\sigma]$-basis of $M_{\fs}'$ given by \[(t-\theta)^{s_{1}+s_{2}-1}m_{1},\dots,(t-\theta)m_{1},m_{1},(t-\theta)^{s_{2}-1}m_{2},\ldots,(t-\theta)m_{2},m_{2} .\] Identify $M_{\fs}'/(\sigma-1)M_{\fs}'$ with $\Mat_{d\times 1}(\ok)$ via $\nu_{1},\ldots,\nu_{d}$ in \cite[(5.2.2)]{CPY14}.

\item[I-3] Write down the $t$-action on $M_{\fs}'/(\sigma-1)M_{\fs}'$, and so give a $t$-module structure on $\Mat_{d\times 1}(\ok)$, which we denote by $(E_{\fs}',\rho)$.

\item[I-4] Consider $H_{s_{2}-1}^{(-1)}(t-\theta)^{s_{2}}m_{2}\in M_{\fs}'/(\sigma-1)M_{\fs}'$, which corresponds to an integral point $\bv_{\fs}=(a_{1},\ldots,a_{d})^{\tr}\in E'(A)$ from the decomposition $H_{s_{2}-1}^{(-1)}(t-\theta)^{s_{2}}m_{r}\equiv\sum_{i=1}^{d} a_{i} \nu_{i}$ (mod $\sigma-1$). See \cite[\S\S~5.2]{CPY14}.

\item[I-5] Decompose $s_{2} = p^{\ell} n_1\left(q^h-1\right)$ where $p\nmid n_1$ and $h$ is the greatest integer such that $(q^h-1) \mid s_{2}$. Define the polynomial $\alpha_{\fs}:=(t^{q^{h}}-t)^{p^{\ell}}\in \FF_{q}[t]$, and then compute $\Xi_{\fs}:= \rho_{\alpha_{\fs}}(\bv_{\fs})$, which is identified in $\bC^{\otimes n}(A)\hookrightarrow E_{\fs}'(A)$.
\end{enumerate}

(II) $\underline{\hbox{\bf{Computing the dimension }}d_{n}}$:
\begin{enumerate}
\item[II-1] For each $\fs\in \mathcal{V}$, define $\parallel \Xi_{\fs}\parallel$ to the maximum of the absolute values of components of $\Xi_{\fs}$ and put $D:={\rm{max}}\left\{ \parallel \Xi_{\fs}\parallel; \fs\in \mathcal{V} \right\} \cdot |\theta|_{\infty}^{n-1}$. Compute

\[   \ell:={\rm{max}}\left\{\log_{|\theta|_{\infty}}D +1,\frac{nq}{q-1}+1  \right\}  .\]

\item[II-2] Let $f_{\fs}\in A[t]$ be the polynomial associated to $\Xi_{\fs}$ given in (\ref{E:fu}). Let $\left\{ a_{\fs}\right\}_{\fs\in \mathcal{V}}\subseteq \FF_{q}[t]$ be  {\it{parameters}}, and put $F:=\sum_{\fs\in \mathcal{V}}a_{\fs}f_{\fs}$.

\item[II-3] Let $c_{1},\ldots,c_{\ell}\in \FF_{q}[t]$ be {\it{parameters}} and put $\delta:=c_{1}\theta^{\ell-1}+\cdots+c_{\ell}\in \FF_{q}[t][\theta]$. Write $ F=d_{1}\theta^{\ell-1}+\cdots+d_{\ell}\in \FF_{q}[t][\theta] $
and note that the coefficients $d_{1},\ldots,d_{\ell}$ are $\FF_{q}[t]$-linear combinations of $\left\{a_{\fs} \right\}_{\fs\in\mathcal{V}}$.
\item[II-4] Comparing the coefficients of $\theta^{i}$ from the equation $\delta (t-\theta^{q})^{n}+F(t-\theta^{q})^{n}=\delta^{(1)}$, we obtain a system of linear equations in $c_{1},\ldots,c_{\ell}$ and $\left\{ a_{\fs}\right\}_{\fs\in \mathcal{V}}$ with coefficients in $\FF_{q}[t]$. Using Gaussian elimination we solve for solutions $c_{1},\ldots,c_{\ell}$ and $\left\{ a_{\fs}\right\}_{\fs\in \mathcal{V}}$, and particularly solve for rank $\tilde{r}_{n}$ of the solutions $[a_{\fs}]_{\fs\in \mathcal{V}}$, which is the number of independent $\FF_{q}[t]$-linear relations among $\left\{ \Xi_{\fs} \right\}_{\fs\in \mathcal{V}}$.

\item[II-5] Compute $r_{n}:=\lfloor\frac{n-1}{q-1} \rfloor - \tilde{r}_{n}$, which is the rank of the $\FF_{q}[t]$-module
\[\Sp_{\FF_{q}[t]}\left\{ \Xi_{\fs}\right\}_{\fs\in \mathcal{V}}   .\]
Compute $d_{n}:=n- \lfloor\frac{n-1}{q-1} \rfloor  +r_{n}$, which is the exact dimension we want by Theorem~\ref{T:MainThmIntroduction}.

\end{enumerate}

\subsection{Computational data} In this section, we list some data of implementing the algorithm above using Magma. We thank Yi-Hsuan Lin for providing
 the code. In what follows, \lq\lq Weight\lq\rq~means the weight $n$, and \lq\lq Dimension\rq\rq~means $d_{n}$ above, and \lq\lq $\FF_{p}$-linear\rq\rq~means the number of independent linear relations arising from (\ref{E:Chen}). When the weight $n$ is $A$-odd (ie., $(q-1)\nmid n$) the author does not know whether (\ref{E:Chen}) can produce a linear relation as $\zeta_{A}(r)\zeta_{A}(s)$ is a \lq\lq monomial\rq\rq~for one at least of $r, s$ being $A$-odd. So we let the position of $A$-odd weight be blank. \lq\lq Zeta-like\rq\rq~means the number of weight $n$ double zeta values $\zeta_{A}(\fs)$ for which $\zeta_{A}(\fs)/\zeta_{A}(n)\in k$. We list the computation data below only for $q=2$ and $q=3$, although we have run the program for other $q$ up to $11$ with weight up to $150$.

For $q=2$, we have:
$$
 \begin{tabular}{>{\centering}p{1.9cm}|>{\centering}p{0.4cm}|>{\centering}p{0.4cm}|>{\centering}p{0.4cm}|>{\centering}p{0.4cm}|>{\centering}p{0.4cm}|>{\centering}p{0.4cm}|>{\centering}p{0.4cm}|>{\centering}p{0.4cm}|>{\centering}p{0.4cm}|>{\centering}p{0.4cm}|>{\centering}p{0.4cm}|>{\centering}p{0.4cm}|>{\centering}p{0.4cm}|>{\centering\arraybackslash}p{0.4cm}}
 Weight&2&3&4&5&6&7&8&9&10&11&12&13&14&15\\
 \hline
 Dimension&1&2&2&3&3&3&3&4&4&4&4&5&5&5\\
 \hline
 $\FF_p$-linear&0&1&1&1&2&2&2&3&3&3&4&4&4&5\\
 \hline
 Zeta-like&1&1&1&0&0&2&1&0&0&0&0&0&0&2\\
 \end{tabular}
 $$

 $$
 \begin{tabular}{>{\centering}p{0.5cm}|>{\centering}p{0.5cm}|>{\centering}p{0.5cm}|>{\centering}p{0.5cm}|>{\centering}p{0.5cm}|>{\centering}p{0.5cm}|>{\centering}p{0.5cm}|>{\centering}p{0.5cm}|>{\centering}p{0.5cm}|>{\centering}p{0.5cm}|>{\centering}p{0.5cm}|>{\centering}p{0.5cm}|>{\centering}p{0.5cm}|>{\centering}p{0.5cm}|>{\centering\arraybackslash}p{0.5cm}}
 16&17&18&19&20&21&22&23&24&25&26&27&28&29&30\\
 \hline
 5&6&6&6&6&7&7&7&7&8&8&8&8&8&8\\
 \hline
 5&5&6&6&6&7&7&7&8&8&8&9&9&9&10\\
 \hline
 0&0&0&0&0&0&0&0&0&0&0&0&0&0&0\\
 \end{tabular}
 $$

 $$
 \begin{tabular}{>{\centering}p{0.5cm}|>{\centering}p{0.5cm}|>{\centering}p{0.5cm}|>{\centering}p{0.5cm}|>{\centering}p{0.5cm}|>{\centering}p{0.5cm}|>{\centering}p{0.5cm}|>{\centering}p{0.5cm}|>{\centering}p{0.5cm}|>{\centering}p{0.5cm}|>{\centering}p{0.5cm}|>{\centering}p{0.5cm}|>{\centering}p{0.5cm}|>{\centering}p{0.5cm}|>{\centering\arraybackslash}p{0.5cm}}
 31&32&33&34&35&36&37&38&39&40&41&42&43&44&45\\
 \hline
 8&8&9&9&9&9&10&10&10&10&11&11&11&11&11\\
 \hline
 10&10&11&11&11&12&12&12&13&13&13&14&14&14&15\\
 \hline
 2&0&0&0&0&0&0&0&0&0&0&0&0&0&0\\
 \end{tabular}
 $$

 $$
 \begin{tabular}{>{\centering}p{0.5cm}|>{\centering}p{0.5cm}|>{\centering}p{0.5cm}|>{\centering}p{0.5cm}|>{\centering}p{0.5cm}|>{\centering}p{0.5cm}|>{\centering}p{0.5cm}|>{\centering}p{0.5cm}|>{\centering}p{0.5cm}|>{\centering}p{0.5cm}|>{\centering}p{0.5cm}|>{\centering}p{0.5cm}|>{\centering}p{0.5cm}|>{\centering}p{0.5cm}|>{\centering\arraybackslash}p{0.5cm}}
 46&47&48&49&50&51&52&53&54&55&56&57&58&59&60\\
 \hline
 11&11&11&12&12&12&12&12&12&12&12&13&13&13&13\\
 \hline
 15&15&16&16&16&17&17&17&18&18&18&19&19&19&20\\
 \hline
 0&0&0&0&0&0&0&0&0&0&0&0&0&0&0\\
 \end{tabular}
 $$

 $$
 \begin{tabular}{>{\centering}p{0.5cm}|>{\centering}p{0.5cm}|>{\centering}p{0.5cm}|>{\centering}p{0.5cm}|>{\centering}p{0.5cm}|>{\centering}p{0.5cm}|>{\centering}p{0.5cm}|>{\centering}p{0.5cm}|>{\centering}p{0.5cm}|>{\centering}p{0.5cm}|>{\centering}p{0.5cm}|>{\centering}p{0.5cm}|>{\centering}p{0.5cm}|>{\centering}p{0.5cm}|>{\centering\arraybackslash}p{0.5cm}}
 61&62&63&64&65&66&67&68&69&70&71&72&73&74&75\\
 \hline
 13&13&13&13&14&14&14&14&15&15&15&15&16&16&16\\
 \hline
 20&20&21&21&21&22&22&22&23&23&23&24&24&24&25\\
 \hline
 0&0&2&0&0&0&0&0&0&0&0&0&0&0&0\\
 \end{tabular}
 $$

 $$
 \begin{tabular}{>{\centering}p{0.5cm}|>{\centering}p{0.5cm}|>{\centering}p{0.5cm}|>{\centering}p{0.5cm}|>{\centering}p{0.5cm}|>{\centering}p{0.5cm}|>{\centering}p{0.5cm}|>{\centering}p{0.5cm}|>{\centering}p{0.5cm}|>{\centering}p{0.5cm}|>{\centering}p{0.5cm}|>{\centering}p{0.5cm}|>{\centering}p{0.5cm}|>{\centering}p{0.5cm}|>{\centering\arraybackslash}p{0.5cm}}
 76&77&78&79&80&81&82&83&84&85&86&87&88&89&90\\
 \hline
 16&16&16&16&16&17&17&17&17&17&17&17&17&18&18\\
 \hline
 25&25&26&26&26&27&27&27&28&28&28&29&29&29&30\\
 \hline
 0&0&0&0&0&0&0&0&0&0&0&0&0&0&0\\
 \end{tabular}
 $$

 $$
 \begin{tabular}{>{\centering}p{0.5cm}|>{\centering}p{0.5cm}|>{\centering}p{0.5cm}|>{\centering}p{0.5cm}|>{\centering}p{0.5cm}|>{\centering}p{0.5cm}|>{\centering}p{0.5cm}|>{\centering}p{0.5cm}|>{\centering}p{0.5cm}|>{\centering}p{0.5cm}|>{\centering}p{0.5cm}|>{\centering}p{0.5cm}|>{\centering}p{0.5cm}|>{\centering}p{0.5cm}|>{\centering\arraybackslash}p{0.5cm}}
 91&92&93&94&95&96&97&98&99&100&101&102&103&104&105\\
 \hline
 18&18&18&18&18&18&19&19&19&19&19&19&19&19&20\\
 \hline
 30&30&31&31&31&32&32&32&33&33&33&34&34&34&35\\
 \hline
 0&0&0&0&0&0&0&0&0&0&0&0&0&0&0\\
 \end{tabular}
 $$

 $$
 \begin{tabular}{>{\centering}p{0.5cm}|>{\centering}p{0.5cm}|>{\centering}p{0.5cm}|>{\centering}p{0.5cm}|>{\centering}p{0.5cm}|>{\centering}p{0.5cm}|>{\centering}p{0.5cm}|>{\centering}p{0.5cm}|>{\centering}p{0.5cm}|>{\centering}p{0.5cm}|>{\centering}p{0.5cm}|>{\centering}p{0.5cm}|>{\centering}p{0.5cm}|>{\centering}p{0.5cm}|>{\centering\arraybackslash}p{0.5cm}}
 106&107&108&109&110&111&112&113&114&115&116&117&118&119&120\\
 \hline
 20&20&20&20&20&20&20&21&21&21&21&21&21&21&21\\
 \hline
 35&35&36&36&36&37&37&37&38&38&38&39&39&39&40\\
 \hline
 0&0&0&0&0&0&0&0&0&0&0&0&0&0&0\\
 \end{tabular}
 $$

For $q=3$, we have:
$$
\begin{tabular}{>{\centering}p{1.9cm}|>{\centering}p{0.4cm}|>{\centering}p{0.4cm}|>{\centering}p{0.4cm}|>{\centering}p{0.4cm}|>{\centering}p{0.4cm}|>{\centering}p{0.4cm}|>{\centering}p{0.4cm}|>{\centering}p{0.4cm}|>{\centering}p{0.4cm}|>{\centering}p{0.4cm}|>{\centering}p{0.4cm}|>{\centering}p{0.4cm}|>{\centering}p{0.4cm}|>{\centering\arraybackslash}p{0.4cm}}
Weight&3&4&5&6&7&8&9&10&11&12&13&14&15&16\\
\hline
Dimension&3&4&5&5&7&7&8&9&10&10&12&12&13&13\\
\hline
$\FF_p$-linear& &0& &0& &1& &1& &1& &1& &2\\
\hline
Zeta-like&1&0&1&1&1&1&1&0&0&0&0&0&0&0\\
\end{tabular}
$$

$$
\begin{tabular}{>{\centering}p{0.5cm}|>{\centering}p{0.5cm}|>{\centering}p{0.5cm}|>{\centering}p{0.5cm}|>{\centering}p{0.5cm}|>{\centering}p{0.5cm}|>{\centering}p{0.5cm}|>{\centering}p{0.5cm}|>{\centering}p{0.5cm}|>{\centering}p{0.5cm}|>{\centering}p{0.5cm}|>{\centering}p{0.5cm}|>{\centering}p{0.5cm}|>{\centering}p{0.5cm}|>{\centering\arraybackslash}p{0.5cm}}
17&18&19&20&21&22&23&24&25&26&27&28&29&30&31\\
\hline
14&14&16&16&17&17&18&18&19&19&20&21&22&22&24\\
\hline
&2& &2& &2& &3& &3& &3& &3& \\
\hline
2&0&0&0&0&0&2&0&3&2&2&0&0&0&0\\
\end{tabular}
$$

$$
\begin{tabular}{>{\centering}p{0.5cm}|>{\centering}p{0.5cm}|>{\centering}p{0.5cm}|>{\centering}p{0.5cm}|>{\centering}p{0.5cm}|>{\centering}p{0.5cm}|>{\centering}p{0.5cm}|>{\centering}p{0.5cm}|>{\centering}p{0.5cm}|>{\centering}p{0.5cm}|>{\centering}p{0.5cm}|>{\centering}p{0.5cm}|>{\centering}p{0.5cm}|>{\centering}p{0.5cm}|>{\centering\arraybackslash}p{0.5cm}}
32&33&34&35&36&37&38&39&40&41&42&43&44&45&46\\
\hline
24&25&25&26&26&28&28&29&29&30&30&31&31&32&33\\
\hline
4& &4& &4& &4& &5& &5& &5& &5\\
\hline
0&0&0&0&0&0&0&0&0&0&0&0&0&0&0\\
\end{tabular}
$$

$$
\begin{tabular}{>{\centering}p{0.5cm}|>{\centering}p{0.5cm}|>{\centering}p{0.5cm}|>{\centering}p{0.5cm}|>{\centering}p{0.5cm}|>{\centering}p{0.5cm}|>{\centering}p{0.5cm}|>{\centering}p{0.5cm}|>{\centering}p{0.5cm}|>{\centering}p{0.5cm}|>{\centering}p{0.5cm}|>{\centering}p{0.5cm}|>{\centering}p{0.5cm}|>{\centering}p{0.5cm}|>{\centering\arraybackslash}p{0.5cm}}
47&48&49&50&51&52&53&54&55&56&57&58&59&60&61\\
\hline
34&34&35&35&36&36&37&37&39&39&40&40&41&41&42\\
\hline
&6& &6& &6& &6& &7& &7& &7& \\
\hline
0&0&0&0&0&0&2&0&0&0&0&0&0&0&0\\
\end{tabular}
$$

$$
\begin{tabular}{>{\centering}p{0.5cm}|>{\centering}p{0.5cm}|>{\centering}p{0.5cm}|>{\centering}p{0.5cm}|>{\centering}p{0.5cm}|>{\centering}p{0.5cm}|>{\centering}p{0.5cm}|>{\centering}p{0.5cm}|>{\centering}p{0.5cm}|>{\centering}p{0.5cm}|>{\centering}p{0.5cm}|>{\centering}p{0.5cm}|>{\centering}p{0.5cm}|>{\centering}p{0.5cm}|>{\centering\arraybackslash}p{0.5cm}}
62&63&64&65&66&67&68&69&70&71&72&73&74&75&76\\
\hline
42&43&44&45&45&46&46&47&47&48&48&50&50&51&51\\
\hline
7& &8& &8& &8& &8& &9& &9& &9\\
\hline
0&0&0&0&0&0&0&0&0&2&0&0&0&0&0\\
\end{tabular}
$$

$$
\begin{tabular}{>{\centering}p{0.5cm}|>{\centering}p{0.5cm}|>{\centering}p{0.5cm}|>{\centering}p{0.5cm}|>{\centering}p{0.5cm}|>{\centering}p{0.5cm}|>{\centering}p{0.5cm}|>{\centering}p{0.5cm}|>{\centering}p{0.5cm}|>{\centering}p{0.5cm}|>{\centering}p{0.5cm}|>{\centering}p{0.5cm}|>{\centering}p{0.5cm}|>{\centering}p{0.5cm}|>{\centering\arraybackslash}p{0.5cm}}
77&78&79&80&81&82&83&84&85&86&87&88&89&90&91\\
\hline
52&52&53&53&54&55&56&56&58&58&59&59&60&60&62\\
\hline
&9& &10& &10& &10& &10& &11& &11& \\
\hline
4&0&3&2&0&0&0&0&0&0&0&0&0&0&0\\
\end{tabular}
$$

$$
\begin{tabular}{>{\centering}p{0.5cm}|>{\centering}p{0.5cm}|>{\centering}p{0.5cm}|>{\centering}p{0.5cm}|>{\centering}p{0.5cm}|>{\centering}p{0.5cm}|>{\centering}p{0.5cm}|>{\centering}p{0.5cm}|>{\centering}p{0.5cm}|>{\centering}p{0.5cm}|>{\centering}p{0.5cm}|>{\centering}p{0.5cm}|>{\centering}p{0.5cm}|>{\centering}p{0.5cm}|>{\centering\arraybackslash}p{0.5cm}}
92&93&94&95&96&97&98&99&100&101&102&103&104&105&106\\
\hline
62&63&63&64&64&65&65&66&67&68&68&69&69&70&70\\
\hline
11& &11& &12& &12& &12& &12& &13& &13\\
\hline
0&0&0&0&0&0&0&0&0&0&0&0&0&0&0\\
\end{tabular}
$$

$$
\begin{tabular}{>{\centering}p{0.5cm}|>{\centering}p{0.5cm}|>{\centering}p{0.5cm}|>{\centering}p{0.5cm}|>{\centering}p{0.5cm}|>{\centering}p{0.5cm}|>{\centering}p{0.5cm}|>{\centering}p{0.5cm}|>{\centering}p{0.5cm}|>{\centering}p{0.5cm}|>{\centering}p{0.5cm}|>{\centering}p{0.5cm}|>{\centering}p{0.5cm}|>{\centering}p{0.5cm}|>{\centering\arraybackslash}p{0.5cm}}
107&108&109&110&111&112&113&114&115&116&117&118&119&120&121\\
\hline
71&71&73&73&74&74&75&75&76&76&77&78&79&79&80\\
\hline
&13& &13& &14& &14& &14& &14& &15& \\
\hline
0&0&0&0&0&0&0&0&0&0&0&0&0&0&0\\
\end{tabular}
$$

$$
\begin{tabular}{>{\centering}p{0.5cm}|>{\centering}p{0.5cm}|>{\centering}p{0.5cm}|>{\centering}p{0.5cm}|>{\centering}p{0.5cm}|>{\centering}p{0.5cm}|>{\centering}p{0.5cm}|>{\centering}p{0.5cm}|>{\centering}p{0.5cm}|>{\centering}p{0.5cm}|>{\centering}p{0.5cm}|>{\centering}p{0.5cm}|>{\centering}p{0.5cm}|>{\centering}p{0.5cm}|>{\centering\arraybackslash}p{0.5cm}}
122&123&124&125&126&127&128&129&130&131&132&133&134&135&136\\
\hline
80&81&81&82&82&84&84&85&85&86&86&87&87&88&89\\
\hline
15& &15& &15& &16& &16& &16& &16& &17\\
\hline
0&0&0&0&0&0&0&0&0&0&0&0&0&0&0\\
\end{tabular}
$$

$$
\begin{tabular}{>{\centering}p{0.5cm}|>{\centering}p{0.5cm}|>{\centering}p{0.5cm}|>{\centering}p{0.5cm}|>{\centering}p{0.5cm}|>{\centering}p{0.5cm}|>{\centering}p{0.5cm}|>{\centering}p{0.5cm}|>{\centering}p{0.5cm}|>{\centering}p{0.5cm}|>{\centering}p{0.5cm}|>{\centering}p{0.5cm}|>{\centering}p{0.5cm}|>{\centering\arraybackslash}p{0.5cm}}
137&138&139&140&141&142&143&144&145&146&147&148&149&150\\
\hline
90&90&91&91&92&92&93&93&95&95&96&96&97&97\\
\hline
&17& &17& &17& &18& &18& &18& &18\\
\hline
0&0&0&0&0&0&0&0&0&0&0&0&0&0\\
\end{tabular}
$$

\bibliographystyle{alpha}

\begin{thebibliography}{999999}




\bibitem[A86]{A86}
G.\ W.\ Anderson, \textit{$t$-motives}, Duke Math. J. \textbf{53} (1986), no. 2, 457--502.

\bibitem[ABP04]{ABP04}
G. W. Anderson, W. D. Brownawell and M. A. Papanikolas, \textit{Determination of the algebraic relations among special $\Gamma$-values in positive characteristic}, Ann. of Math. (2) \textbf{160} (2004), no. 1, 237--313.


\bibitem[AT90]{AT90}
G.\ W.\ Anderson and D.\ S.\ Thakur, \textit{Tensor powers of the Carlitz module and zeta values}, Ann. of Math. (2) \textbf{132} (1990), no.~1, 159--191.


 \bibitem[AT09]{AT09}
G.\ W. Anderson and D.\ S. Thakur, \textit{Multizeta values for $\FF_{q}[t]$, their period interpretation, and relations between them},  Int. Math. Res. Not. IMRN (2009), no. 11, 2038--2055.

\bibitem[Andr\'e04]{Andre04} Y.\ Andr\'e, \textit{ Une introduction aux motifs (motifs purs, motifs mixtes, p\'eriodes)}, Panoramas et Synth\'eses, \textbf{17}. Soci\'et\'e Math\'ematique de France, Paris, 2004.


\bibitem[B12]{B12}
F.\ Brown, \textit{Mixed Tate motives over $\ZZ$}, Ann. of Math. (2) \textbf{175} (2012), no. 2, 949--976.


\bibitem[BP]{BP}
W.\ D.\ Brownawell and M.\ A.\ Papanikolas, \textit{A rapid introduction to Drinfeld modules, $t$-modules, and $t$-motives}, available at http://www.math.tamu.edu/$\sim$map/BanffSurvey.pdf, 2011.

\bibitem [Ca35]{Ca35}
L.\ Carlitz, \textit{On certain functions connected with polynomials in a Galois field}, Duke Math. J. \textbf{1}
  (1935), no. 2, 137-168.

\bibitem[Cart02]{Cart02} P.\ Cartier, \textit{Fonctions polylogarithmes, nombres polyz\^etas et groupes pro-unipotents}, S\'eminaire Bourbaki, Vol. 2000/2001. Ast\'erisque No. 282 (2002), Exp. No. 885, viii, 137-173.

\bibitem[C14]{C14}
C.-Y. Chang, \textit{Linear independence of monomials of multizeta values in positive characteristic}, Compos. Math. \textbf{150} (2014), no.~11, 1789-1808.


\bibitem[CP12]{CP12}
C.-Y.\ Chang and  M.\ A.\ Papanikolas, \textit{Algebraic independence of periods and logarithms of Drinfeld modules. With an appendix by Brian Conrad.} J. Amer. Math. Soc. \textbf{25} (2012), no.~1, 123--150.


\bibitem[CPY14]{CPY14}
C.-Y.\ Chang,  M.\ A.\ Papanikolas and J.\ Yu, \textit{An effective criterion for Eulerian multizeta values in positive characteristic}, arXiv:1411.0124.


\bibitem[CY07]{CY07}
C.-Y.\ Chang and J.\ Yu, \textit{Determination of algebraic relations among special zeta values in positive characteristic}, Adv. Math. \textbf{216} (2007), no.~1, 321-345.

\bibitem[Chen15]{Chen15}
H.-J.\ Chen, \textit{On shuffle of double zeta values for $\FF{q}[t]$}, J. Number Theory, \textbf{148} (2015), 153-163.

\bibitem[Chen16]{Chen16}
H.-J.\ Chen, \textit{On shuffle of double Eisenstein series in positive characteristic}, in preparation.

\bibitem[DG05]{DG05}
P.\ Deligne, A.\ B.\ Goncharov, \textit{Groupes fondamentaux motiviques de Tate mixte},  Ann. Sci. \'Ecole Norm. Sup. (4) \textbf{38} (2005), no. 1, 1-56.

\bibitem[De91]{De91} L.\ Denis, \textit{G\'eom\'etrie diophantienne sur les modules de Drinfel'd}, The arithmetic of function fields (Columbus, OH, 1991), 285-302, Ohio State Univ. Math. Res. Inst. Publ., 2, de Gruyter, Berlin, 1992.

\bibitem[De92]{De92} L.\ Denis, \textit{Hauteurs canoniques et modules de Drinfeld}, Math. Ann. {\textbf{294}}, 213-223 (1992).


\bibitem[GKZ06]{GKZ06}
H.\ Gangl, M.\ Kazeko and D.\ Zagier, \textit{Double zeta values and modular forms}, Automorphic forms and zeta functions,  71-106, World Sci. Publ., Hackensack, NJ, 2006.


\bibitem[Ge88]{Ge88}
E.\-U.\ Gekeler, \textit {On the coefficients of Drinfeld modular forms},  Invent. Math. \textbf{93} (1988), no. 3, 667-700.


\bibitem[Gon97]{G97} A.\ B.\ Goncharov, {\it{The double logarithm and Manin's complex for modular curves}}, Math.
Res. Lett., {\textbf{4}} (1997), 617-636.

\bibitem[Go80]{Go80}
D.\ Goss, \textit{Modular forms for $\FF_{r}[T]$}, J. Reine Angew. Math. \textbf{317} (1980), 16-39.


\bibitem[Go96] {Go96} D.\ Goss, {\it{ Basic structures of function field
    arithmetic}}, Springer-Verlag, Berlin, 1996.


\bibitem[HJ16]{HJ16}
U.\ Hartl and A.-K.\ Juschka, \textit{Pink's theory of Hodge structures over function fields}, available at www.math.uni-muenster.de/u/urs.hartl/Publikat/Hodge2.pdf, 2016.



\bibitem[HP04]{HP04}
U.\ Hartl and R.\ Pink, \textit{Vector bundles with a Frobenius structure on the punctured unit disc}, Compos. Math. \textbf{140} (2004), no.~3, 689--716.


\bibitem[IKZ06]{IKZ06}
L.\ Ihara, M.\ Kaneko and D.\ Zagier, \textit {Derivation and double shuffle relations for multiple zeta values}, Compos. Math.\textbf{142} (2006), no. 2, 307-338.

\bibitem[KL15]{KL15}
Y.-L.\ Kuan and Y.-H.\ Lin, \textit{Criterion for deciding zeta-like multizeta values in positive characteristic}, Exp. Math. \textbf{25} (2016), no.~3, 246--256.


\bibitem[LRT14]{LRT14}
J.\ A.\ Lara Rodr\'{i}guez and D.\ S.\ Thakur, \textit{Zeta-like multizeta values for $\mathbb{F}_{q}[t]$}, Indian J. Pure Appl. Math. 45 (5), 785-798 (2014).


\bibitem[M14]{M14}
Y.\ Mishiba, \textit{On algebraic independence of certain multizeta values in characteristic $p$}, arXiv:1401.3628.


\bibitem[P08]{P08}
M. A. Papanikolas, \textit{Tannakian duality for Anderson-Drinfeld motives and algebraic independence
    of Carlitz logarithms}, Invent. Math. \textbf{171} (2008), no.~1, 123--174.

\bibitem[P14]{P14}
M. A. Papanikolas, \textit{Log-algebraicity on tensor powers of the Carlitz module and special values of Goss L-function}, in preparation.


\bibitem[PR03]{PR03}
M. A. Papanikolas and N. Ramachandran, \textit{A Weil-Barsotti formula for Drinfeld modules}, J. Number Theory \textbf{98} (2003), no.~2, 407--431.


\bibitem[S97]{S97}
S. K. Sinha, \textit{Periods of $t$-motives and transcendence}, Duke Math. J. \textbf{88} (1997), no.~3, 465--535.


\bibitem[Ta10]{Ta10}
L. Taelman, \textit{$1$-$t$-motifs}, arXiv:0908.1503, 2010.


\bibitem[Te02]{Te02}
T.\ Terasoma, \textit{Mixed Tate motives and multiple zeta values}, Invent. Math. \textbf{149} (2002), no. 2, 339-369.




\bibitem[T04]{T04}
D.\ S.\ Thakur, \textit{Function field arithmetic}, World Scientific Publishing, River Edge NJ, 2004.


\bibitem [T09a]{T09a} D.\ S.\ Thakur, \textit{ Power sums with applications to multizeta and zeta zero distribution for $\FF_{q}[t]$}, Finite Fields Appl. \textbf{15} (2009), no. 4, 534-552.


\bibitem[T09b]{T09b}
D.\ S.\ Thakur, \textit{Relations between multizeta values for $\FF_{q}[T]$}, Int. Math. Res. Notices IMRN (2009), no.~12, 2318--2346


\bibitem[T10]{T10} D.\ S.\ Thakur, \textit{Shuffle relations for function field multizeta values}, Int. Math. Res. Not. IMRN (2010), no. 11, 1973-1980.

\bibitem[To15]{To15} G.\ Todd, \textit{Linear relations between multizeta values}, Ph.D. Thesis, University of Arizona (2015).


\bibitem [Yu91]{Yu91}
J.\ Yu, \textit{Transcendence and special zeta values in characteristic $p$}, Ann. of Math. (2) \textbf{134} (1991), no.~1, 1--23.

\bibitem[Z93]{Z93}
D.\ Zagier, \textit{Periods of modular forms, traces of Hecke operators, and multiple zeta values}, Research into automorphic forms and L functions (Japanese) (Kyoto, 1992). Sūrikaisekikenkyūsho Kōkyūroku  No. 843  (1993), 162–170.

\bibitem[Z94]{Z94}
D.\ Zagier, \textit{Values of zeta functions and their applications}, First European Congress of Mathematics, Vol. II (Paris, 1992),  Progress in Math. 120, Birkhäuser-Verlag, Basel, (1994) 497-512.

\bibitem[Zh16]{Zh16}
J.\ Zhao,  \textit{Multiple zeta functions, multiple polylogarithms and their special values}, Series on Number Theory and its Applications, 12. World Scientific Publishing Co. Pte. Ltd., Hackensack, NJ, 2016.


\end{thebibliography}

\end{document}